\newcommand{\R}{\mathbb{R}}
\newcommand{\C}{\mathbb{C}}
\newcommand\Z{\mathbb{Z}}
\newcommand{\N}{\mathbb{N}}
\newcommand{\Q}{\mathbb{Q}}
\renewcommand{\H}{\mathcal{H}}
\newcommand{\T}{\mathrm{T}}
\newcommand{\SL}{{\rm SL}}
\newcommand{\PSL}{\mathrm{PSL}}
\newcommand{\GL}{{\rm GL}}
\newcommand{\Aa}{\mathrm{Area}}
\newcommand{\F}{\mathbf{F}}
\newcommand{\ol}{\overline}
\newcommand{\ul}{\underline}
\newcommand{\ra}{\rightarrow}
\newcommand{\id}{\mathrm{id}}
\newcommand{\Id}{\mathrm{Id}}
\newcommand{\vide}{\varnothing}
\newcommand{\inter}{\mathrm{int}}
\newcommand{\Sig}{\Sigma}
\newcommand{\g}{\gamma}
\newcommand{\G}{\Gamma}
\newcommand{\dist}{\mathbf{d}}
\newcommand{\setm}{\setminus}
\newcommand{\leng}{\mathrm{leng}}
\newcommand{\Curv}{\mathrm{Curv}}
\newcommand{\ACurv}{\mathrm{ACurv}}
\newcommand{\diam}{\mathrm{diam}}
\newcommand{\Cc}{\mathcal{C}}
\newcommand{\CC}{\mathbf{C}}
\newcommand{\DD}{\mathbf{D}}
\newcommand{\Fc}{\mathcal F}
\newcommand{\FF}{\mathbf{F}}
\newcommand{\GG}{\mathcal{G}}
\newcommand{\Hh}{\mathbb{H}}
\newcommand{\Ic}{\mathcal{I}}
\newcommand{\Jb}{\mathbf{J}}
\newcommand{\Lc}{\mathcal{L}}
\newcommand{\LL}{\mathbf{L}}
\newcommand{\NN}{\mathbf{N}}
\newcommand{\Nc}{\mathcal{N}}
\newcommand{\Pb}{\mathbb{P}}
\newcommand{\RP}{\mathbb{RP}}
\newcommand{\Rec}{\mathrm{R}}
\newcommand{\Sb}{\mathbf{S}}
\newcommand{\Vc}{\mathcal{V}}
\newcommand{\Wc}{\mathcal{W}}
\newcommand{\Sys}{\varrho}
\newcommand{\ii}{\mathbf{i}}
\newcommand{\Aff}{\mathrm{Aff}}
\newcommand{\Tria}{\mathbb{T}}
\newcommand{\Stab}{\mathrm{Stab}}
\newcommand{\dd}{\mathrm{d}}
\renewcommand{\gg}{\mathbf{g}}
\newtheorem{Theorem}{Theorem}[section]
\newtheorem{Corollary}[Theorem]{Corollary}
\newtheorem{Lemma}[Theorem]{Lemma}
\newtheorem{Proposition}[Theorem]{Proposition}
\newtheorem{Remark}[Theorem]{Remark}
\newtheorem{Definition}[Theorem]{Definition}
\title[Tessellation]{Topological Veech dichotomy and tessellations of the hyperbolic plane}
\author{Duc-Manh Nguyen}
\thanks{The author thanks the VIASM Hanoi for its hospitality during the preparation of this article.}
\address{IMB Bordeaux, CNRS UMR 5251\newline
Universit\'e de Bordeaux \newline
351, Cours de la Lib\'eration \newline
33405 Talence \newline
FRANCE}
\email{duc-manh.nguyen@math.u-bordeaux.fr}
\date{\today}
\begin{document}
\maketitle

\begin{center}
{\it To the memory of William A. Veech}
\end{center}

\begin{abstract}
For {\em every } half-translation surface with marked points $(M,\Sig)$, we construct an associated tessellation $\Pi(M,\Sig)$ of the Poincar\'e upper half plane whose tiles have finitely many sides and area at most $\pi$. 
The tessellation $\Pi(M,\Sig)$ is equivariant with respect to the action of $\PSL(2,\R)$, and invariant with respect to (half-)translation covering. 
In the case $(M,\Sig)$ is the torus $\C/\Z^2$ with a one marked point, $\Pi(\C/\Z^2,\{0\})$ coincides with the {\em iso-Delaunay tessellation} introduced by Veech~\cite{Vee11} (see also \cite{Bow:thesis, Bow10}) as both tessellations give the Farey tessellation.
As application,  we obtain a bound on the volume of the corresponding Teichm\"uller curve in the case $(M,\Sig)$ is  a Veech surface (lattice surface).
Under the assumption that $(M,\Sig)$ satisfies the topological Veech dichotomy, there is a natural graph $\GG$  underlying $\Pi(M,\Sig)$ on which the Veech group $\G$ acts by automorphisms. We show  that $\GG$ has infinite diameter and is Gromov hyperbolic.
Furthermore, the quotient $\ol{\GG}:=\GG/\G$ is a finite graph if and only if $(M,\Sig)$ is actually a Veech surface, 
in which case we provide an algorithm to determine the graph $\ol{\GG}$ explicitly. 
This algorithm also allows one to get a generating family  and a ``coarse" fundamental domain of the Veech group $\G$.
\end{abstract}

\section{Introduction}

\subsection{Embedded triangles and tessellation of the Poincar\'e upper half plane}
 {\em Half-translation surfaces} are flat surfaces defined by meromorphic quadratic differentials with at most simple poles on compact Riemann surfaces.
If the quadratic differential is the square of an Abelian differential (holomorphic one-form) then we have a {\em translation surface}.
Otherwise, there is a canonical (ramified) double covering of the Riemann surface such that the pullback of this quadratic differential is the square of a holomorphic $1$-form, we will call this the orienting double cover. For a thorough introduction to the subject we refer to \cite{MaTa02, Zorich:survey, Lan04}.

Let   $M$ be  a half-translation surface defined by a quadratic differential $(X,q)$.
Let $\Sig$ be a finite subset of $M$ that contains all the conical singularities of the flat metric.
We will call the pair $(M,\Sig)$ a half-translation surface with marked points.
By a slight abuse of notation, we will call $\Sig$ the set of singularities of $M$.

\begin{Definition}\label{def:emb:tri}
An {\em embedded triangle} of $M$ with vertices in $\Sig$, or an embedded triangle in $(M,\Sig)$ for short, is the image of a map $\varphi: \T \ra M$, where $\T$ is a triangle in the plane $\R^2$,  such that
\begin{itemize}
\item[(i)] $\varphi$ maps the vertices $\T^{(0)}$ of $\T$ to $\Sig$,

\item[(ii)] the restriction of $\varphi$   to $\T\setm\T^{(0)}$ is an embedding with image in $M\setm\Sig$, and

\item[(iii)] $\varphi^*q=dz^2$.
\end{itemize}
We denote the set of embedded triangles in $M$ with vertices in $\Sig$ by $\Tria(M,\Sig)$.
\end{Definition}

\begin{Remark}\label{rk:def:emb:tri}
Our definition is slightly different from the definition in \cite{SW:Veech} in that we do not allow a point in the interior of a side of $\T$ to get mapped to a point in $\Sig$.
\end{Remark}
In what follows, we will  sometimes use the same notation for a triangle in $\R^2$ and its image by  a map $\varphi$ as above.

Consider now the canonical orienting cover $\pi: \hat{M} \ra M$, where $\hat{M}$ is a translation surface defined by a holomorphic $1$-form $\hat{\omega}$.
By convention, if $M$ is itself a translation surface then we take $\hat{M}=M$, and $\pi= \id$.
Let $\hat{\Sig}=\pi^{-1}(\Sig)$.
If $\pi$ is a double cover, then the pre-image of a saddle connection $a$ in $(M,\Sig)$ consists of two geodesic segments in $\hat{M}$ with endpoints in $\hat{\Sig}$.
For any directed arc on $\hat{M}$ with endpoints in $\hat{\Sig}$, the integral of $\hat{\omega}$ along this arc is called its {\em period}.
We will call the period of either segment in the pre-image of $a$ its {\em period}.
This is a complex number determined up to sign.
If $\pm(a_x+\imath a_y), \; a_x,a_y \in  \R$, is the period of $a$, we define the slope of $a$ to be
$$
k_a:=\frac{a_x}{a_y} \in \R \cup \{\infty\}.
$$
Let $\Hh$ denote the Poincar\'e upper half plane. Given an embedded triangle $\T$ in  $\Tria(M,\Sig)$, let $k_1, k_2,k_3 \in \R\cup\{\infty\}$ be the slopes of the sides of $\T$. We denote by $\Delta_\T$ the hyperbolic ideal triangle in $\Hh$ whose vertices are $\{k_1,k_2,k_3\}$. Denote by $\Ic(M,\Sig)$ the set of all the ideal triangles arising from elements of $\Tria(M,\Sig)$.
Let $\Cc(M,\Sig)$ denote the set of points $\R\cup\{\infty\}$ that are vertices of ideals triangles in $\Ic(M,\Sig)$, and  $\Lc(M,\Sig)$ the sets of hyperbolic geodesics that are sides of elements of $\Ic(M,\Sig)$.

Recall that a {\em tessellation} of the upper half plane is a family of convex hyperbolic polygon of finite area (but not necessarily compact) that cover $\Hh$ such that two polygons in this family intersect in either a common vertex, or a common side. Elements of this family are called {\em tiles} of the tessellation. Our first result is the following

\begin{Theorem}\label{thm:ideal:tri:tess}
 For any half-translation surface with marked points $(M,\Sig)$, the geodesics in $\Lc(M,\Sig)$ define a tessellation $\Pi(M,\Sig)$ of $\Hh$, each tile of $\Pi(M,\Sig)$ has finitely many sides and area at most $\pi$.  The tessellation $\Pi(M,\Sig)$ is invariant with respect to  half-translation coverings, that is,  if $(M',\Sig')$ is a half-translation covering of $(M,\Sig)$, then $\Pi(M',\Sig')=\Pi(M,\Sig)$.
\end{Theorem}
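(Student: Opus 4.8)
The plan is to analyze the collection $\Lc(M,\Sig)$ of hyperbolic geodesics coming from slopes of sides of embedded triangles, and to show it cuts $\Hh$ into convex polygonal tiles with the claimed properties. First I would fix a point $\theta_0 \in \Hh$ not lying on any geodesic of $\Lc(M,\Sig)$ (such points exist because, by the topological Veech dichotomy applied to the periodic directions, $\Cc(M,\Sig)$ is a countable subset of $\partial\Hh$ and hence $\Lc(M,\Sig)$ is a countable family of geodesics, so its union has empty interior). The key geometric input is that every direction $\theta \in \RP^1$ in which there is a saddle connection is a \emph{periodic} direction, so $M$ decomposes as a finite union of cylinders in that direction; conversely, in a periodic direction the boundary saddle connections of the cylinders form embedded triangles with a neighbouring transverse saddle connection. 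Concretely: given a periodic direction with slope $k$, pick any cylinder $C$ in that direction; a cross-cut of $C$ joining the two boundary components can be completed (using a saddle connection on each boundary) to an embedded triangle of $(M,\Sig)$, two of whose sides lie on the boundary of $C$ and hence have slope $k$. Thus $k \in \Cc(M,\Sig)$, and moreover the two other vertices of $\Delta_\T$ sit on either side of $k$. This shows the geodesics of $\Lc(M,\Sig)$ that emanate toward the boundary point $k$ ``surround'' $k$ from both sides, which is what prevents a tile from being unbounded in a bad way.

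The main structural step is to identify the tiles. For the fixed basepoint $\theta_0$, I would let $P(\theta_0)$ be the connected component of $\Hh \setminus \bigcup_{\ell \in \Lc(M,\Sig)} \ell$ containing $\theta_0$. I claim $P(\theta_0)$ is a convex hyperbolic polygon of area at most $\pi$: convexity because it is an intersection of half-planes (one side of each geodesic of $\Lc$ that bounds it); finite area because it is contained in some single ideal triangle $\Delta_\T$ from $\Ic(M,\Sig)$ — here is where I expect the real work to lie. To see the containment, I would argue that the triangles $\Delta_\T$ themselves tile, or at least cover, $\Hh$: the point $\theta_0$ corresponds (via its coordinates, thought of as a matrix $g_{\theta_0}\in \PSL(2,\R)$) to a flat structure $g_{\theta_0}\cdot(M,\Sig)$, and on \emph{any} flat surface there is at least one embedded triangle with vertices in $\Sig$ (triangulate a small neighbourhood of a singularity, or use a shortest saddle connection and extend). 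Translating back, $\theta_0$ lies in the interior of $\Delta_\T$ for the corresponding $\T$. Then $P(\theta_0) \subseteq \Delta_\T$ because the three sides of $\Delta_\T$ belong to $\Lc(M,\Sig)$, so they cannot be crossed by the component $P(\theta_0)$. Hence $\mathrm{area}(P(\theta_0)) \le \mathrm{area}(\Delta_\T) = \pi$, and $P(\theta_0)$ has finitely many sides since it is a convex subset of an ideal triangle cut out by finitely many geodesics (finiteness of the relevant geodesics follows from the fact that only finitely many embedded triangles have a given vertex in $\Cc$, up to... — this sub-finiteness claim is the second delicate point, and I would prove it by a compactness/discreteness argument on lengths of saddle connections bounded by the geometry of $\Delta_\T$).

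It remains to check the intersection condition for a tessellation — that two tiles meet in a common side or common vertex — and the covering property. Covering is immediate since every point of $\Hh$ either lies on some geodesic of $\Lc(M,\Sig)$ or in some component $P$, and the boundary geodesics get absorbed as sides. The edge-to-edge condition I would deduce from the fact that $\Lc(M,\Sig)$ is a \emph{full} set of geodesics in the following sense: if a geodesic $\ell \in \Lc(M,\Sig)$ has an endpoint $k \in \Cc(M,\Sig)$, then \emph{every} geodesic of $\Lc$ ending at $k$ together bounds the tiles around $k$, so no ``T-intersection'' occurs; this uses again the cylinder-decomposition picture in the periodic direction $k$, where the combinatorics of how cylinders and their boundary saddle connections fit together forces the ideal triangles at $k$ to be arranged fan-wise. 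Finally, invariance under half-translation coverings: if $p\colon (M',\Sig') \to (M,\Sig)$ is such a covering, then an embedded triangle of $(M,\Sig)$ lifts to embedded triangles of $(M',\Sig')$ with the same slopes (since $p$ is a local isometry respecting the flat structure and $p^*q = q'$), and conversely an embedded triangle of $(M',\Sig')$ projects to a union of embedded triangles of $(M,\Sig)$ — actually one must be slightly careful here, but the upshot is that the \emph{slopes} occurring, hence $\Cc$, hence $\Lc$, are identical for $(M,\Sig)$ and $(M',\Sig')$, so the induced tessellations coincide. The hardest part of the whole argument, I expect, is the local finiteness of $\Lc(M,\Sig)$ inside each ideal triangle $\Delta_\T$ — equivalently, that each tile has finitely many sides — since that is what genuinely uses the topological Veech dichotomy rather than soft arguments, and it is where one has to control infinitely many potential saddle connections using the cylinder structure.
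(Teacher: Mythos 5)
Your structural outline matches the paper's: show the ideal triangles cover $\Hh$, show the family $\Lc(M,\Sig)$ is locally finite, and deduce the tile properties; then invoke the equality $\Tria(M',\Sig')=\Tria(M,\Sig)$ for half-translation covers. But the key step is incorrectly argued.

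The gap is in the covering property. You write that since any flat surface admits some embedded triangle with vertices in $\Sig$, ``translating back, $\theta_0$ lies in the interior of $\Delta_\T$ for the corresponding $\T$.'' This does not follow. Writing $\theta_0 = A^{-1}(\imath)$ with $A \in \SL(2,\R)$, the existence of an embedded triangle on $A\cdot(M,\Sig)$ only gives an ideal triangle with \emph{some} triple of vertices in $\partial\Hh$; a generic such triple (say $\{0,1,2\}$) does not have $\imath$ in its convex hull, so $\theta_0\notin\Delta_\T$. The paper's Lemma~3.1 makes this work by picking a very specific triangle: take a shortest saddle connection $s_0$ on $A\cdot(M,\Sig)$, rotated to be horizontal, and extend $s_0$ to an embedded triangle by the shortest vertical segment hitting $\mathrm{int}(s_0)$. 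Minimality of $|s_0|$ then forces a quantitative bound on the slopes $k_1>0>k_2$ of the other two sides, namely $k_1-k_2\le 2/\sqrt3$, which is exactly what is needed to guarantee $\imath$ lies inside the ideal triangle with vertices $\{\infty,k_1,k_2\}$. Your phrase ``use a shortest saddle connection and extend'' points toward this, but without the slope estimate the containment assertion is unjustified.

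A second, related omission: the construction above can fail, namely when no vertical separatrix crosses $\mathrm{int}(s_0)$. In that case $\imath$ does \emph{not} lie in the interior of any $\Delta_\T$; it lies on the common side of two ideal triangles coming from an embedded triangle on each side of the vertical cylinder containing $s_0$. Handling this case requires knowing that the vertical foliation is periodic, which is precisely where the topological Veech dichotomy enters. So the dichotomy is used in \emph{both} halves of the argument, not just in the local-finiteness step as you suggest at the end; indeed your covering argument, as written, would go through (or fail) in exactly the same way for a surface that is not completely periodic, which is a sign it cannot be correct. The local-finiteness idea you sketch (bounding lengths of saddle connections using that horizontal and vertical directions are periodic, hence have cylinders of definite width) is essentially the paper's Lemma~3.2 and is fine in spirit, although the intermediate assertion ``only finitely many embedded triangles have a given vertex in $\Cc$'' is false as stated: the direction $\infty$ alone bounds infinitely many embedded triangles; the correct finiteness statement is about geodesics in $\Lc(M,\Sig)$ crossing a fixed geodesic.
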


\begin{Remark}\label{rk:compare:Farey}\hfill
\begin{itemize}
\item[(i)] We refer to Section~\ref{sec:trans:cover} for a detailed discussion on (half-)translation coverings.

\item[(ii)] In an earlier version of this paper, we showed that $\Pi(M,\Sig)$ is a tessellation of $\Hh$ only for the case $(M,\Sig)$ satisfies the topological Veech dichotomy. It turns out that  the same conclusion holds for any $(M,\Sig)$.
\end{itemize}
\end{Remark}

If $M$ is the standard torus $\C/\Z^2$ and $\Sig=\{0\}$, then $\Pi(M,\Sig)$ is the Farey tessellation. Indeed, consider an embedded triangle $\T$ in $(\C/\Z^2, \{0\})$ with the slopes of its sides being $k_i=p_i/q_i,\; p_i,q_i \in \Z, \gcd(p_i,q_i)=1, \; i=1,2,3$. If we cut $M$ along two sides of $\T$, we then get a parallelogram with the third side being a diagonal. Since the area of this parallelogram must be equal to the area of $M$, we have
$$
\left| \det \left(\begin{array}{cc} p_i & p_{i+1} \\ q_i & q_{i+1} \end{array} \right) \right|=|p_iq_{i+1}-p_{i+1}q_i|=1.
$$
for $i=1,2,3$, with the convention $(p_4,q_4)=(p_1,q_1)$. Thus $\Pi(\C/\Z^2,\{0\})$ is the Farey tessellation.
It follows from Theorem~\ref{thm:ideal:tri:tess} that if $(M,\Sig)$ is a translation covering of $(\C/\Z^2,\{0\})$, then $\Pi(M,\Sig)$ is the Farey tessellation as well.

Recall that a square-tiled surface is a translation surface which is obtained by gluing some copies of the unit square.
If $M$ is a square-tiled surface, and $\Sig$ is the set of singularities of the flat metric on $M$, then $(M,\Sig)$ is not necessarily a translation covering of $(\C/\Z^2,\{0\})$. 
This is because the natural map from $M$ onto $\C/\Z^2$ may send some regular points in $M$ to $0$.  
Thus, in this case  $\Pi(M,\Sig)$ is not necessarily the Farey tessellation.
In Figure~\ref{fig:tess:D16:H2}, we give the tessellation associated to a square-tiled surface $M$ in $\H(2)$ which is composed by $4$ unit squares, where $\Sig$ consists of the unique singularity of $M$.
 \begin{figure}[htb]
 \centering
\includegraphics[width=6cm]{tessellationW16.eps}
 \caption{Tessellation associated with a square-tiled surface in $\mathcal{H}(2)$, in the horizontal direction we have a single cylinder composed by $4$ squares}
  \label{fig:tess:D16:H2}
 \end{figure}


In \cite{Vee11}, Veech considered a tessellation of $\Hh$ associated to $(M,\Sig)$ which arises from the Delaunay partition of the surfaces in the orbit $\SL(2,\R)\cdot(M,\Sig)$.
To define this tessellation, one first identifies $\Hh$ with the quotient $\SL(2,\R)/{\mathrm{SO}(2,\R)}$.
Each tile $P$ of this tessellation corresponds to a subset $\tilde{P}$ of $\SL(2,\R)$ such that the Delaunay partition of $A\cdot (M,\Sig)$ remains the same as $A$ varies in $\tilde{P}$.
Each side of $P$ is a geodesic segment $\eta$ specified by the following condition: there are two embedded triangles $\T,\T'$ in $\Tria(M,\Sig)$ sharing a common side such that for any $A$ in $\tilde{\eta}$ (here $\tilde{\eta}$ is the pre-image of $\eta$ in $\SL(2,\R)$), the quadrilateral $A\cdot(\T\cup\T')$ is inscribable in a circle  (see \cite{Bow:thesis} for some interesting characterizations of this tessellation).
It turns out that every tile $P$ has at most $6g(M)-6+3|\Sig|$ sides, where $g(M)$ is the genus of $M$, and $|\Sig|$  is the cardinality of $\Sig$.
However, there is no known bound on the area of $P$. In~\cite{Bow:thesis}, it has been conjectured that the area of $P$ is at most $\pi$.

We will refer to the tessellation described above as the {\em iso-Delaunay tessellation} associated to $(M,\Sig)$ (this terminology was introduced in \cite{Bow10}). Iso-Delaunay tessellations can also be seen as the decomposition of the Teichm\"uller disc generated by $(M,\Sig)$ which is induced by a cell decomposition of the Teichm\"uller space. Note that iso-Delaunay tessellations are also invariant by half-translation coverings (see \cite[Prop.4.1]{Vee11})

Surprisingly, even though the origins of the iso-Delaunay tessellation and of the tessellation $\Pi(M,\Sig)$ seem to be unrelated, these two tessellations do coincide when $(M,\Sig)$ is the standard torus with one marked point $(\C/\Z^2,\{0\})$, in which case they both give the Farey tessellation.
In \cite[Th.1.3]{Vee11}, Veech showed that the $\GL(2,\R)$-orbit  (of the orienting double cover) of $(M,\Sig)$ contains a translation cover of the standard torus $(\C/\Z^2, \{0\})$ if and only if the associated iso-Delaunay tessellation is isomorphic to the Farey tessellation.
Inspired by this result, we will show 
\begin{Theorem}\label{th:sq:tiled:Farey:tess}
The tessellation $\Pi(M,\Sig)$ is isomorphic to  the Farey tessellation if and only if $\GL(2,\R)\cdot(\hat{M},\hat{\Sig})$ contains a translation cover of $(\C/\Z^2,\{0\})$, where $(\hat{M},\hat{\Sig})$ is the orienting double cover of $(M,\Sig)$.
\end{Theorem}

\begin{Remark}\label{rmk:compare:Veech:tess}
It would be interesting to determine how the iso-Delaunay tessellation and the tessellation in Theorem~\ref{thm:ideal:tri:tess} are related in general.
We hope to be able to address this question in future work.
It is also worth noticing that other tessellations of $\Hh$ associated to half-translation surfaces have been introduced by Smillie-Weiss~\cite{SW:Veech} (see also~\cite{Mu13} for related constructions).
\end{Remark}

\subsection{Action of the Veech group and a bound on the volume of Teichm\"uller curves}
An {\em affine automorphism} of $(M,\Sig)$ is an orientation preserving homeomorphism $f :M \ra M$ such that $f(\Sig)=\Sig$, and there is a matrix $A \in \SL(2,\R)$ such that on the local charts given by the flat metric on $M\setminus \Sig$, $f$ has the form $v \mapsto \pm A\cdot v+ c$, where $c \in \R^2$ is constant. The group of affine automorphisms of $(M,\Sig)$ will be denoted by $\Aff^+(M,\Sig)$.
To each element of $\Aff^+(M,\Sig)$, we have a corresponding element of $\PSL(2,\R)$ by the derivative mapping $D: \Aff^+(M,\Sig) \ra \PSL(2,\R)$.
The image of $\Aff^+(M,\Sig)$ under $D$ is called the {\em Veech group} of $(M,\Sig)$ and denoted by $\G(M,\Sig)$.
The pair $(M,\Sig)$ is called a {\em Veech surface} (or equivalently a {\em lattice surface}) if $\G(M,\Sig)$ is a lattice in $\PSL(2,\R)$.

Note that by construction, $\G(M,\Sig)$ permutes the elements of $\Tria(M,\Sig)$, hence acts naturally on the sets $\Ic(M,\Sig),\Lc(M,\Sig)$, and $\Cc(M,\Sig)$.
Denote by $\allowbreak \ol{\Ic}(M,\Sig), \; \ol{\Lc}(M,\Sig), \; \ol{\Cc}(M,\Sig)$ the respective quotients.
For simplicity, in what follows, once the pair $(M,\Sig)$ is specified, we will drop it from the notation of the associated objects, {\em i.e.} $\Ic, \Cc, \Lc,\G$, etc...

Understanding Veech groups is a central problem in Teichm\"uller dynamics. Various aspects of this problem has been addressed by several authors,
see for instance \cite{HS01, HS:inf:Veech, HubLan06, KS00, McM_hilbert, McM_flux, SW10, SW:Veech, Bow10, Sch04, WSch15, Mu13}.
One of the main goals of this paper is to contribute to the investigation of Veech groups by using the properties of the flat metric.
In particular, as a consequence of Theorem~\ref{thm:ideal:tri:tess}, we get a bound on the volume of the Teichm\"uller curve.

\begin{Theorem}\label{thm:area:TC:bound}
The surface $(M,\Sig)$ is a Veech surface if and only if  $\ol{\Ic}$ is a finite set and we have
 \begin{equation}
\label{eq:lattice:vol:bound}
 \Aa(\Hh/\G(M,\Sig)) \leq \pi\cdot \# \ol{\Ic}.
\end{equation}
\end{Theorem}

\begin{Remark}\label{rm:bound:area}
In Section~\ref{sec:quot:graph:diam}, we will introduce an algorithm to determine the cardinality of $\ol{\Ic}$ in the case $(M,\Sig)$ is a  Veech surface.
\end{Remark}


\subsection{The graph of periodic directions}\label{sec:per:dir:graph}
We say that a half-translation surface satisfies the {\em topological Veech dichotomy} if for every $\theta \in \RP^1$, the foliation $\xi_\theta$ of $M$ by straight lines in direction $\theta$ is either periodic, or minimal. This property can also be stated as follows: if there is a saddle connection in direction $\theta$ then the surface is decomposed into cylinders in this direction.
Any Veech surface satisfies this dichotomy.
However, is shown in \cite{CHM08, HS:inf:Veech,LanNg:cp} that there exist surfaces that satisfy the topological Veech dichotomy without being Veech surfaces (see also \cite{SW:Veech}).

Assume from now on that $(M,\Sig)$ satisfies the topological Veech dichotomy.
We will construct a graph $\GG$ underlying the tessellation $\Pi$ which comes equipped with a natural action of $\G$.
One may expect that the quotient graph $\GG/\G$ captures some geometric properties of the Teichm\"uller curve $\Hh/\G$.
The graph $\GG$ is defined as follows:
\begin{itemize}
 \item[$\bullet$] the vertex set $\GG^{(0)}$ of $\GG$ is $\Cc\sqcup \Ic$,

 \item[$\bullet$] for any pair $(k,\Delta) \in \Cc\times\Ic$, there is an edge  connecting $k$ and $\Delta$ if and only if $k$ is a vertex of $\Delta$,

 \item[$\bullet$] there is no edge between two elements of $\Cc$ nor two elements of $\Ic$.
\end{itemize}
We set the length of every edge of $\GG$ to be $1/2$.
We will call $\GG$ the {\em graph of periodic directions} of $(M,\Sig)$.

By construction, we have a natural action of $\G$ on $\GG$ by automorphisms.
We denote by $\ol{\GG}$ the quotient of $\GG$ by $\G$.
In the perspective of understanding the Veech group, we study of the geometry of $\GG$.
In particular, we will show

\begin{Theorem}\label{thm:G:per:dir:prop}
Let $(M,\Sig)$ be a half-translation surface satisfying the topological Veech dichotomy. Then the graph $\GG$ of periodic directions of $(M,\Sig)$ is connected, has infinite diameter, and is Gromov hyperbolic. The Veech group $\G$ acts freely on the set of edges of $\GG$. Moreover, $(M,\Sig)$ is a Veech surface if and only if $\ol{\GG}$ is a finite graph.
\end{Theorem}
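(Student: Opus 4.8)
The plan is to deduce most of the statement from Theorem~\ref{thm:ideal:tri:tess}, translating properties of the tessellation $\Pi(M,\Sig)$ into properties of the graph $\GG(M,\Sig)$. First I would observe that $\GG(M,\Sig)$ is, up to a bounded quasi-isometry, the dual incidence structure of the tessellation: the $\Ic$-vertices are the tiles, the $\Cc$-vertices are the ideal vertices of tiles, and an edge records incidence. \emph{Connectedness} then follows because $\Hh$ is connected and covered by the tiles, which meet along common vertices or sides; any two tiles can be joined by a chain of tiles sharing a vertex, and this chain lifts to an edge-path in $\GG(M,\Sig)$ through the corresponding $\Cc$-vertices. (One should check that two tiles sharing a side in fact share a vertex, which is automatic for ideal polygons.) For the action being \emph{free on edges}: an edge is a flag $(k,\Delta)$ with $k$ a vertex of $\Delta$; if $g\in\G(M,\Sig)$ fixes this edge it fixes the ideal triangle $\Delta_\T$ and one of its ideal vertices, hence fixes at least one more ideal point on $\partial\Hh$, so $g$ is either trivial or fixes three boundary points — but an ideal triangle has only three vertices and a nontrivial element of $\PSL(2,\R)$ fixing two boundary points is hyperbolic, which cannot fix the third. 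A short argument using that $\Delta_\T$ is a genuine (nondegenerate) ideal triangle rules out the hyperbolic case, so $g=\id$.

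Next, \emph{infinite diameter.} Here I would use the area estimate: each tile of $\Pi(M,\Sig)$ has area at most $\pi$ and finitely many sides, so there is a uniform lower bound on how much hyperbolic area a chain of $N$ tiles can ``reach'' — more precisely, a ball of radius $N$ in $\GG(M,\Sig)$ centered at a tile corresponds to a union of at most (exponentially many, but that is fine) tiles whose total area is finite, hence whose union has bounded diameter in $\Hh$ \emph{only if} the graph-metric balls were bounded, which they are not since $\Hh$ has infinite diameter. The cleanest route is: fix a base tile $\Delta_0$ and a geodesic ray in $\Hh$ starting in $\Delta_0$; the ray crosses infinitely many distinct tiles (each tile, having finite area, is crossed in a bounded-length sub-segment, and the ray has infinite length), and consecutive crossed tiles share a side hence are at bounded distance in $\GG(M,\Sig)$, but passing from $\Delta_0$ to the $n$-th crossed tile forces graph-distance $\to\infty$ because otherwise the first $n$ tiles would all lie in a bounded graph-ball around $\Delta_0$, which (being a finite union controlled in area) cannot contain an unbounded sub-ray. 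I expect this step to require the most care in making ``bounded graph-ball $\Rightarrow$ bounded region of $\Hh$'' precise; the key input is again finitely-many-sides-and-bounded-area, which prevents a single tile from being ``long and thin'' enough to span an arbitrarily long stretch of a geodesic.

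For \emph{Gromov hyperbolicity}, the idea is that $\GG(M,\Sig)$ is quasi-isometric to a convex subcomplex of the hyperbolic plane $\Hh$ equipped with the tessellation, and a tessellation of $\Hh$ by uniformly bounded-area, finitely-sided tiles gives a cell complex quasi-isometric to $\Hh$ itself, which is $\delta$-hyperbolic; alternatively one invokes that the incidence graph of such a tessellation embeds quasi-isometrically into $\Hh$. Concretely I would define a map $\GG(M,\Sig)\to\Hh$ sending each $\Delta$-vertex to a chosen interior point of the tile and each $\Cc$-vertex $k\in\partial\Hh$ to a point inside one adjacent tile near $k$, extend over edges by geodesics, and verify this is a quasi-isometric embedding with quasi-dense image using the uniform area/combinatorial bound on tiles; hyperbolicity of $\GG(M,\Sig)$ then follows from that of $\Hh$, since quasi-isometry preserves Gromov hyperbolicity for geodesic spaces. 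Finally, for the last claim, $\GG(M,\Sig)/\G(M,\Sig)=\ol{\GG}(M,\Sig)$ is finite if and only if there are finitely many $\G$-orbits of edges, i.e.\ finitely many orbits of flags $(k,\Delta)$; since each $\Delta$ has exactly three vertices this is equivalent to $\ol{\Ic}(M,\Sig)$ being finite, which by Theorem~\ref{thm:area:TC:bound} and the discussion around it holds precisely when $(M,\Sig)$ is a Veech surface (finiteness of $\ol{\Ic}$ forces $\vol(\Hh/\G)<\infty$ hence $\G$ a lattice; conversely a lattice has a finite-sided fundamental domain meeting finitely many tiles, bounding the number of orbits of tiles and hence of flags). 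The main obstacle I anticipate is the infinite-diameter/hyperbolicity pair: turning the geometric picture ``$\GG$ looks like $\Hh$'' into a genuine quasi-isometry requires carefully bounding tile diameters and inter-tile distances purely in terms of the area-$\le\pi$ and finitely-many-sides constraints, without any compactness assumption on individual tiles.
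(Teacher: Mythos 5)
Your treatment of the free action on edges and of the equivalence ``$\ol{\GG}$ finite $\Leftrightarrow$ Veech'' is essentially sound and close in spirit to the paper (for the free action, the paper argues more directly: an orientation-preserving element of $\PSL(2,\R)$ that fixes one vertex of an ideal triangle and permutes the remaining two must fix all three, hence is the identity). The connectedness sketch could be made to work, though the paper instead proves a quantitative bound $\dist(k,k') \le \log_2(\min\{\ii(k,k'),\ii(k',k)\}+1)+1$ via a divide-and-conquer argument on intersection numbers of saddle connections, which is cleaner and is reused later in the hyperbolicity proof.

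However, there is a fundamental gap in your arguments for infinite diameter and Gromov hyperbolicity. The graph $\GG(M,\Sig)$ is \emph{not} quasi-isometric to $\Hh$, and it is \emph{not} the dual incidence graph of the tessellation $\Pi(M,\Sig)$. On the second point: the tiles of $\Pi(M,\Sig)$ are the connected components of $\Hh\setminus\bigcup_{\g\in\Lc}\g$, which are in general proper subsets of the overlapping ideal triangles in $\Ic(M,\Sig)$, so $\Ic$-vertices of $\GG$ do not correspond to tiles. On the first (and more serious) point: every $\Cc$-vertex of $\GG$ has infinite valence (any saddle-connection direction lies in infinitely many embedded triangles), so $\GG$ is not locally finite; a quasi-isometry to the locally compact space $\Hh$ is impossible. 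The model case makes this vivid: for $(\C/\Z^2,\{0\})$ the graph $\GG'$ (two $\Cc$-vertices joined when they share an ideal triangle) is precisely the Farey graph, in which $\infty$ is adjacent to every integer and $0$ is adjacent to every $1/n$. Two boundary points that are ``far'' in the hyperbolic visual metric can be at $\GG$-distance $1$. Consequently the implication ``bounded graph-ball $\Rightarrow$ bounded region of $\Hh$'' fails at the very first step: the $1$-ball around a $\Cc$-vertex already has unbounded image in any putative embedding into $\Hh$. Your infinite-diameter argument and your hyperbolicity argument both rest on this false quasi-isometry.

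The paper takes a completely different route, and it is worth contrasting. For infinite diameter, it defines a coarse map $\Psi'$ from $\Cc(M,\Sig)$ to the curve graph $\Curv(S)$ of the underlying topological surface (sending a periodic direction to the core curves of its cylinders), shows $\dist_{\GG}(p,q)\ge\frac12\dd_{\rm AC}(\Psi(p),\Psi(q))$, and then proves $\diam\Psi'(\Cc)=\infty$ using the Klarreich--Hamenst\"adt description of $\partial_\infty\Curv(S)$ together with the Smillie--Vorobets theorem guaranteeing, on every area-one surface in the stratum, a cylinder of width bounded below. For hyperbolicity, it verifies the Masur--Schleimer ``guessing geodesics'' criterion: for each pair of periodic directions $(k,k')$, it builds a candidate path $\gg(k,k')$ in $\GG'(M,\Sig)$ by following the Teichm\"uller geodesic $a_t\cdot M$ and recording at integer times the direction of a cylinder of width $\ge K$, then proves the local and slim-triangle conditions using the cylinder-distance estimate (Corollary~\ref{cor:cyls:width:bounded}, which refines the intersection-number bound used for connectedness). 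Both steps crucially use flat-surface geometry and curve-complex machinery that cannot be replaced by a purely tessellation-combinatorial argument, precisely because the graph is far from being a ``discrete $\Hh$.''
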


\noindent {\bf Example:} in the case $(M,\Sig)=(\C/\Z^2,\{0\})$, we have $\G(\C/\Z^2,\{0\})=\PSL(2,\Z)$, and each of $\ol{\Ic}(\C/\Z^2,\{0\})$ and $\ol{\Cc}(\C/\Z^2,\{0\})$ contains a single element.
Let $\Delta_0$ be the hyperbolic ideal triangle whose vertices are $\{0,1,\infty\}$. Since $\PSL(2,\Z)$ contains an element that fixes $\Delta_0$ and permutes its vertices cyclically, namely $\pm \left(\begin{smallmatrix} 1 & -1 \\ 1 & 0 \end{smallmatrix}\right)$, we deduce that the graph $\ol{\GG}(\C/\Z^2, \{0\})$ consists of a unique segment joining the unique element of $\ol{\Ic}(\C/\Z^2,\{0\})$ and the unique element of $\ol{\Cc}(\C/\Z^2,\{0\})$.

Since $\GG$ is invariant with respect to (half)-translation coverings and the group $\G$ acts freely on the set of edges of $\GG$,  we get

\begin{Corollary}\label{cor:sq:tiled:index:V:Gp}
If $(M,\Sig)$ is a translation cover of $(\C/\Z^2,\{0\})$, then the number of edges of $\ol{\GG}$ is equals to $[\PSL(2,\Z):\G]$.
\end{Corollary}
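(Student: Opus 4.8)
The plan is to reduce the corollary to the case of the standard torus, where $\GG$ is the incidence graph of the Farey tessellation, and then to count the orbits of a free and transitive group action. First I would set up the reduction. Since $(M,\Sig)$ is a translation cover of $(\C/\Z^2,\{0\})$, the invariance of the graph of periodic directions under (half-)translation coverings gives a $\PSL(2,\R)$-equivariant identification $\GG(M,\Sig)=\GG(\C/\Z^2,\{0\})$. By Remark~\ref{rk:compare:Farey}(ii), $\Ic(\C/\Z^2,\{0\})$ is the set of triangles of the Farey tessellation and $\Cc(\C/\Z^2,\{0\})=\Q\cup\{\infty\}$, so $\GG(\C/\Z^2,\{0\})$ is exactly the bipartite incidence graph of the Farey tessellation: one vertex for each Farey fraction, one vertex for each Farey triangle, and an edge joining $k$ to $\Delta$ precisely when $k$ is one of the three ideal vertices of $\Delta$. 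I would also record that $\G(M,\Sig)$ is a subgroup of $\G(\C/\Z^2,\{0\})=\PSL(2,\Z)$, of finite index: it is a lattice because a (one-point branched) torus cover is a Veech surface (\cite{GJ00}), and it lies in $\PSL(2,\Z)$ because it acts on $\GG(M,\Sig)=\GG(\C/\Z^2,\{0\})$ through elements of $\PSL(2,\R)$ preserving the Farey tessellation, and any such element lies in $\PSL(2,\Z)$: move $\Delta_0=\{0,1,\infty\}$ to a Farey triangle, correct by an element of $\PSL(2,\Z)$ so that $\Delta_0$ is fixed setwise, and observe that the elements of $\PSL(2,\R)$ stabilizing $\{0,1,\infty\}$ are the powers of the order-three rotation $\pm\left(\begin{smallmatrix}1&-1\\1&0\end{smallmatrix}\right)\in\PSL(2,\Z)$ (the stabilizer in $\PGL(2,\R)$ is dihedral of order six, and its orientation-preserving part is cyclic of order three).

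Next I would show that $\PSL(2,\Z)$ acts freely and transitively on the edge set $E$ of $\GG(\C/\Z^2,\{0\})$. Freeness is exactly the assertion of Theorem~\ref{thm:G:per:dir:prop} applied to $(\C/\Z^2,\{0\})$, which satisfies the topological Veech dichotomy (rational directions are periodic, irrational ones are uniquely ergodic); alternatively, an element fixing an edge $(k,\Delta)$ fixes $\Delta$, hence lies in $\mathrm{Stab}(\Delta)\cong\Z/3$, and fixes the vertex $k$ of $\Delta$, which the nontrivial elements of that $\Z/3$ permute. For transitivity, recall that $\PSL(2,\Z)$ acts transitively on the triangles of the Farey tessellation (two triangles sharing an edge are interchanged by an order-two element of $\PSL(2,\Z)$, and the dual graph of the tessellation is connected), and that the rotation $\pm\left(\begin{smallmatrix}1&-1\\1&0\end{smallmatrix}\right)$ fixes $\Delta_0$ while cyclically permuting $\{0,1,\infty\}$. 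So, given $(k,\Delta)\in E$, first move $\Delta$ onto $\Delta_0$ by an element of $\PSL(2,\Z)$, carrying $k$ to one of $0,1,\infty$; then a power of the order-three rotation carries it to $\infty$. Hence $(k,\Delta)$ lies in the $\PSL(2,\Z)$-orbit of $e_0:=(\infty,\Delta_0)$.

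Finally I would do the count. By the previous step, $g\mapsto g\cdot e_0$ is a $\PSL(2,\Z)$-equivariant bijection from $\PSL(2,\Z)$ onto $E$, so the orbits of $\G(M,\Sig)$ on $E$ are in bijection with the right cosets $\G(M,\Sig)\backslash\PSL(2,\Z)$, whose number is $[\PSL(2,\Z):\G(M,\Sig)]$. Since $\GG(M,\Sig)$ is bipartite with the two parts $\Cc(M,\Sig)$ and $\Ic(M,\Sig)$ preserved by $\G(M,\Sig)$, there are no edge-inversions, and combined with the freeness of the $\G(M,\Sig)$-action on edges (Theorem~\ref{thm:G:per:dir:prop}) the edges of the quotient graph $\ol{\GG}(M,\Sig)$ are exactly the $\G(M,\Sig)$-orbits in $E$. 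Therefore the number of edges of $\ol{\GG}(M,\Sig)$ equals $[\PSL(2,\Z):\G(M,\Sig)]$ (and is finite precisely when $(M,\Sig)$ is a Veech surface, consistent with Theorem~\ref{thm:G:per:dir:prop}). The only points needing a little care are the two bits of bookkeeping flagged above --- that an element of $\PSL(2,\R)$ preserving the Farey tessellation is forced into $\PSL(2,\Z)$, and keeping left versus right cosets straight in the orbit count --- while the rest is a direct application of Theorem~\ref{thm:G:per:dir:prop} and Remark~\ref{rk:compare:Farey} together with standard facts about the Farey tessellation.
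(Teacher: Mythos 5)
Your proof is correct and follows essentially the same route the paper intends: deduce $\GG(M,\Sig)=\GG(\C/\Z^2,\{0\})$ from invariance under translation coverings, observe from the preceding example that $\PSL(2,\Z)$ acts freely and transitively on the edges of this graph (i.e.\ $\ol{\GG}(\C/\Z^2,\{0\})$ has a single edge), and then count $\G(M,\Sig)$-orbits as cosets. The extra bookkeeping you flag (that an orientation-preserving isometry preserving the Farey tessellation lies in $\PSL(2,\Z)$, and the left-vs-right coset convention) are sensible details filled in beyond the paper's one-line derivation, but they do not change the underlying argument.
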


Generally, it would be interesting to determine to what extend the geometry and topology of the hyperbolic surface $\Hh/\G$ is encoded in $\ol{\GG}$.
We  hope to return to this problem in a near future.

\subsection{Fundamental domain and generators of the Veech group}
%
In the appendix, we propose an algorithm to determine the graph $\ol{\GG}$ explicitly in the case $(M,\Sig)$ is a Veech surface, and hence to get a bound for $\Aa(\Hh/\G)$ in this case using Theorem~\ref{thm:area:TC:bound}. This algorithm also allows us to calculate a generating set  and a ``coarse'' fundamental domain of the Veech group $\G$.

\subsection*{Acknowledgment:} The author is grateful to Howie Masur for the helpful conversation. He warmly thanks Vincent Delecroix, Samuel Leli\`evre, and  Huiping Pan for the useful comments on an earlier version of this paper. He thanks the referee for the valuable comments, especially on Veech's work~\cite{Vee11}, which help to improve some aspects of the paper.

\section{Embedded triangles and coverings}\label{sec:trans:cover}
\subsection{Half-translation covering}\label{subsec:tree:trans:cover}
Let $(M',\Sig')$ and $(M,\Sig)$  be two  half-translation surfaces with marked points. Assume that $M'$ and $M$ are defined by two pairs (Riemann surface, quadratic differential) $(X',q')$ and $(X,q)$ respectively. Let $\Sig$ (resp. $\Sig'$) be a finite subset of $M$ (resp. of $M'$) that contains all the zeros and (simple) poles of $q$ (resp. of $q'$).
A {\em half-translation covering} is a ramified covering of Riemann surfaces $f: X' \ra X$ which is branched over $\Sig$ such that $\Sig'=f^{-1}(\Sig)$ and  $q'=f^*q$.
In particular, an orienting double covering map is a half-translation covering.
If both $M$ and $M'$ are translation surfaces then such a map is called a {\em translation covering}.
Note that such coverings are also known as {\em balanced coverings} (see e.g. \cite{HS01}).


Two half-translation surfaces with marked points are said to be {\em affine  equivalent} if they belong to the same $\PSL(2,\R)$-orbit up to scaling.
As suggested by Hubert and Schmidt~\cite{HS01}, we can define the  notion of {\em tree of half-translation coverings}  as follows:  such a tree is a connected  acyclic directed graph whose vertices are equivalence classes of half-translation surfaces with marked points, and two vertices, represented by $(M_1,\Sig_1)$ and $(M_2,\Sig_2)$,
are connected by a directed edge from first to the second if there exists a half-translation covering map $f:(M_1,\Sig_1) \ra (M'_2,\Sig'_2)$,
where $(M'_2,\Sig'_2)$ is a surface in the equivalence class of $(M_2,\Sig_2)$.
Note that  any loop formed by oriented edges of this graph must be trivial (constant).
It follows from a result by M\"oller~\cite{Mo_invent06} that if a tree of translation coverings contains a surface which is not a torus cover then it has a root.


\subsection{Invariance of the set of embedded triangles}
Throughout this section  $(M,\Sig)$ will be a fixed half-translation surface with marked points, which is defined by a meromorphic quadratic differential $(X,q)$ whose poles are all simple.
\begin{Lemma}\label{lm:loc:isom:embed}
Let $\varphi: \T \ra M$ be a map from an Euclidean  triangle $\T$ to $M$ such that
\begin{itemize}
  \item the vertices of $\T$ are mapped to points in $\Sig$,
  \item $\varphi(\T\setm\T^{(0)}) \subset M\setm\Sig$, where $\T^{(0)}$ is the set of vertices of $\T$,
  \item the restriction of $\varphi$ to $\T\setm\T^{(0)}$ is locally isometric.
\end{itemize}
Then the restriction of $\varphi$ to $\T\setm\T^{(0)}$ is an embedding.
\end{Lemma}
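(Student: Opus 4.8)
The plan is to show that the only way a locally isometric map from $\T \setm \T^{(0)}$ can fail to be injective is by producing a closed geodesic or a self-intersecting geodesic inside the image, both of which are incompatible with the source being (the interior of) a convex Euclidean triangle. First I would use the fact that $\varphi$ is locally isometric on $\T \setm \T^{(0)}$ together with the completeness of the flat metric on $M \setm \Sig$ (away from the singularities, the metric is that of a flat surface, and geodesics through non-singular points extend uniquely) to conclude that $\varphi$ sends Euclidean straight segments in $\T$ to geodesic segments in $M \setm \Sig$. In particular the images of the three sides of $\T$ (minus their endpoints) are geodesic segments, and more generally for any point $p$ in the interior of $\T$ and any direction, the image of the corresponding segment is geodesic.

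Suppose now that $\varphi$ is not injective on $\T \setm \T^{(0)}$, so there are distinct points $x,y \in \T \setm \T^{(0)}$ with $\varphi(x)=\varphi(y)$. Since $\varphi$ is a local isometry, it is a local homeomorphism, hence an open map; the idea is to derive a contradiction with the topology of the triangle. The key step is the following: because $\T$ is simply connected and $\varphi$ restricted to $\T\setm\T^{(0)}$ is a local isometry into the flat surface $M\setm\Sig$, I would like to ``develop'' $\varphi$, i.e. compare it with the developing map. Concretely, pick a basepoint $p_0$ in the interior of $\T$; the segment from $p_0$ to $x$ and the segment from $p_0$ to $y$ are mapped to two geodesic segments in $M$ issuing from $\varphi(p_0)$ and ending at the common point $\varphi(x)=\varphi(y)$. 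Their concatenation $c$ is a closed path in $M\setm\Sig$ based at $\varphi(p_0)$. If this closed path were geodesic at the point $\varphi(x)$ (i.e. the two segments arrive with opposite tangent directions), then we would have a closed geodesic or a geodesic segment with both endpoints at $\varphi(p_0)$ lying in the image; but in a flat surface a geodesic segment starting and returning to the same point with the same or opposite direction, obtained as the image of two honest straight segments in a convex planar triangle, forces $x$, $y$ and $p_0$ to be collinear in $\T$, and then injectivity of $\varphi$ on that single straight segment (which is immediate, since a local isometry restricted to a segment whose image is a non-closed geodesic segment of finite length inside $M\setm\Sig$ is injective unless the geodesic closes up — ruled out by length/holonomy considerations) gives $x=y$.

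If instead the two segments do not meet oppositely at $\varphi(x)$, consider a small disk $D$ around $x$ in $\T$ and a small disk $D'$ around $y$ in $\T$; since $\varphi$ is a local isometry, $\varphi(D)$ and $\varphi(D')$ are flat disks of the same radius around the common point $\varphi(x)=\varphi(y)$, hence they coincide (a flat disk around a nonsingular point is determined by its center and radius). Pulling back, we find that near $x$ and near $y$ the map $\varphi$ agrees after a Euclidean isometry identifying $D$ with $D'$; propagating this identification along paths in the connected set $\T\setm\T^{(0)}$, and using simple connectivity of $\T$, I would obtain a nontrivial Euclidean self-isometry of a neighborhood of the straight segment $[x,y]\subset\T$ compatible with $\varphi$, i.e. a deck-type symmetry. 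But $\T\setm\T^{(0)}$ is the interior (together with the relative interiors of the sides) of a convex triangle, which admits no nontrivial covering automorphism of this kind fixing the induced flat structure; this contradiction forces $x=y$. The main obstacle I expect is handling the boundary of $\T$ carefully — the relative interiors of the sides are included in the domain but the vertices are not, and the image may approach $\Sig$ along the sides — so the developing/holonomy argument has to be run on the interior first and then extended to the sides by continuity of $\varphi$ and by the hypothesis $\varphi(\T\setm\T^{(0)})\subset M\setm\Sig$, which guarantees the side-points themselves stay in the nonsingular locus where the local isometry statement applies.
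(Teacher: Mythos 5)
Your starting point is sound: since $\varphi$ is a local isometry on $\T\setm\T^{(0)}$, the coincidence $\varphi(x)=\varphi(y)$ produces a local Euclidean isometry $g$ (a ``deck-type'' identification) with $g(x)=y$, and on a half-translation surface $g$ is necessarily of the form $v\mapsto \pm v+c$. This is exactly the holonomy of the loop $\varphi(\ol{xy})$ that the paper works with (the auxiliary basepoint $p_0$ is unnecessary). From there, however, the argument does not close. The dichotomy you propose --- whether the two segments from $p_0$ arrive at $\varphi(x)$ with opposite tangents --- is not the relevant one; the relevant split is by the derivative part of $g$, namely $+\Id$ versus $-\Id$. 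Your first sub-case's assertion that opposite arrival forces $x,y,p_0$ to be collinear in $\T$ is unjustified, and the appeal to ``length/holonomy considerations'' to rule out closed geodesics is precisely the thing that needs proving, so that branch is circular. In the second sub-case the conclusion is a non-sequitur: $g$ is an isometry of $\R^2$ that does \emph{not} preserve $\T$ (it carries $\T$ to $\T+c$ or to $-\T+c$), so it is not a ``covering automorphism'' of $\T\setm\T^{(0)}$, and simple connectivity of the triangle is not what is being violated.

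What is missing is the elementary geometric finish that the paper supplies after identifying the holonomy $h$ with $h(x)=y$. If $h(v)=-v+c$, the midpoint $x_0=(x+y)/2=c/2$ is a fixed point of $h$, and the resulting local $\Z/2$ symmetry of $\varphi$ near $x_0$ forces $\varphi(x_0)$ to be a cone point of angle $\pi$, contradicting $\varphi(\T\setm\T^{(0)})\subset M\setm\Sig$. If $h(v)=v+c$, then $\T^*$ and $\T^*+c$ (triangles minus vertices) overlap, and a short convexity argument shows some vertex $v_0$ of $\T$ satisfies $v_0+c\in\T^*$ or $v_0-c\in\T^*$; since $v_0$ and $v_0\pm c$ have the same image under $\varphi$, this again contradicts the hypothesis on $\varphi(\T\setm\T^{(0)})$. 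Both of these geometric observations are absent from your proposal, so as written the proof has a genuine gap.
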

\begin{proof}
In what follows we identify $\T$ with a subset of $\R^2$.
Assume that there are two points $x_1,x_2 \in \T\setm\T^{(0)}$ such that $\varphi(x_1)=\varphi(x_2)$. Since a triangle is a convex subset of the plane, the segment $\ol{x_1x_2}$ is contained in $\T$. Its image by $\varphi$ is a loop $\gamma$ in $M\setm\Sig$. Let $h \in \{\pm\Id\}\ltimes \R^2$ be the holonomy of $\gamma$.

If $h(v)=-v+c$, then we have $x_2=-x_1+c$. Let $x_0$ be the midpoint of $\ol{x_1x_2}$ then $x_0=c/2$. Thus $h(x_0)=x_0$, which means that $x_0$ is mapped to singular point with cone angle $\pi$ of  $M$.
By assumption $\Sig$ contains all the singularities of $M$. Since $x_0\in \T\setm\T^{(0)}$ we have a contradiction to the assumption that $\varphi(\T\setm\T^{(0)})\subset M \setm\Sig$. Hence this case does not occur.

If $h(v)=v+c$ then $c=\overrightarrow{x_1x_2}$.
Let $\T_c$ denote triangle $\T+c$. Let $\T^*$ (resp. $\T^*_c$) denote the triangles $\T$ (resp. $\T_c$) with its vertices removed.
By assumption, we have $\T^*\cap \T^*_c \neq \varnothing$. One  readily checks that this condition implies that either $\T^*_c$ contains a vertex of $\T$, or $\T^*$ contains a vertex of $\T_c$.
It follows that there is a vertex $v_0$ of $\T$ such that either $v_0+c \in \T^*$ or $v_0-c \in \T^*$.
Since $v_0$ and $v_0\pm c$ are mapped to the same point in $M$, we have again a contradiction to the assumption that $\varphi(\T\setm\T^{(0)})\subset M\setm\Sig$.
Thus the restriction of $\varphi$ to $\T\setm\T^{(0)}$ is an embedding.
\end{proof}

\begin{Lemma}\label{lm:blced:cov:same:tria}
  Let $f: (M',\Sig') \ra (M,\Sig)$ be a half-translation covering of half-translation surfaces with marked points.
  Then we have $\Tria(M',\Sig')=\Tria(M,\Sig)$. 
\end{Lemma}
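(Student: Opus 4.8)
The plan is to show that the two sets $\Tria(M',\Sig')$ and $\Tria(M,\Sig)$ coincide by proving both inclusions, using the covering map $f$ to transport embedded triangles back and forth. Recall that embedded triangles are defined via maps $\varphi: \T \ra M$ (resp.\ $\T \ra M'$) with $\varphi^*q = dz^2$, sending vertices to $\Sig$ (resp.\ $\Sig'$) and embedding the complement of the vertices into $M \setm \Sig$ (resp.\ $M'\setm\Sig'$); and that by convention we may and do identify $\T$ with the underlying Euclidean triangle, so that $\Tria(M,\Sig)$ and $\Tria(M',\Sig')$ are honestly both subsets of the set of isometry classes of Euclidean triangles. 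So the statement $\Tria(M',\Sig')=\Tria(M,\Sig)$ means: a Euclidean triangle $\T$ embeds in one surface if and only if it embeds in the other.

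First I would take $\varphi: \T \ra M$ in $\Tria(M,\Sig)$ and lift it through $f$. Since $\varphi$ maps $\T \setm \T^{(0)}$ into $M \setm \Sig$, and $f$ restricted to $f^{-1}(M\setm\Sig) \ra M\setm\Sig$ is a genuine (unramified) covering of the complements, and $\T \setm \T^{(0)}$ is simply connected (a triangle minus three boundary points is still simply connected, being convex), the restriction $\varphi|_{\T\setm\T^{(0)}}$ lifts to a map $\tilde\varphi: \T\setm\T^{(0)} \ra M'\setm\Sig'$ with $f\circ\tilde\varphi = \varphi$. Because $f$ is a local isometry for the flat metrics (as $q'=f^*q$, hence $\tilde\varphi^*q' = \tilde\varphi^*f^*q = \varphi^*q = dz^2$), the lift $\tilde\varphi$ is locally isometric. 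Then $\tilde\varphi$ extends continuously to all of $\T$ (a compact metric completion argument, or: the three missing vertices have punctured neighborhoods of finite diameter mapping into $M'$, which is complete), and the extended vertex images lie in $f^{-1}(\Sig) = \Sig'$. Now $\tilde\varphi$ satisfies exactly the three hypotheses of Lemma~\ref{lm:loc:isom:embed}: vertices go to $\Sig'$, $\T\setm\T^{(0)}$ goes to $M'\setm\Sig'$, and the restriction there is locally isometric. Hence that restriction is an embedding, and $\tilde\varphi^*q'=dz^2$, so $\tilde\varphi \in \Tria(M',\Sig')$. This gives $\Tria(M,\Sig) \subseteq \Tria(M',\Sig')$.

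Conversely, take $\psi: \T \ra M'$ in $\Tria(M',\Sig')$ and simply postcompose with $f$ to get $\varphi := f\circ\psi: \T \ra M$. The vertices of $\T$ map into $f(\Sig') \subseteq \Sig$; since $\psi(\T\setm\T^{(0)}) \subseteq M'\setm\Sig'$ and $f(M'\setm\Sig')\subseteq M\setm\Sig$, we get $\varphi(\T\setm\T^{(0)})\subseteq M\setm\Sig$; and $\varphi^*q = \psi^* f^* q = \psi^* q' = dz^2$, so in particular $\varphi|_{\T\setm\T^{(0)}}$ is locally isometric. Again Lemma~\ref{lm:loc:isom:embed} applies and shows $\varphi|_{\T\setm\T^{(0)}}$ is an embedding, so $\varphi \in \Tria(M,\Sig)$, giving the reverse inclusion.

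The main thing to be careful about is the lifting step in the first inclusion: one must check that $\T\setm\T^{(0)}$ is simply connected so the lift exists, and that the continuous extension to the vertices is legitimate (the vertices of $\varphi(\T)$ are branch points of $f$, so $\tilde\varphi$ near a vertex wraps around a cone point of $M'$, but the local picture is just that of a sector in the plane mapping to a cone, which extends continuously over the apex). Everything else is a routine application of the already-proved Lemma~\ref{lm:loc:isom:embed} together with the functoriality $\varphi^*q=dz^2 \iff \tilde\varphi^*q'=dz^2$ that comes from $q'=f^*q$; I do not anticipate a genuine obstacle, only bookkeeping about the punctured triangle and the flat structures near the cone points.
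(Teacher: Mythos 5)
Your proof is correct and follows essentially the same route as the paper: project via $f$ for one inclusion, lift through $f$ for the other, and in both cases invoke Lemma~\ref{lm:loc:isom:embed} to upgrade the resulting local isometry to an embedding. The extra care you take with the lifting step (simple connectivity of $\T\setm\T^{(0)}$ and continuous extension over the vertices) is detail the paper elides but does not constitute a different argument.
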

\begin{proof}
Consider an embedded triangle $\varphi :\T \ra M'$ with $\varphi(\T^{(0)}) \subset \Sig'$. Composing with $f$, we get a map $\phi:=f\circ \varphi: \T \ra M$ with $\phi(\T^{(0)}) \subset \Sig$. Since $f$ is a half-translation covering, the map $\phi$ is a local isometry on $\T\setm\T^{(0)}$ and satisfies
$\phi^*q=dz^2$ in the interior of $\T$ (where $q$ is the quadratic differential defining the flat metric of $M$).
It follows from Lemma~\ref{lm:loc:isom:embed} that $f\circ\varphi(\T)$ is an embedded triangle in $(M,\Sig)$.

On the other hand, given an embedded triangle $\phi :\T \ra M$ with vertices in $\Sig$, we can lift $\phi$ to a map $\hat{\phi}: \T \ra M'$ which is also a local isometry on $\T \setm\T^{(0)}$. By Lemma~\ref{lm:loc:isom:embed}, we have that $\hat{\phi}: \T \ra M'$ is also an embedded triangle in $M'$ with vertices in $\Sig'$. Thus  the sets $\Tria(M',\Sig')$ and $\Tria(M,\Sig)$ are equal.
\end{proof}



\section{Tessellation of the hyperbolic plane}\label{sec:tessellation}
Our goal now is to give the proofs of Theorem~\ref{thm:ideal:tri:tess}  and Theorem~\ref{thm:area:TC:bound}.
By Lemma~\ref{lm:blced:cov:same:tria}, we can replace $(M,\Sig)$ by its orienting cover.
Therefore, throughout this section  $(M,\Sig)$ is a translation surface with marked points. 
We also normalize (using the action of $\GL^+(2,\R)$ such that $\Aa(M)=1$.

\subsection{The ideal triangles in $\Ic(M,\Sig)$ cover $\Hh$}
We first show

\begin{Lemma}\label{lm:ideal:tri:cover}
Let $z$ be a point in $\Hh$. Then either there is an embedded triangle $\T \in \Tria(M,\Sig)$ such that $z$ is contained in the interior of the ideal triangle $\Delta_\T$ associated with $\T$, or there exist two embedded triangles $\T_1,\T_2 \in \Tria(M,\Sig)$ such that $\Delta_{\T_1}\cup\Delta_{\T_2}$ is an ideal quadrilateral that contains $z$ in one of its diagonals.
\end{Lemma}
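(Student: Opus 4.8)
The plan is to interpret a point $z = x + \imath y \in \Hh$ as encoding a direction on $M$ together with a scale, and to use the topological Veech dichotomy to produce embedded triangles whose associated ideal triangles surround $z$. Concretely, $z$ determines a matrix $A_z \in \SL(2,\R)$ (for instance $A_z = \left(\begin{smallmatrix} y^{-1/2} & -xy^{-1/2} \\ 0 & y^{1/2}\end{smallmatrix}\right)$ or its inverse, normalized so that $A_z \cdot \imath = z$) and hence a translation surface $A_z \cdot M$. Under this normalization, the slope $k_a$ of a saddle connection $a$ equals a boundary point of $\Hh$, and the condition that $z$ lie in the ideal triangle $\Delta_\T$ with vertices $k_1,k_2,k_3$ translates into a combinatorial condition about how the three slopes separate the circle at infinity as seen from $z$ — equivalently, after applying $A_z^{-1}$, that the three sides of the normalized triangle point into three ``different quadrants'' in a suitable sense. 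So I would first reduce, via the $\SL(2,\R)$-action (which acts on $\Hh$ by isometries, on $\Tria(M,\Sig)$, and on slopes by Möbius transformations simultaneously), to the case $z = \imath$; I must then find $\T_1, \T_2 \in \Tria(M,\Sig)$ whose ideal triangles either contain $\imath$ in the interior of one of them, or form a quadrilateral with $\imath$ on the common diagonal.

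The core step is to find a single embedded triangle at all, and to control the slopes of its sides relative to the horizontal and vertical directions. Here is where the topological Veech dichotomy enters. After normalizing, consider the horizontal direction. There is always at least one saddle connection on $M$ (take a shortest geodesic between points of $\Sig$, or a short one in a generic direction and rotate); by the topological Veech dichotomy, the direction of any saddle connection is periodic, so $M$ decomposes into cylinders in that direction. Inside a cylinder of a periodic direction one can find an embedded triangle — e.g. a cylinder decomposes into embedded triangles and parallelograms, and a parallelogram is the union of two embedded triangles along a diagonal — and two of the three sides of such a triangle lie along the (periodic) core direction while the third is a ``transversal'' saddle connection crossing the cylinder. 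Thus I get an embedded triangle $\T$ one of whose sides has a prescribed slope (the periodic direction) and whose opposite vertex is some other slope. By choosing the periodic direction appropriately — say, a periodic direction with slope very close to $0$ (horizontal) versus a periodic direction with slope very close to $\infty$ (vertical), which exist because the rational/periodic directions coming from saddle connections are dense in $\RP^1$ for surfaces with the topological Veech dichotomy — I can arrange that the ideal triangle $\Delta_\T$ has one vertex on each ``side'' of $\imath$, and in fact I should be able to produce two such triangles sharing a side so that their union is a quadrilateral straddling $\imath$.

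The hard part will be the density/separation argument: showing that the set of slopes $\Cc(M,\Sig)$ is dense in $\RP^1$ and, more delicately, that one can choose the triangles so that $z$ genuinely lands either inside one ideal triangle or on a diagonal of the union of two — one has to rule out the degenerate possibility that $z$ always sits on a \emph{side} (a geodesic of $\Lc(M,\Sig)$) with nothing filling in on one side of it. The resolution I anticipate is: if $z$ lies on a geodesic $\ell \in \Lc(M,\Sig)$, then $\ell$ is a side of some ideal triangle $\Delta_{\T_1}$ coming from $\T_1 \in \Tria(M,\Sig)$ with $z \in \ell$; I then need a second embedded triangle $\T_2$ on the other side of $\ell$ sharing that side, which amounts to showing that the saddle connection underlying $\ell$ bounds embedded triangles on both sides — this is a local flat-geometry statement about the two cylinders (in the two periodic directions realizing the endpoints of $\ell$) adjacent to that saddle connection, and should follow by the same cylinder-decomposition reasoning applied on each side, together with a convexity argument showing that among all ideal triangles through $z$ with side $\ell$, at least one on each side has $z$ strictly between its other two vertices or at worst on $\ell$, forcing the quadrilateral configuration. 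The case where $z$ is in the interior of no geodesic of $\Lc(M,\Sig)$ is easier: then any triangle produced as above with vertices separating $\imath$ works directly, after possibly subdividing a cylinder more finely (using that the transversal saddle connections in a periodic cylinder realize a dense set of slopes in the arc between the two ``short'' directions) to pin $\imath$ inside.
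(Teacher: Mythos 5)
The proposal correctly reduces to $z = \imath$ via $\PSL(2,\R)$, and it correctly identifies that the topological Veech dichotomy and cylinder decompositions are the main tools. But there is a genuine gap at the central step: you never establish \emph{why} the ideal triangle you produce actually contains $\imath$. Having the three slopes ``on both sides of $\imath$'' (i.e.\ one at $\infty$, one in $\R_{<0}$, one in $\R_{>0}$) is not sufficient: the triangle $\{\infty, k_1, k_2\}$ with $k_1 < 0 < k_2$ contains $\imath$ if and only if the half-circle geodesic through $k_1,k_2$ passes below $\imath$, which is a quantitative condition ($-k_1k_2 < 1$). Your ``density'' argument cannot supply this: the slopes of saddle connections crossing a fixed cylinder are determined by the flat geometry of the surface and do not form a dense subset of the arc between the two boundary directions (they only accumulate on the cylinder direction itself by wrapping more times), so ``subdividing more finely'' does not let you pin $\imath$ inside one of the triangles.

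The paper's proof supplies exactly the missing quantitative input, and it does so by exploiting the \emph{shortest} saddle connection in an essential way — something you mention in passing but never use. After normalizing so that the shortest saddle connection $s_0$ of $A\cdot(M,\Sig)$ is horizontal, one splits into two cases depending on whether the interior of $s_0$ is crossed by a vertical separatrix. If yes, the first-return vertical segment to $\inter(s_0)$ together with $s_0$ spans an embedded triangle with side slopes $\infty, k_1, k_2$ and $k_1>0>k_2$, and the minimality of $|s_0|$ forces $k_1-k_2 \le 2/\sqrt{3}$, which puts $\imath$ strictly inside $\Delta_{\T'}$. If no vertical separatrix meets $\inter(s_0)$, then the vertical direction is not minimal, so by the topological Veech dichotomy it is periodic, and $s_0$ lies inside a vertical cylinder; this yields two embedded triangles sharing the vertical side of the cylinder, whose ideal triangles form a quadrilateral with $\imath$ on the common diagonal $(0,\infty)$. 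Note also that your anticipated case split — ``does $z$ lie on a geodesic in $\Lc$'' — is not the right dichotomy and would be hard to verify a priori; the paper's dichotomy (vertical separatrix hits $\inter(s_0)$ or not) is checkable directly from the flat geometry and is precisely where the topological Veech dichotomy is invoked. Without the shortest-saddle-connection estimate, the argument does not close.
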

\begin{proof}
Let $A$ be a matrix in $\SL(2,\R)$ such that $z=A^{-1}(\imath)$. Consider the surface $(M',\Sig'):=A\cdot(M,\Sig)$.  Let $\Sys(M',\Sig')$ denote the length of the shortest saddle connection on $M'$ (with endpoints in $\Sig'$). Let $s_0$ be a saddle connection  in $M'$ such that $|s_0|=\Sys(M',\Sig')$. Replacing $A$ by $RA$, where $R\in \mathrm{SO}(2,\R)$, if necessary (note that $(RA)^{-1}(\imath)=A^{-1}(\imath)=z$), we can assume that $s_0$ is horizontal.

Consider the vertical separatrices of $(M',\Sig')$, that is the vertical geodesic rays emanating from the points in $\Sig'$. We have two cases

\begin{itemize}
\item \ul{Case (a):} $\inter(s_0)$ intersects some vertical separatrices. For each vertical separatrix intersecting $\inter(s_0)$, consider the subsegment from its origin to its first intersection with $\inter(s_0)$. Pick  a  segment of minimal length $u_0$ in this family. Then there is an embedded triangle $\T'$  containing this vertical segment which is bordered by $s_0$ and two other saddle connections denoted by $s_1,s_2$. This triangle can be constructed as follows:  one can identify $s_0$ with a horizontal segment and $u_0$ with a vertical segment in the plane. Let $P_1,P_2$ denote the endpoints of the segment corresponding to $s_0$, and $P_0,Q_0$ denote the  endpoints of the segment corresponding to $u_0$, where $Q_0 \in \ol{P_1P_2}$. Let $\T'$ be the triangle with vertices $P_0,P_1,P_2$. Since  the  length of $u_0$ is minimal among the vertical segments from a point in $\Sig'$ to a point in $\inter(s_0)$, the developing map induces a map $\varphi: \T' \ra M'$ which is locally isometric.
By Lemma~\ref{lm:loc:isom:embed}, the image of $\varphi$ is an embedded triangle in $(M',\Sig')$. By construction $s_i=\varphi(\ol{P_0P_i}), \; i=1,2$.

Let $k_1,k_2$ be the slopes of $s_1$ and $s_2$ respectively. Note that we always have $k_1k_2<0$. Without loss of generality, we can assume that $k_1 > 0 > k_2$ (equivalently, $P_1$ is the left endpoint of $s_0$). We now claim that
\begin{equation}\label{eq:bound:k2-k3}
k_1-k_2 \leq \frac{2}{\sqrt{3}}.
\end{equation}
Let $x_i$ be the length of the segment $\ol{P_iQ_0}, \; i=1,2$, and $y$ be the length of the segment $\ol{P_0Q_0}$.
By definition, we have $k_1=x_1/y$ and $k_2=-x_2/y$. Hence
$$
k_1-k_2 =\frac{x_1+x_2}{y}=\frac{x}{y}
$$
where $x=x_1+x_2=|s_0|$.
By definition, we have $|s_0| \leq \min \{|s_1|,|s_2|\}$, therefore $x^2 \leq \min\{x_1^2+y^2, x_2^2+y^2\}$. But since $x=x_1+x_2$, we have $\min\{x_1,x_2\} \leq \frac{x}{2}$. Thus we have
$$
x^2 \leq \frac{x^2}{4} +y^2 \text{ which implies } \frac{x}{y} \leq \frac{2}{\sqrt{3}}
$$
which proves the claim.

Since we have $k_1 > 0 > k_2$ and $k_1-k_2 \leq 2/\sqrt{3} $, the radius of the half circle perpendicular to the real axis passing through $k_1$ and $k_2$ is at most $ \frac{1}{\sqrt{3}}<1$. Thus it  cannot separate $\imath$ and $\infty$. It follows in particular that $\imath$ is contained in the ideal triangle $\Delta_{\T'}$ with vertices $\{\infty, k_1,k_2\}$.

Since $(M,\Sig)=A^{-1}\cdot (M',\Sig')$, $\T=A^{-1}(\T')$ is an embedded triangle in $\Tria(M,\Sig)$. Note that the slope of the sides of $\T$ are $\{A^{-1}(\infty),A^{-1}(k_1),A^{-1}(k_2)\}$ (here we consider the usual action of $A^{-1}$ on $\R\cup\{\infty\}=\partial\Hh$) which means that $\{A^{-1}(\infty),A^{-1}(k_1),A^{-1}(k_2)\}$ are the vertices of the ideal triangle $\Delta_{\T}$, or equivalently $\Delta_\T=A^{-1}(\Delta_{\T'})$. Since $\imath$ is contained in the interior of $\Delta_{\T'}$ and $z=A^{-1}(\imath)$ by definition, we conclude that $z$ is contained in the interior of $\Delta_{\T}$.\\

\item \ul{Case (b):}  no-vertical saddle connection intersects $\inter(s_0)$. In this case $s_0$ is contained in a vertical cylinder $C$ of $(M',\Sig')$. We can realize the cylinder $C$ as the image of a rectangle $\Rec$ in the plane under a locally isometric mapping $\varphi: \Rec \ra M$ such that the restriction of $\varphi$  to $\inter(\Rec)$ is injective, and $\varphi$ maps both the bottom and top sides of $\Rec$ onto $s_0$.

Let $P_1$ and $P_2$ denote the left and right endpoints of the bottom side of $\Rec$ respectively.
There is a subsegment of the left side of $\Rec$, with $P_1$ being an endpoint, that is mapped to a vertical saddle connection $r_1$ in the boundary of $C$.
Similarly, there is a subsegment of the right side of $\Rec$, with $P_2$ being an endpoint, that is mapped to a vertical saddle connection $r_2$ in the boundary of $C$.
Let $P'_1$ and $P'_2$ denote the upper endpoints of $r_1$ and $r_2$ respectively.
Let $s_1$ (resp. $s_2$) denote the saddle connection that is the image of $\ol{P'_1P_2}$ (resp. of $\ol{P_1P'_2}$) under $\varphi$.
Remark that  $s_0,s_i,r_i$  bound an embedded triangle $\T'_i$, which is entirely contained in $C$, for $i=1,2$.

Let $k_i$ be the slope of $s_i$, then $k_1< 0 < k_2<0$. The vertices of the hyperbolic ideal triangle $\Delta_{\T'_i}$ are $\{\infty, 0,k_i\}$.
Since $k_1k_2 <0$, the vertical line from $\infty$ to $0$ is the common side of the ideal triangles $\Delta_{\T'_1}$ and $\Delta_{\T'_2}$.
Hence $\imath$ is contained in the interior of the ideal quadrilateral formed by  $\Delta_{\T'_1}$ and $\Delta_{\T'_2}$.
By the same arguments as the previous case, we see that there exists two embedded triangles $\T_1,\T_2$ in $\Tria(M,\Sig)$ such that $z$ is contained in a diagonal of the quadrilateral formed by $\Delta_{\T_1}$ and $\Delta_{\T_2}$.
\end{itemize}
\end{proof}

\subsection{Locally finite property of $\Lc$}
\begin{Lemma}\label{lm:loc:finite:geod}
 Let $K$ be a compact subset of $\Hh$. Then the set $\{\gamma \in \Lc, \; \gamma\cap K \neq \varnothing\}$ is finite.
\end{Lemma}
\begin{proof}
Assume that there is an infinite family of geodesics $\{\gamma_n\}_{n\in \N} \subset \Lc$ such that $\gamma_n\cap K\neq \varnothing$.
For each $n\in \N$, let $p_n,q_n \in \partial \Hh$ denote the endpoints of $\gamma_n$.
Since $\partial \Hh \simeq \mathbb{RP}^1$ is compact, by extracting a subsequence if necessary, we can suppose that $\lim_{n\to +\infty}p_n=p$ and  $\lim_{n\to -\infty}q_n=q$.

We first notice that $p\neq q$, this is because if $p=q$, then for any compact $K$, $\gamma_n\cap K=\varnothing$ when $n$ is large  enough.
Let $(p',q')$ be a pair of points in $\partial\Hh$ such that the geodesic joining $p'$ and $q'$ separates $p$ from $q$.
Using the action of $\PSL(2,\R)$, we can further assume that $p'=\infty$ and $q'=0$.
Without loss of generality, we can assume that $q < 0 < p$, and that all the geodesics in the family $\{\gamma_n\}_{n\in\N}$ cross the vertical half-line $\imath\R^+$.
By definition, each geodesic $\gamma_n$ is associated with an embedded triangle $\T_n$  in  $\Tria(M,\Sig)$.
This triangles has two sides $a_n, b_n$ such that the slope of $a_n$ is $p_n$ and the slope of $b_n$ is $q_n$.
Since $\gamma_n$ crosses $\imath\R^+$, we must have $q_n < 0 < p_n$.

Since the slope of $a_n$ belongs to $(0;+\infty)$, we can suppose that its period is $x_n+\imath y_n$, where both $x_n,y_n$ are positive real numbers.
Similarly, since the slope of $b_n$ belongs to $(-\infty;0)$, we can suppose that its slope is $-x'_n+\imath y'_n$, where $x'_n,y'_n$ are both positive.
As a consequence
$$
\Aa(\T_n)=\frac{1}{2}\left|\det\left(\begin{array}{cc} x_n & -x'_n \\ y_n & y'_n \end{array} \right)\right|=\frac{1}{2}(x_ny'_n+y_nx'_n).
$$
Since $\T_n$ is an embedded triangle, we must have $\Aa(T_n)< \Aa(M)=1$. Hence
\begin{equation}\label{eq:area:bound}
 x_ny'_n+y_nx'_n < 2.
\end{equation}
By definition $p_n=x_n/y_n$ and $q_n=-x'_n/y'_n$. Therefore
\begin{equation}\label{eq:slopes:diff}
 p_n-q_n=\frac{x_n}{y_n}+\frac{x'_n}{y'_n}=\frac{x_ny'_n+y_nx'_n}{y_ny'_n} < \frac{2}{y_ny'_n}.
\end{equation}
Since $p_n$ and $q_n$ converge to $p$ and $q$ respectively, there exists $\alpha >0$ such that $p_n-q_n >\alpha$ for all $n\in \N$. Thus \eqref{eq:slopes:diff} implies
\begin{equation}\label{eq:period:bound:1}
 y_ny'_n < \frac{2}{\alpha}
\end{equation}
We have two possibilities:
\begin{itemize}
 \item[$\bullet$] Case 1: both $y_n$ and $y'_n$ are bounded below by a constant $\beta>0$. Then it follows from \eqref{eq:period:bound:1} that both $y_n$ and $y'_n$ are bounded above by $\frac{2}{\alpha\beta}$.
 The inequality \eqref{eq:area:bound} implies that both $x_n$ and $x'_n$ are also bounded above by $\frac{2}{\beta}$. Thus the lengths of $a_n$ and $b_n$ are bounded by some constant $R$.  But it is a well known fact that there are only finitely many saddle connections on $(M,\Sig)$ that have length at most $R$. This contradiction shows that this case cannot occur.

 \item[$\bullet$] Case 2: either $\liminf_{n\to +\infty}y_n=0$, or $\liminf_{n\to \infty}y'_n=0$.
 Let us suppose that  $\liminf_{n\to +\infty}y_n=0$, the case $\liminf_{n\to \infty}y'_n=0$ follows from the same argument.
 By extracting a subsequence, we can assume that $\lim_{n\to \infty} y_n=0$.
 Since
 $$p=\lim_{n\to \infty}p_n=\lim_{n\to \infty}\frac{x_n}{y_n},$$
 it follows that $\lim_{n\to \infty}x_n=0$.
 In particular, $\{a_n\}_{n\in \N}$ is a  sequence of saddle connections that have lengths tend to $0$.
 Since such a sequence cannot exist, we get again a contradiction which proves the lemma.
\end{itemize}
\end{proof}

\subsection{Proof of Theorem~\ref{thm:ideal:tri:tess}}
\begin{proof}
Let $\hat{\Lc}$ denote the union of all the geodesics in $\Lc$.
It follows from Lemma~\ref{lm:loc:finite:geod} that $\hat{\Lc}$ is a closed subset of $\Hh$.
We need to show that every component of the set $\Hh\setminus\hat{\Lc}$ is a hyperbolic polygon with finitely many sides and area at most $\pi$.

Let $z$ be a point in $\Hh\setminus\hat{\Lc}$, and  $P_z$ the component of $\Hh\setminus\hat{\Lc}$ that contains $z$.
By Lemma~\ref{lm:ideal:tri:cover}, $z$ is contained in the interior of an ideal triangle $\Delta$ in $\Ic$.
Let $\Fc$ denote the set of geodesics in  $\Lc$ that intersect $\inter(\Delta)$.
Then $P_z$ is cut out by the boundary of $\Delta$ and the geodesics in $\Fc$.
In particular, $P_z$ is contained in $\Delta$. Therefore, we have $\Aa(P_z) \leq \Aa(\Delta)=\pi$.
It remains to show that $P_z$ has finitely many sides.

Let $p_1,p_2,p_3$ denote the vertices of $\Delta$. Let $V_i$ denote the intersection of $\Delta$ with a horoball tangent to $\partial\Hh$ at $p_i$.
We choose the horoballs such that $V_1,V_2,V_3$ are pairwise disjoint, and $z \not\in V_1\cup V_2 \cup V_3$.

Let $K:=\Delta\setminus\big(V_1\cup V_2 \cup V_3\big)$.
We split $\Fc$ into two subsets $\Fc'$ and $\Fc''$, where $\Fc'$ is the set of geodesics in $\Lc$ that intersect $K$.
Since $K$ is compact, it follows from Lemma~\ref{lm:loc:finite:geod} that $\Fc'$ is a finite set.
Let $P'_z$ denote the domain containing $z$ which is cut out by the sides of $\Delta$ and the geodesics in $\Fc'$.
Since the set $\Fc'$ is finite, $P'_z$ is a convex polygon with finitely many sides.

By definition, $\Fc''$ is the set of geodesics in $\Lc$ that intersect $\Delta$ but disjoint from $K$.
Denote by $\Fc''_i$ the set of geodesics in $\Fc''$ which intersect $V_i$.
Note that if $\gamma$ is a geodesic in $\Fc''_i$, then since $\gamma$ does not cross $K$, it does not intersect $V_j$ if $j\neq i$.
Therefore, $\gamma$ separates $p_i$ from $z$, and we have $\Fc'':=\Fc''_1\sqcup\Fc''_2\sqcup\Fc''_3$.

For $i=1,2,3$, if $\Fc''_i \neq \varnothing$ then we pick a geodesic $\gamma''_i \in \Fc''_i$.
Let $P''_z$ denote the component of $P'_z$ containing $z$ which is cut out by the geodesics $\gamma''_i$.
By construction, $P''_z$ is also a polygon with finitely many sides.

\begin{itemize}
\item[$\bullet$] If none of  $\{p_1,p_2,p_3\}$ is a vertex of $P''_z$, then $P''_z$ is a compact.
Since $P_z$ is cut out by the geodesics in $\Fc''$ that intersect $P''_z$, we conclude by Lemma~\ref{lm:loc:finite:geod}.

\item[$\bullet$] For $i=1,2,3$, if $p_i$ is a vertex of $P''_z$ then $\Fc''_i=\varnothing$, because otherwise we have a geodesic that separates $z$ from $p_i$.
In this case  we remove from $P''_z$ the intersection $P''_z\cap V_i$.
The remaining subset of $P''_z$ is denoted by $Q''_z$. Note that $Q''_z$ is compact.

We now claim that if $\gamma$ is a geodesic in $\Fc''$ which intersects $P''_z$, then $\gamma$ must intersect $Q''_z$.
Indeed, if $\gamma$ intersects $P''_z$ but not $Q''_z$ then it must intersect one of the set $P''_z\cap V_i$, where $\Fc''_i=\varnothing$.
But by definition, $\Fc''_i$ contains $\gamma$. Thus we have a contradiction.

The claim implies that $P_z$ is cut out from $P''_z$ by finitely many geodesics in $\Fc''$ (by Lemma~\ref{lm:loc:finite:geod}).
Therefore, $P_z$ has finitely many sides.
\end{itemize}
The invariance of $\Pi(M,\Sig)$ with respect to half-translation coverings follows from Lemma~\ref{lm:blced:cov:same:tria}.
\end{proof}
\subsection{Proof of Theorem~\ref{th:sq:tiled:Farey:tess}}\label{sec:prf:th:sq:tiled:Farey}
\begin{proof}
Since the tessellation associated with $(\C/\Z^2,\{0\})$ is the Farey tessellation, it follows from Lemma~\ref{lm:blced:cov:same:tria} that if $(M,\Sig)$ is a translation covering of $(\C/\Z^2,\{0\})$, then $\Pi(M,\Sig)$ is the Farey tessellation as well. 
Thus, we only need to show that if $\Pi(M,\Sig)$ is the Farey tessellation, then up to a rescaling the canonical orienting covering of $(M,\Sig)$ is a translation covering of $(\C/\Z^2,\{0\})$.

To simplify the arguments, we can assume that $M$ is a translation surface.
Recall that  every geodesic in the Farey tessellation joins a pair of points $(p/q,p'/q')\in (\Pb_\Q^1)^2$ such that $|pq'-qp'|=1$.

Consider an embedded triangle $\T$ in $\Tria$. Since the slopes of the sides of $\T$ are in $\Q\cup\{\infty\}$, there is an element $A$ of $\SL(2,\Z)$ such that the slopes of the triangle $A\cdot\T$ are $(0,1,\infty)$.
Replacing $(M,\Sig)$ by $A\cdot(M,\Sig)$, we can then assume that the ideal triangle $\Delta$ associated to $\T$ has vertices $\{0,1,\infty\}$.
Recall that the slopes of the horizontal and vertical directions are respectively $\infty$ and $0$.
Denote by $s_1,s_2,s_3$ the sides of $\T$, where  $s_1$ is vertical, the slope of $s_2$ is $1$, and $s_3$ is horizontal (see Figure~\ref{fig:sq:tiled:Farey:tess}).
Rescaling $M$ using $\R_+^*$, we can further assume that the lengths of $s_1$ and $s_3$ are both equal to $1$.
We will show that in this case $(M,\Sig)$ is a translation cover of the torus $(\C/\Z^2, \{0\})$.

We first show that $\T$ is contained in a horizontal cylinder. Indeed, if this is not the case then there would be a horizontal separatrix (that is a horizontal ray emanating from a point in $\Sig$) that intersects $\inter(s_1)$. It follows that there exists an embedded triangle bounded by $s_1$ and two other saddle connections $s'_2,s'_3$, where the slope $k'_2$ of $s'_2$ is negative, and the slope $k'_3$ of $s'_3$ is positive.
By definition, $\Pi(M,\Sig)$ contains the geodesic joining $k'_1$ and $k'_2$. But the Farey tessellation does not contain any such geodesic, hence we have a contradiction.

\begin{figure}[htb]
\begin{tikzpicture}[scale=0.45]
\draw (-4,4) -- (4,4) (-4,0) -- (4,0) (0,0) -- (0,8)-- (4,8) -- (4,0) (-4,4) -- (-4,0) -- (4,8) (0,0) -- (4,4);
\draw[dashed] (-6,4) -- (-4,4) (-6,0) -- (-4,0) (4,4) -- (7,4) (4,0) -- (7,0) (0,8) -- (0,10) (0,0) -- (0,-2) (4,8) -- (4,10) (4,0) -- (4,-2);

\foreach \x in {(-4,0),(-4,4), (0,0), (0,4), (0,8), (4,0), (4,4), (4,8)} \filldraw[fill=black] \x circle (3pt);
\draw (4.5,2) node {$\tiny s_1$} (1.5,2.1) node {$\tiny s_2$} (2,-0.5) node {$\tiny s_3$} (2,4.5) node {$\tiny s_4$} (-0.5,2) node {$\tiny s_5$} (-2,-0.5) node {$\tiny s_6$};

\draw (3,1)  node {$\tiny \T$} (1,3) node {$\tiny \T'$} (-1.5,1) node {$\tiny \T''$};
\draw (6,2) node {$\tiny C$} (2,9.5) node {$\tiny C'$};

\end{tikzpicture}
\caption{Tiling of a surface such that the associated tessellation is the Farey tessellation}
\label{fig:sq:tiled:Farey:tess}
\end{figure}

Let $C$ be the horizontal cylinder containing $\T$.
We can assume that $s_3$ is contained in the bottom border of $C$.
The top border of $C$ contains a horizontal saddle connection $s_4$ whose right endpoint coincides with the vertex of $\T$ opposite to $s_3$.
Observe that there is an embedded triangle $\T'$  bounded by $s_2,s_4$, and a third saddle connection denoted by $s_5$, which is contained in $C$.

Let $k_5$ be the slope of $s_5$. Since the slope of $s_2$ is $1$, and the slope of $s_4$ is $\infty$, the tessellation $\Pi(M,\Sig)$ contains a geodesic from $k_5$ to $1$ and a geodesic from $k_5$ to $\infty$. But $\Pi(M,\Sig)$ is the Farey tessellation, thus either $k_5=0$, or $k_5=2$. By construction, it is clear that $k_5 < 1$. Therefore, we have $k_5=0$, that is $s_5$ is vertical. This means that $S:=\T\cup\T'$ is an embedded unit square whose vertices are contained in $\Sig$.

If the square $S$ does not fill $C$,  then the bottom border of $C$ must contain saddle connection $s_6$ whose right endpoint coincide with the left endpoint of $s_3$. Consider the embedded triangle $\T''$ which is bounded by $s_6,s_5$, and a third saddle connection contained in $C$.
Repeating the arguments above, we see that the slope of the third side of $\T''$ is $1$.
By induction, we conclude that $C$ is filled by embedded unit squares with vertices in $\Sig$.

By the same argument, we also conclude  that $S$ is also contained in a vertical cylinder $C'$, which is filled out by unit squares with vertices in $\Sig$.
The same argument implies that every square in $C$ and $C'$ is contained in the intersection of a horizontal cylinder and a vertical cylinder, both of which are filled by unit squares with vertices in $\Sig$.
By connectedness, we deduce that $M$ is filled by such squares, which means that $(M,\Sig)$ is a translation cover of the torus $(\C/\Z^2,\{0\})$.
\end{proof}

\section{The graph of periodic directions}\label{sec:graph:per:dir}
Throughout this section, to simplify the discussion, $(M,\Sig)$ will be translation surface satisfying the topological Veech dichotomy. By Lemma~\ref{lm:blced:cov:same:tria}, the results in this section also hold in the case $(M,\Sig)$ is a half-translation surface.
We also normalize $(M,\Sig)$ using $\GL^+(2,\R)$ such that $\Aa(M)=1$.

In \textsection~\ref{sec:per:dir:graph}, we have defined a graph $\GG$ associated to $(M,\Sig)$. Recall that the vertices of $\GG$ are elements of $\Cc\sqcup \Ic$, and every edge of $\GG$ must join an element $\Delta$ of $\Ic$ to an element $k$ of $\Cc$ which is a vertex of $\Delta$. The length of every edge is set to be $\frac{1}{2}$. The distance on $\GG$ is denoted by $\dist$.
By construction, the graph  $\GG$ has the following properties:
\begin{itemize}
 \item[a)] Every vertex representing an element of $\Ic$ is the common endpoint of exactly $3$ (distinct) edges.


 \item[b)] Let $k,k'$ be two elements of $\Cc$. Then $\dist(k,k')=1$ if and only if there is an ideal triangle in $\Ic$ that contains $k$ and $k'$ as vertices.
 Equivalently, $\dist(k,k')=1$ if and only if there is a geodesic in $\Lc$ that joins $k$ and $k'$.
\end{itemize}

\subsection{Connectedness}\label{sec:G:connect}
For each $k \in \Cc$, let us denote by $\Sb(k)$ the set of saddle connections in the direction $k$.
The union of the saddle connections in $\Sb(k)$ will be denoted by $\hat{\Sb}(k)$.
Given $k,k'$ in $\Cc$, we define the {\em ordered intersection number} of the pair $(k,k')$ by
$$
\ii(k,k')=\min\{\#\left(\inter(s)\cap\hat{\Sb}(k')\right), \; s \in \Sb(k)\}.
$$
Note that the function $\ii$ is not symmetric, that is $\ii(k,k')$ and $\ii(k',k)$ might not be equal.
\begin{Proposition}\label{prop:G:dist:inters}
Let $k,k'$ be two directions in $\Cc$. Then
\begin{equation}\label{eq:G:dist:inters}
\dist(k,k')\leq \log_2(\min\{\ii(k,k'),\ii(k',k)\}+1)+1.
\end{equation}
In particular, the graph $\GG$ is connected.
\end{Proposition}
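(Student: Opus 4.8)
The plan is to establish the inequality $\dist(k,k')\leq \log_2(\min\{\ii(k,k'),\ii(k',k)\}+1)+1$ by induction on $N:=\min\{\ii(k,k'),\ii(k',k)\}$, and then observe that connectedness is an immediate corollary: for any two directions $k,k'\in\Cc(M,\Sig)$ the right-hand side is a finite number, so $k$ and $k'$ lie in the same connected component, and since every vertex of $\GG(M,\Sig)$ of type $\Ic$ is joined to a vertex of type $\Cc$ (indeed to three of them), the whole graph is connected. By symmetry of the statement in $k$ and $k'$ we may assume $N=\ii(k,k')$, realized by a saddle connection $s\in\Sb(M,\Sig,k)$ meeting $\hat\Sb(M,\Sig,k')$ in exactly $N$ points.

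\textbf{Base case.} If $N=0$, then some saddle connection $s$ in direction $k$ is disjoint (in its interior) from every saddle connection in direction $k'$. I would argue that $k$ and $k'$ are then joined by an edge-path of length $1$ in $\GG$, i.e.\ $\dist(k,k')\leq 1 = \log_2(0+1)+1$. The geometric input is that if $s$ avoids all of $\hat\Sb(M,\Sig,k')$, one can build an embedded triangle in $\Tria(M,\Sig)$ having a side in direction $k$ and a side in direction $k'$; concretely, applying an element of $\GL^+(2,\R)$ to put $k'$ vertical and $k$ horizontal, the horizontal saddle connection $s$ meets no vertical saddle connection, so by the topological Veech dichotomy (as in Case (b) of Lemma~\ref{lm:ideal:tri:cover}) $s$ lies in a vertical cylinder, and the triangle cut off inside that cylinder bordered by $s$ and a slanted saddle connection has its two non-horizontal sides — one can choose the construction so that one of them is forced to be vertical, or more carefully use the cylinder decomposition to produce a triangle with one horizontal and one vertical side. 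This triangle $\T$ gives an ideal triangle $\Delta_\T\in\Ic$ with both $k$ and $k'$ among its vertices, hence a path $k \,$--$\,\Delta_\T\,$--$\,k'$ of length $1$.

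\textbf{Inductive step.} Suppose $N\geq 1$ and the bound holds for all pairs with ordered intersection number $<N$. Fix $s\in\Sb(M,\Sig,k)$ realizing $\ii(k,k')=N$. The idea is a bisection: find a direction $k''\in\Cc(M,\Sig)$ with $\dist(k,k'')\leq 1$ and $\dist(k'',k')\leq 1$ is too strong; instead I would look for $k''$ with $\ii(k,k'')$ and $\ii(k'',k')$ both at most roughly $N/2$, so that by induction $\dist(k,k'')\leq \log_2(\lfloor N/2\rfloor+1)+1$ and similarly for $\dist(k'',k')$, and $\dist(k,k')\leq \dist(k,k'')+\dist(k'',k')$. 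To produce such a $k''$: consider the $N$ intersection points of $\inter(s)$ with saddle connections in direction $k'$, order them along $s$, and look at a ``middle'' sub-arc of $s$ together with a saddle connection in direction $k'$ hitting it; the embedded triangle spanned by a portion of $s$ and this transversal saddle connection yields a third direction $k''$ whose saddle connections, by convexity/the ordering, can cross $s$ at most about half the time and cross the relevant $k'$-saddle-connection few times as well. The combinatorics here is the crux: one must carefully bound $\ii(k'',k')$ (or $\ii(k',k'')$) and $\ii(k,k'')$ (or $\ii(k'',k)$) each by $\lceil (N-1)/2\rceil$ using the fact that a new saddle connection arising as a side of an embedded triangle inherits, by the triangle inequality for intersection numbers on a surface, a count controlled by splitting the $N$ intersection points into two groups.

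\textbf{Main obstacle.} The hard part will be the bisection estimate in the inductive step — choosing the intermediary direction $k''$ and proving the two resulting ordered intersection numbers are each at most half of $N$ (up to rounding), which is what makes the $\log_2$ appear. This requires a genuinely geometric argument with embedded triangles: that cutting along a saddle connection in direction $k''$ that ``splits'' the intersections of $s$ with $\hat\Sb(M,\Sig,k')$ produces triangles whose sides have controlled intersection with the direction-$k'$ saddle connections. Everything else — the base case via the topological Veech dichotomy and cylinder decomposition, and the final deduction of connectedness — is comparatively routine once the recursive inequality $\dist(k,k')\leq 1+\min_{k''}(\dist(k,k'')\text{ or }\dots)$ with the halving of intersection numbers is in hand.
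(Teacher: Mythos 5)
Your overall framing — induction on $N=\min\{\ii(k,k'),\ii(k',k)\}$, base case $N=0$ via the topological Veech dichotomy and the cylinder decomposition (this is exactly the content of the paper's Lemma~\ref{lm:G:inters:zero}), and connectedness as an immediate corollary — agrees with the paper, and the base case is essentially correct up to the harmless swap of horizontal and vertical.

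The inductive step, however, has a structural flaw that would prevent the induction from closing, quite apart from the unfinished geometric work you flag. The symmetric bisection you propose — find a middle direction $k''$ with $\ii(k,k'')$ and $\ii(k'',k')$ both roughly $N/2$, then add the two inductive bounds — yields
$\dist(k,k')\leq 2\bigl(\log_2(\lfloor N/2\rfloor+1)+1\bigr)$,
which grows like $2\log_2 N$, not $\log_2 N+1$; already for $N=2$ this gives $4 > \log_2 3 + 1\approx 2.58$. Halving the input symmetrically does not halve the output when both branches contribute their full inductive cost, so the recursion $T(N)\leq 2T(N/2)$ cannot produce a $\log_2$ bound. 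The paper's recursion is \emph{asymmetric}: keep $k'$ fixed and replace $k$ by a \emph{neighbor} of $k$ in $\GG$, so only one recursive call is incurred and the cost per step is exactly $1$. Concretely: take $s\in\Sb(M,\Sig,k)$ realizing $\ii(k,k')=n$; among the saddle connections in direction $k'$, pick one whose initial sub-arc $a'_1$ from its left endpoint $P_0$ to the first intersection with $\inter(s)$ has minimal length, and (as in Case (a) of Lemma~\ref{lm:ideal:tri:cover}) form the embedded triangle $\T\in\Tria(M,\Sig)$ containing $a'_1$, with $s$ as a side and $P_0$ as the opposite vertex. Let $s_1,s_2$ be the other two sides of $\T$ and $k_1,k_2$ their directions. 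Then $\dist(k,k_1)=\dist(k,k_2)=1$, since $\T$ gives an ideal triangle of $\Ic$ with vertices $k,k_1,k_2$. Moreover, inside $\T$ every $k'$-trajectory runs from $s_1\cup s_2\cup\{P_0\}$ over to $s$; the $n$ intersection points of $\inter(s)$ with $\hat{\Sb}(M,\Sig,k')$ thus correspond to those on $\inter(s_1)\cup\inter(s_2)$ plus the one exceptional trajectory $a'_1$ through the vertex $P_0$, so $\ii(k_1,k')+\ii(k_2,k')\leq n-1$ and $\min\{\ii(k_1,k'),\ii(k_2,k')\}<n/2$. Replacing $k$ by whichever of $k_1,k_2$ achieves the minimum gives the recursion $T(n)\leq 1+T\bigl(\lfloor(n-1)/2\rfloor\bigr)$, which does close to $\log_2(n+1)+1$. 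The decisive feature your bisection lacks is that the intermediate direction is \emph{adjacent} to $k$ in $\GG$, so the step costs only $1$ rather than a full recursive call.
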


We first show
\begin{Lemma}\label{lm:G:inters:zero}
If $\min\{\ii(k,k'),\ii(k',k)\}=0$ then $\dist(k,k')=1$.
\end{Lemma}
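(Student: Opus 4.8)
The statement to prove is Lemma~\ref{lm:G:inters:zero}: if $\min\{\ii(k,k'),\ii(k',k)\}=0$ then $\dist(k,k')=1$. By property~c) of the graph $\GG(M,\Sig)$ recalled above, it suffices to produce an embedded triangle $\T\in\Tria(M,\Sig)$ having two of its sides in directions $k$ and $k'$ (equivalently, a geodesic in $\Lc(M,\Sig)$ joining $k$ and $k'$). Without loss of generality assume $\ii(k,k')=0$, so there is a saddle connection $s$ in direction $k$ whose interior is disjoint from every saddle connection in direction $k'$, i.e.\ $\inter(s)\cap\hat{\Sb}(M,\Sig,k')=\varnothing$. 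The plan is to use $s$ together with the cylinder decomposition in direction $k'$ to build the desired triangle.

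\textbf{Key steps.} First I would normalize with $\GL^+(2,\R)$ so that $k'=\infty$ (vertical) and $k=0$ (horizontal); then $s$ is a horizontal saddle connection disjoint in its interior from all the vertical saddle connections. Since $(M,\Sig)$ satisfies the topological Veech dichotomy and there is a saddle connection in the vertical direction, the vertical foliation is periodic, so $M$ decomposes into vertical cylinders bounded by vertical saddle connections. Because $\inter(s)$ meets no vertical saddle connection, $s$ lies in the closure of a single vertical cylinder $C$; and since $s$ is horizontal (transverse to the vertical leaves), $s$ must in fact be a cross-cut of $C$ meeting each vertical leaf exactly once — more precisely, realizing $C$ as the image of a rectangle $\Rec$ in the plane under a locally isometric map that is an embedding on $\inter(\Rec)$, the preimage of $s$ is a horizontal segment spanning $\Rec$ from left side to right side. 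Next, as in the proof of Lemma~\ref{lm:ideal:tri:cover} Case~(b), I would take the vertical saddle connections $r_1,r_2$ on the left and right boundary components of $C$ incident to the endpoints of (the preimage of) $s$, and form the triangle $\T'$ with vertices the two endpoints of $s$ and the far endpoint of, say, $r_1$; its sides are $s$ (direction $k=0$), $r_1$ (direction $k'=\infty$), and a third saddle connection. By Lemma~\ref{lm:loc:isom:embed} this is an embedded triangle in $\Tria(M,\Sig)$. Pulling back by the normalizing matrix, we obtain an embedded triangle in $(M,\Sig)$ with two sides in directions $k$ and $k'$, hence $\dist(k,k')=1$ (and $\dist(k,k')\neq 0$ trivially since $k\neq k'$, which holds as $\ii$ is only defined for $k\neq k'$, or else the statement is vacuous/immediate).

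\textbf{Main obstacle.} The delicate point is the claim that a horizontal saddle connection whose interior avoids all vertical saddle connections is genuinely a full cross-cut of one vertical cylinder, meeting each vertical leaf through it exactly once and having its two endpoints on opposite boundary components. A priori $s$ could wind around inside $C$, or both endpoints could lie on the same boundary component; I would rule this out by noting that $s$ is a geodesic transverse to the (taut) vertical foliation, so its preimage in the developing rectangle $\Rec$ is a straight horizontal segment, and by minimality of the horizontal direction within... — actually more carefully: the developing map sends $s$ to a horizontal segment in $\R^2$, the cylinder $C$ develops to a horizontal strip, and a horizontal segment inside a horizontal strip whose endpoints are singularities and whose interior hits no other vertical saddle connection forces the endpoints onto the two vertical boundary arcs. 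The second place requiring care is ensuring the resulting $\T'$ has no vertex in the interior of a side and maps isometrically on the open triangle; this is exactly the content of Lemma~\ref{lm:loc:isom:embed} applied to the developing map restricted to $\T'$, provided one checks $\T'\subset\ol{\Rec}$, which follows because $\Rec$ is convex and contains the three relevant vertices. Once these two geometric facts are in place, the conclusion is immediate.
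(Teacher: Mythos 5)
Your proof is correct and takes essentially the same approach as the paper: place $s$ (in direction $k$, disjoint in its interior from all $k'$-saddle connections) inside a $k'$-cylinder $C$, then build an embedded triangle in $C$ with one side $s$ and another side a $k'$-saddle connection on $\partial C$, exactly as in Case~(b) of Lemma~\ref{lm:ideal:tri:cover}. One small inconsistency to fix: with the paper's slope convention $k_a=a_x/a_y$, the direction $0$ is \emph{vertical} and $\infty$ is \emph{horizontal} (the paper's proof indeed takes $s$ vertical inside a horizontal cylinder), so your identification of $k=0$ with horizontal and $k'=\infty$ with vertical swaps the paper's labels, though this is a harmless notational matter that doesn't affect the argument.
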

\begin{proof}
Without loss of generality, we can assume $k=0, k'=\infty$, and  that $\ii(k,k')=0$. This means that $k$ is the vertical direction, $k'$ is the horizontal direction, and there is a vertical saddle connection $s$ which is not crossed by any horizontal saddle connection.
By assumption, the horizontal direction is periodic. Since $s$ does not intersect any horizontal saddle connection in its interior, $s$ must be contained entirely in a horizontal cylinder $C$.
There exists an embedded triangle contained in $C$ whose boundary contains $s$ and a horizontal saddle connection in the boundary of $C$.
It follows that $\Ic$ contains an ideal hyperbolic triangle with vertices $(0,\infty,k'')$, which means that, as vertices of $\GG$, $0$ and $\infty$ are connected by a path of length one.
\end{proof}

\subsection*{Proof of Proposition~\ref{prop:G:dist:inters}}
\begin{proof}
Again, without loss of generality, we can assume that $k=0,k'=\infty$, and $\ii(k,k') \leq \ii(k',k)$.
Let $n=\ii(k,k')=\min\{\ii(k,k'),\ii(k',k)\}$. If $n=0$, then by Lemma~\ref{lm:G:inters:zero}, we have $\dist(0,\infty)=1$. Let us suppose that $n>0$.

Consider a vertical saddle connection $s$ such that $\#\{\inter(s)\cap \hat{\Sb}(\infty)\}=n$.
Let us denote the horizontal saddle connections of $M$ by $a_1,\dots,a_m$. We choose the orientation of those saddle connections to be from the left to the right.
For each $a_i$, let $r_i$ be the distance along $a_i$ from its left endpoint to its first intersection with $\inter(s)$.
If $a_i\cap\inter(s)=\vide$, we set $r_i=+\infty$.

Assume that $r_1=\min \{r_1,\dots,r_m\}$. Let $a'_1$ be the subsegment of $a_1$ between its left endpoint and its first intersection with $\inter(s)$.
Using the developing map of the flat metric structure, we see that  $a'_1$ is contained in an embedded triangle $\T$ bordered by $s$ and two other saddle connections $s_1,s_2$.
Let $k_1,k_2$ be the directions of $s_1$ and $s_2$ respectively. By definition there is an ideal hyperbolic triangle with vertices $(0,k_1,k_2)$ in $\Ic$.
Thus  we have $\dist(0,k_1)=\dist(0,k_2)=1$ as vertices of $\GG$.
We now observe that
$$
\#\{\inter(s)\cap\hat{\Sb}(\infty)\}=\#\{\inter(s_1)\cap\hat{\Sb}(\infty)\}+\#\{\inter(s_2)\cap\hat{\Sb}(\infty)\}-1
$$
Hence $\min\{\ii(k_1,k'),\ii(k_2,k')\} < \ii(k,k')/2$. Replacing $k$ by either $k_1$ or $k_2$,  by induction, we get the desired conclusion.
\end{proof}

\subsection{Action of the Veech group}
Since an affine automorphism must send saddle connections to  saddle connections and embedded triangles to embedded triangles, we have an action of the group $\G$ on $\GG$ by automorphisms.

\begin{Lemma}\label{lm:Gam:act:freely}
The group $\G$ acts freely on the set of edges of $\GG$.
\end{Lemma}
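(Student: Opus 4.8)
The plan is to show that if an affine automorphism $f \in \Aff^+(M,\Sig)$ fixes an edge of $\GG(M,\Sig)$, then its derivative $D(f)$ is trivial in $\PSL(2,\R)$, hence $f$ acts as the identity on $\GG(M,\Sig)$. An edge of $\GG(M,\Sig)$ is a pair $(k,\Delta)$ with $k \in \Cc(M,\Sig)$ a vertex of $\Delta \in \Ic(M,\Sig)$. If $g = D(f) \in \G(M,\Sig)$ fixes this edge, then $g$ fixes $\Delta$ as an ideal triangle and also fixes the vertex $k$ of $\Delta$. An element of $\PSL(2,\R)$ that preserves an ideal triangle permutes its three vertices; if in addition it fixes one of them, then it either fixes all three vertices (and is therefore the identity, since a M\"obius transformation fixing three points is trivial) or it transposes the other two. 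So the crux is to rule out the case where $g$ fixes one vertex of $\Delta$ and swaps the other two.

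To exclude the transposition case, I would argue on the flat surface side. Suppose $\Delta = \Delta_\T$ for an embedded triangle $\T \in \Tria(M,\Sig)$ with sides of slopes $k_1, k_2, k_3$, and suppose $g$ fixes $k_1$ and swaps $k_2$ and $k_3$. Such a $g$ is an involution in $\PSL(2,\R)$, i.e. an elliptic element of order $2$, conjugate to $z \mapsto -1/z$ after sending the fixed-point configuration to $\{0, 1, \infty\}$ with $g$ swapping $0$ and $\infty$; concretely, after normalizing by $\GL^+(2,\R)$ we may assume $D(f) = \pm\left(\begin{smallmatrix} 0 & -1 \\ 1 & 0 \end{smallmatrix}\right)$, the rotation by $\pi/2$. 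The key point: $f$ then maps the embedded triangle $\T$ to another embedded triangle $f(\T)$ whose ideal triangle is still $\Delta$ (same vertex set), but rotation by $\pi/2$ changes each slope $k_a = a_x/a_y$ to $-a_y/a_x = -1/k_a$. I would use this together with the area constraint: normalizing $\Aa(M) = 1$, an embedded triangle has area at most $1/2$, and the slopes of its sides, combined with the fact that the surface satisfies the topological Veech dichotomy (so the horizontal and vertical foliations are periodic with cylinders of width bounded below, as in the proof of Lemma~\ref{lm:finite:intersect}), constrain the periods of the sides. The obstruction is essentially parity/orientation: an affine automorphism is orientation preserving and the derivative $\pm\left(\begin{smallmatrix} 0 & -1 \\ 1 & 0 \end{smallmatrix}\right)$ has a fixed point in $\Hh$ (namely $\imath$) lying in the interior of $\Delta$, so $f$ would have a fixed point on $M$ realizing a contradiction with the structure of the triangle, or more directly: an order-two affine automorphism with derivative $-\Id$ exists only for surfaces with a hyperelliptic-type symmetry, while derivative equal to the rotation by $\pi/2$ forces the horizontal and vertical directions to have the same cylinder decomposition, which is incompatible with $s_0$ being a unique shortest saddle connection spanning the triangle.

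More cleanly, I would phrase the argument using the combinatorics of the triangle directly. If $g$ swaps $k_2, k_3$ and fixes $k_1$, then $f$ sends the side $s_1$ (of slope $k_1$) of $\T$ to a saddle connection of slope $k_1$ in $f(\T)$, and swaps the sides $s_2, s_3$. Since $\T$ is an embedded triangle and $f(\T)$ is as well, and both have the same vertex set of slopes, I claim $f(\T) = \T$: indeed, the side $s_1$ is determined up to the $\PSL(2,\R)$-action by being the side opposite to... — here one uses that in each ideal triangle there is a unique embedded triangle realizing a prescribed shortest side, by the minimality argument of Lemma~\ref{lm:ideal:tri:cover}. Then $f$ restricted to $\T$ is an isometry of the Euclidean triangle $\T$ fixing one vertex and swapping the other two, hence is the reflection fixing the altitude from that vertex; but $f$ is orientation preserving, a contradiction. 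Therefore only the case where $g$ fixes all three vertices of $\Delta$ survives, giving $g = \Id$ in $\PSL(2,\R)$, and since $f$ then acts trivially on all slopes it acts trivially on $\Cc(M,\Sig)$, on $\Ic(M,\Sig)$ (each ideal triangle is determined by its vertex set), and hence on $\GG(M,\Sig)$.

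The main obstacle I anticipate is making the ``$f(\T) = \T$'' step airtight — that is, showing that within a single ideal triangle $\Delta_\T$, the embedded Euclidean triangle is essentially unique (at least up to the finite symmetry group of $\Delta$), or alternatively finding a way to bypass uniqueness by arguing purely that an orientation-preserving Euclidean isometry of a triangle cannot swap two vertices while fixing the third. The second route is elementary and robust: any Euclidean isometry permuting the vertices of a non-degenerate triangle and swapping exactly two of them must be a reflection (orientation reversing), so if $f$ induced such a permutation on $\T$ it would contradict $f \in \Aff^+$. The only subtlety is that $f$ need not literally map $\T$ to itself; but if $D(f)$ fixes the ideal triangle $\Delta_\T$ then $f(\T)$ is an embedded triangle with the same ideal triangle, and one invokes that distinct embedded triangles cannot share all three side-slopes unless they are related by the (orientation-preserving) symmetries of $\Delta$ — which again are cyclic rotations, never transpositions. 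I would spend the bulk of the write-up nailing down this last assertion, probably via the area normalization and the explicit reconstruction of $\T$ from $s_0$ as in Lemma~\ref{lm:ideal:tri:cover}.
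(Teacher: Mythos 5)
You correctly reduce the problem to showing that $g\in\PSL(2,\R)$, having fixed $\Delta$ and one of its ideal vertices, cannot swap the other two vertices. But the route you then take is far more complicated than necessary, and in the end it has a genuine gap that you yourself flag: the assertion that $f(\T)=\T$. Your attempt to bypass it — ``distinct embedded triangles cannot share all three side-slopes unless they are related by the orientation-preserving symmetries of $\Delta$, which are cyclic rotations, never transpositions'' — is circular as you have phrased it, because the statement that the orientation-preserving symmetries of $\Delta$ are cyclic \emph{is} the whole content of the lemma, and you would be citing it to prove itself. Also, once $f(\T)=\T$ is granted, the restriction of $f$ to $\T$ is not a Euclidean isometry but merely an area-preserving affine map (derivative $\pm A$ with $A\in\SL(2,\R)$), so ``isometry'' is the wrong word, though the determinant argument still works for affine maps.

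The observation you needed to promote from a throwaway remark to \emph{the} proof is precisely what the paper does, and it requires no flat-surface geometry at all: $g$ acts on $\partial\Hh\simeq\RP^1$ as an orientation-preserving homeomorphism of the circle, hence preserves the cyclic order of any triple of boundary points. If $g$ permutes the three vertices of $\Delta$ and fixes one of them, preserving cyclic order forces it to fix the remaining two as well; a M\"obius transformation fixing three points of $\partial\Hh$ is $\pm\Id$. This one-line argument replaces your entire discussion of area bounds, rotation by $\pi/2$, cylinder decompositions, and the uniqueness of embedded triangles within a given ideal triangle, none of which is needed.
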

\begin{proof}
Let $g$ be a an element of $\G$. Assume that $g$ fixes an edge of $e$ of $\GG$.
Recall that by construction, one endpoint of $e$ corresponds to an ideal triangle $\Delta$ in $\Ic$, and the other endpoint corresponds to a vertex $k$ of  $\Delta$.
Since $g$ fixes $e$, it must fix $\Delta$ and $k$ (this is because $g$ preserves each of the sets $\Ic$ and $\Cc$).
In particular, $g$ permutes the vertices of $\Delta$.
But since $g$ preserves the orientation of $\RP^1\simeq \partial \Hh$, if it fixes one vertex of $\Delta$, it must fix all of its vertices.
Therefore we must have $g=\pm \Id$.
\end{proof}

Recall that  $\ol{\Cc},\ol{\Lc},\ol{\Ic},\ol{\GG}$ are the quotients of
$\allowbreak{\Cc,\Lc,\Ic,\GG}$ by $\G$ respectively.

\begin{Proposition}\label{prop:Vee:quot:finite}
 If $(M,\Sig)$ is a Veech surface then the quotients $\ol{\Cc}$, $\ol{\Lc}$, and $\ol{\Ic}$ are all finite.
 In particular, $\ol{\GG}$ is a finite graph.
\end{Proposition}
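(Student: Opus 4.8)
The plan is to deduce all four finiteness statements from the single assertion that $\ol{\Ic}(M,\Sig)$ is finite, and to prove the latter by reducing every embedded triangle, within its $\G(M,\Sig)$-orbit, to one whose sides are short.

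First I would observe that $\ol{\Cc}(M,\Sig)$ is finite. Since a Veech surface satisfies the Veech dichotomy \cite{Vee89}, every saddle connection lies in a periodic direction, so every element of $\Cc(M,\Sig)$ is a periodic direction; and on a lattice surface a periodic direction is always the fixed point of a parabolic element of the Veech group \cite{Vee89}. Hence $\Cc(M,\Sig)$ is contained in the set of parabolic fixed points of the lattice $\G(M,\Sig)$, which splits into finitely many $\G(M,\Sig)$-orbits. Fix orbit representatives $k^{(1)},\dots,k^{(r)}$ and, for each $a$, an element $A_a\in\SL(2,\R)$ with $A_a(k^{(a)})=\infty$; on the surface $M_a:=A_a\cdot M$ the horizontal direction is then periodic and parabolic, and I let $\psi_a\in\Aff^+(M_a,\Sig_a)$ be an affine multitwist in the horizontal direction, chosen to fix every horizontal saddle connection and to have derivative $\left(\begin{smallmatrix}1 & \ell_a\\ 0 & 1\end{smallmatrix}\right)$ with $\ell_a\neq 0$.

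To bound $\ol{\Ic}(M,\Sig)$: let $\T\in\Tria(M,\Sig)$. Some side of $\Delta_\T$ has a vertex in the orbit of some $k^{(a)}$, so after acting by an element of $\G(M,\Sig)$ and then by $A_a$ I may assume $\T$ is an embedded triangle of $M_a$ with a horizontal side $s_1$. As the horizontal direction of $M_a$ is periodic, $s_1$ is one of the finitely many horizontal saddle connections of $M_a$, so $|s_1|$ lies between two positive constants depending only on $M_a$. Writing $(x_2,y_2)$ and $(x_3,y_3)$ for the holonomies of the other two sides, we have $|x_2+x_3|=|s_1|$, $y_3=-y_2$, and $\Aa(\T)=\tfrac{1}{2}|s_1|\,|y_2|\le\Aa(M_a)$, so $|y_2|$ (hence $|y_3|$) is bounded. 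Finally, replacing $\T$ by $\psi_a^{n}(\T)$ for a suitable $n\in\Z$ — which fixes $s_1$ and sends $(x_2,y_2)$ to $(x_2+n\ell_a y_2,y_2)$ — I arrange $|x_2|<|\ell_a|\,|y_2|$, and then $|x_3|\le|s_1|+|x_2|$ is bounded as well. Consequently, for each $\G(M,\Sig)$-orbit in $\Ic(M,\Sig)$ there is an index $a$ such that this orbit is identified, under $A_a$, with a $\G(M_a,\Sig_a)$-orbit containing an embedded triangle of $M_a$ all of whose sides have length at most a constant $L$ independent of the orbit; since a fixed translation surface carries only finitely many saddle connections of length $\le L$, hence only finitely many embedded triangles with all sides of length $\le L$, it follows that $\ol{\Ic}(M,\Sig)$ is finite.

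The remaining statements are then formal. Every geodesic in $\Lc(M,\Sig)$ is a side, and every point of $\Cc(M,\Sig)$ a vertex, of some ideal triangle in $\Ic(M,\Sig)$; these incidences are $\G(M,\Sig)$-equivariant and each ideal triangle has exactly three sides and three vertices, so $\ol{\Lc}(M,\Sig)$ and $\ol{\Cc}(M,\Sig)$ are covered by at most $3\,\#\ol{\Ic}(M,\Sig)$ orbits each. The vertex set of $\ol{\GG}(M,\Sig)$ is $\ol{\Cc}(M,\Sig)\sqcup\ol{\Ic}(M,\Sig)$, which is finite, while an edge of $\GG(M,\Sig)$ is a pair formed by an ideal triangle and one of its three vertices, so the edge set of $\ol{\GG}(M,\Sig)$ maps onto $\ol{\Ic}(M,\Sig)$ with fibres of size at most three; hence $\ol{\GG}(M,\Sig)$ is a finite graph. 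I expect the crux to be the short-representative step: the area inequality controls, for free, the holonomy component of the two non-normalized sides transverse to the chosen direction, but the parallel component is a priori unbounded, and it is precisely the parabolic multitwist $\psi_a$ — available only because $k^{(a)}$ is a parabolic, not merely a periodic, direction — that brings it under control; pinning down the action of $\psi_a$ on the relevant holonomies, and in particular checking that a suitable affine representative fixes $s_1$, is where care is needed.
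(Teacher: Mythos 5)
Your proof is correct, and it reorganizes the paper's argument in a cleaner way while using the same underlying ingredients. The paper proceeds in three stages: $\ol{\Cc}$ finite (cusps of a lattice), then $\ol{\Lc}$ finite (for a fixed cusp $\infty$, an $\Lc$-geodesic out of $\infty$ corresponds to a triangle with a horizontal side and a side $s'$ whose $y$-coordinate is bounded by area and whose $x$-coordinate is bounded modulo the parabolic, so $|s'|$ is bounded), then $\ol{\Ic}$ finite (a geodesic in $\Lc$, normalized to the imaginary axis, lies in finitely many triangles, since such a triangle has one horizontal and one vertical side and those sides determine it). You instead prove $\ol{\Ic}$ finite in one step: for a representative triangle, send one vertex-direction to $\infty$ so that one side is horizontal (hence bounded), use the area inequality to bound the transverse holonomy component of the other two sides, and use the parabolic multitwist to bound the parallel component, so that every orbit contains a triangle with all sides short; then $\ol{\Cc}$, $\ol{\Lc}$, $\ol{\GG}$ finite follow formally because the side/vertex/edge maps from $\Ic$ are $\G$-equivariant and three-to-one. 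The computational core — area bound controls one component, parabolic controls the other, finitely many short saddle connections — is identical; what you gain is that the three quotients are all handled by a single normalization and a single length bound, instead of the paper's two separate normalizations (first $\infty$, then the imaginary axis). A minor point: you do not actually need $\psi_a$ to fix each horizontal saddle connection pointwise — any affine element with unipotent derivative suffices, since it merely permutes the finite set of horizontal saddle connections, and your length bounds pass through unchanged.
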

\begin{proof}
Since every $\G$-orbit in $\Cc$ is a cusp of the corresponding Teichm\"uller curve, we draw that the quotient $\ol{\Cc}$ is finite.

Let us show that $\ol{\Lc}$ is finite.  Let $k$ be an element of $\Cc$.
We can assume that $k=\infty$, that is $k$ is the horizontal direction.
Since $M$ is a Veech surface, it is horizontally periodic.
Moreover, there is a matrix $A = \left(\begin{smallmatrix} 1 & c \\ 0 & 1 \end{smallmatrix} \right) \in \G$ such that the stabilizer of $\infty$ in $\G$ equals $\{A^n, \, n \in \Z\}$.
Without loss of generality, we can assume that $c>0$.

Let $\delta$ be the length of the shortest horizontal saddle connections of $(M,\Sig)$. Consider a geodesic $\g \in \Lc$ joining $\infty$ to a point $k'\in \R$. By definition, there is an embedded triangle $\T \in \Tria$ whose boundary contains a horizontal saddle connection $s$, and a saddle connection $s'$ in direction $k'$.

We first notice that $|s| \geq \delta$. Let $x'+\imath y'$, with $y'>0$, be the period of $s'$.  Since $\Aa(\T) \leq \Aa(M)=1$, we have $y' \leq 2/|s| \leq 2/\delta$. There exists $n \in \Z$ such that $0\leq  x'+ncy' \leq cy' \leq 2c/\delta$. Thus, up to the action of $\{A^n, \; n \in \Z\}$, we can assume that $0 \leq x' \leq 2c/\delta$. It follows that  $|s'|$ is bounded by $\frac{2}{\delta}\sqrt{1+c^2} $, which implies that $s'$ belongs to a finite set. Hence, up to the action of $\{A^n, \; n \in \Z\}$, there are only finitely many geodesics in $\Lc$ that contains $\infty$ as an endpoint.
Since  $\ol{\Cc}$ is finite, we conclude that the set $\ol{\Lc}$ is also finite.

We now claim that any geodesic $\g$ in $\Lc$ is contained in finitely many ideal triangles in $\Ic$.
Without loss of generality, we can assume that $\g$ is the upper half of the imaginary axis.
Let $\Delta$ be an ideal triangle in $\Ic$ that contains $\g$. By definition, $\Delta$ corresponds to an embedded triangle $\T \in \Tria$ whose boundary contains a horizontal saddle connection $s$, and a vertical saddle connection $s'$. Note that the direction of the third side of $\T$ is determined  up to sign by $|s|/|s'|$. Since there are only finitely many horizontal (resp. vertical) saddle connections, such a triangle belongs to a finite set. Therefore, there are only finitely many elements of $\Ic$ that contain $\g$.

Pick a representative for each $\G$-orbit in $\Lc$, and let $\Lc^*$ be the resulting  finite family of geodesics in  $\Hh$.
By the previous claim, the sets of triangles in $\Ic$ that contain at least one element of $\Lc^*$ is finite.
Since every ideal triangle in $\Ic$ is mapped by an element of $\G$ to a triangle that contains a geodesic in the family $\Lc^*$, we conclude that $\ol{\Ic}$ is finite.
\end{proof}


\subsection{Proof of Theorem~\ref{thm:area:TC:bound}}
\begin{proof}
Pick a representative element for each $\G$-orbit in $\Ic$.
Denote by $\Ic^*$ the resulting family.
Let $\F\subset \Hh$ be the union of the ideal triangles in $\Ic^*$.
Let $\varphi: \Hh \to \Hh/\G$ denote the canonical projection.
By Lemma~\ref{lm:ideal:tri:cover}, we have that $\Hh=\cup_{h\in \G} h(\F)$.
Therefore, $\varphi(\F)$ covers $\Hh/\G$.
For each ideal triangle $\Delta$ in $\Ic^*$, we have
$$
\Aa(\varphi(\Delta))\leq \Aa(\Delta) =\pi.
$$
Therefore
$$
\Aa(\Hh/\G) \leq \sum_{\Delta \in \Ic^*}\Aa(\varphi(\Delta)) \leq \#\ol{\Ic}\cdot\pi
$$
which proves \eqref{eq:lattice:vol:bound}.
If $\#\ol{\Ic}$ is finite then $\Aa(\Hh/\G) < \infty$ (by \eqref{eq:lattice:vol:bound}), which means that $\G$ is a lattice in $\PSL(2,\R)$, and hence $(M,\Sig)$ is a Veech surface.
Conversely, if $(M,\Sigma)$ is a Veech surface, then it  follows from Proposition~\ref{prop:Vee:quot:finite} that $\#\ol{\Ic}$ is finite.
\end{proof}


\section{Geometry of the graph of periodic directions}\label{sec:hyperbolicity}
Our goal now is to give the proof of Theorem~\ref{thm:G:per:dir:prop}.
Throughout this section $(M,\Sig)$ will be a half-translation surface satisfying the topological Veech dichotomy, which needs not to be a Veech surface.
\subsection{Infinite diameter}
In this section we will show
\begin{Proposition}\label{prop:inf:diam}
The graph $\GG$ has infinite diameter.
\end{Proposition}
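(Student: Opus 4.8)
The plan is to show that distances in $\GG(M,\Sig)$ can be made arbitrarily large by exhibiting, for any $N$, a pair of periodic directions whose ordered intersection numbers are both enormous, and then combining this with a \emph{lower} bound for $\dist$ that complements the upper bound in Proposition~\ref{prop:G:dist:inters}. The key observation is a ``coarse Lipschitz'' estimate in the reverse direction: if $k,k'\in\Cc(M,\Sig)$ satisfy $\dist(k,k')\le n$, then there is a path $k=k_0,\Delta_1,k_1,\Delta_2,\dots,\Delta_n,k_n=k'$ through the graph, and at each step passing from $k_{i-1}$ to $k_i$ through a common ideal triangle $\Delta_i$ corresponds geometrically to replacing one side of an embedded triangle by another side of the \emph{same} embedded triangle. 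The first step, then, is to quantify how much the ordered intersection number with a fixed reference direction $k'$ can change in one such elementary move. Using the identity already established in the proof of Proposition~\ref{prop:G:dist:inters},
\[
\#\{\inter(s)\cap\hat{\Sb}(M,\Sig,k')\}=\#\{\inter(s_1)\cap\hat{\Sb}(M,\Sig,k')\}+\#\{\inter(s_2)\cap\hat{\Sb}(M,\Sig,k')\}-1,
\]
where $s$ is a side of an embedded triangle with the other two sides $s_1,s_2$ in directions $k_1,k_2$, one sees that passing through one triangle at most doubles (up to additive constants) the ordered intersection number with $k'$. Iterating, $\dist(k,k')\le n$ forces $\ii(k,k')$ to be bounded above by roughly $2^{n}$ times a constant depending only on the triangles near $k'$; combined with Proposition~\ref{prop:G:dist:inters} this pins down $\dist(k,k')$ up to additive and logarithmic error in terms of $\ii$.

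The second step is to produce directions with arbitrarily large ordered intersection number. For this I would use the flat geometry directly: since $(M,\Sig)$ satisfies the topological Veech dichotomy it has at least one periodic direction, say the horizontal one, decomposing $M$ into cylinders. Applying a suitable parabolic (multitwist) $T^N\in\Aff^+(M,\Sig)$ fixing the horizontal direction and then recording the image of a fixed vertical (or other transverse) saddle connection produces saddle connections whose intersection with the horizontal foliation grows linearly in $N$; more robustly, one can apply a hyperbolic element of $\SL(2,\R)$ (not necessarily in the Veech group) to rotate a fixed saddle connection into directions where it must cross many of the saddle connections of a fixed periodic direction, with the minimum over the direction class (the $\min$ in the definition of $\ii$) still tending to infinity because all saddle connections in a given primitive direction have comparable length and the area bound forces transverse saddle connections to be short hence numerous in the relevant strip. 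Either way, one gets a sequence $k_N$ with $\min\{\ii(k_N,k_0),\ii(k_0,k_N)\}\to\infty$.

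Combining the two steps: the lower bound from step one gives $\dist(k_0,k_N)\ge c\log_2\big(\min\{\ii(k_N,k_0),\ii(k_0,k_N)\}\big)-C\to\infty$, so $\GG(M,\Sig)$ has infinite diameter. The main obstacle I anticipate is making the lower-bound estimate of step one genuinely uniform: when we pass through an ideal triangle $\Delta_i$ at the step $k_{i-1}\leadsto k_i$, the relevant embedded triangle realizing this adjacency may be one of infinitely many (a saddle connection lies in infinitely many embedded triangles, property (b)), so one must argue that \emph{every} embedded triangle having $k_{i-1}$ and $k_i$ among its side-directions obeys the doubling bound with respect to $\hat{\Sb}(M,\Sig,k')$ — equivalently, that $\ii(\cdot,k')$ cannot jump by more than a bounded factor along any edge of $\GG(M,\Sig)$, regardless of which triangle is used. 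I expect this to follow from the additivity identity above together with the subadditivity $\#\{\inter(s)\cap\hat{\Sb}(M,\Sig,k')\}\le \#\{\inter(s_1)\cap\hat{\Sb}\}+\#\{\inter(s_2)\cap\hat{\Sb}\}$ holding for \emph{any} triangle with sides $s,s_1,s_2$, but verifying the uniform constant carefully — including the first and last steps, where one passes between a vertex of $\Cc$ and an ideal triangle rather than between two vertices of $\Cc$ — is where the real work lies.
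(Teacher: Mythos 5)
Your proposed route is genuinely different from the paper's, which goes through the arc-and-curve complex: the paper defines a coarse map $\Psi:\Cc(M,\Sig)\to\ACurv(S)$, shows (Lemma~\ref{lm:compare:leng:G:n:AC}) that $\dist_{\GG}$ is bounded below by half the distance in $\ACurv(S)$, and then invokes the infinite diameter of the curve complex (via Theorem~\ref{thm:Smillie:Vorobets}, Klarreich--Hamenst\"adt's description of $\partial_\infty\Curv(S)$, and Rees' theorem, packaged in Corollary~\ref{cor:short:curv:min:fol}). You instead try to stay entirely inside the flat geometry and deduce a lower bound $\dist(k,k')\ge c\log_2\bigl(\min\{\ii(k,k'),\ii(k',k)\}\bigr)-C$.

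That lower bound is false, and the gap is not one of ``verifying constants'' but of the underlying idea. Take the simplest case $(M,\Sig)=(\C/\Z^2,\{0\})$, whose graph $\GG$ is built from the Farey tessellation. With $k=1/q$ and $k'=\infty$, the only saddle connection of slope $1/q$ is the primitive one with period $1+\imath q$, which meets the horizontal saddle connection in exactly $q-1$ interior points, and symmetrically $\ii(\infty,1/q)=q-1$; so $\min\{\ii(1/q,\infty),\ii(\infty,1/q)\}=q-1\to\infty$. But $1/q$ is a Farey neighbor of $0$, which is a Farey neighbor of $\infty$, so $\dist_{\GG}(1/q,\infty)=2$ for every $q\ge 2$. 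Thus no estimate of the form $\dist\ge c\log(\min\ii)-C$ can hold, and your sequence $k_N$ with $\min\{\ii(k_N,k_0),\ii(k_0,k_N)\}\to\infty$ does \emph{not} force $\dist(k_0,k_N)\to\infty$. The source of the error is the ``doubling'' heuristic: the additivity identity you quote is established in the proof of Proposition~\ref{prop:G:dist:inters} only for the carefully chosen triangle whose apex lies on a shortest initial segment of a $k'$-saddle connection hitting $\inter(s)$; for a general embedded triangle the relation between the three side counts depends on which vertex lies at an intermediate height and whether that vertex sits on a $k'$-leaf, so one side's count can jump by an arbitrary amount relative to another's. In the torus example above, the triangle realizing the edge $\{1/q,0\}$ has sides with $k'$-intersection counts $0$, $q-2$, $q-1$, which visibly breaks any bounded-jump claim. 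This is precisely the well-known phenomenon that, in curve-complex-type graphs, distance is bounded \emph{above} by $\log$ of intersection number but not below; getting an effective lower bound on distance is a hard problem (and is exactly why the paper reaches for the Masur--Minsky / Klarreich / Hamenst\"adt machinery rather than an intersection-number argument). So the approach as written cannot be repaired by sharpening constants; it needs to be replaced, e.g.\ by the paper's argument via $\ACurv(S)$.

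A secondary remark: the second half of your argument (producing a sequence $k_N$ with $\min\{\ii(k_N,k_0),\ii(k_0,k_N)\}\to\infty$) is fine, but since the lower bound is unavailable, producing large intersection numbers does not by itself move points far apart in $\GG$.
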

To prove Proposition~\ref{prop:inf:diam}, we will make use of the connection between $\GG$ and the arc and curve graph on a surface with marked points.

\subsubsection{Arc and curve graphs} Let $S$ be a topological surface homeomorphic to $M \setm\Sig$. We will consider $S$ as a compact surface $\hat{S}$ with a finite set $V$ removed, points in $V$ are called punctures.   A simple closed curve in $S$ is {\em non-essential} if it is either homotopic to the constant loop, or bounds a disc that contains only one puncture. A simple arc in $S$ is a continuous map $\alpha : I \ra \hat{S}$,  where $I \subset \R$ is a compact interval, such that the restriction of $\alpha$ to $\inter(I)$ is an embedding and $\alpha(I)\cap V=\alpha(\partial I)$.
A simple arc is {\em non-essential} if it is homotopic relative to its endpoints to the constant map by a homotopy $H: I\times [0,1] \ra S$ such that for all $(t,s)\in \inter(I)\times[0,1)$, $H(t,s)\in S$. A simple closed curve or a simple arc is said to be {\em essential} if it is not non-essential.

Define the {\em curve graph} $\Curv(S)$  to be the  graph whose vertices are homotopy classes of essential simple closed curves in $S$, and there is an edge between two vertices if and only if the corresponding simple closed curves can be realized disjointly. Similarly, define the {\em arc and curve graph} $\ACurv(S)$ to be the graph whose vertices are homotopy classes of essential simple arcs and simple curves on $S$, and there is an edge between two vertices if and only if they can be realized disjointly in $S$.
We define the length of every edge of $\Curv(S)$ and of $\ACurv(S)$ to be one.
Denote by $\dd_{\rm AC}$ and $\dd_{\rm C}$ the distance in $\ACurv(S)$ and in $\Curv(S)$ respectively.
By construction, we have a natural embedding from $\Curv(C)$ into $\ACurv(S)$.

Since the graph of periodic directions $\GG$ is unchanged if we replace $(M,\Sig)$ by a half-translation covering, we can suppose that genus of $M$ (and hence the genus of $S$) is at least two.
We then have the following well known facts (see~\cite{MasSch13})
\begin{itemize}
 \item[$\bullet$] the  graphs $\Curv(S)$ and $\ACurv(S)$  are connected and have infinite diameter,

 \item[$\bullet$] the graphs $\Curv(S)$ and $\ACurv(S)$ are quasi-isometric.
\end{itemize}

A geodesic metric space  is said to be {\em Gromov hyperbolic} if there is a constant $\delta >0$ such that for any triple of points $(x,y,z)$ in this space, any geodesic from $x$ to $y$ is contained in the $\delta$-neighborhood of the union of a geodesic from $x$ to $z$ and a geodesic from $y$ to $z$.
By celebrated result of Masur-Minsky~\cite{MasMin99}, we know that $\Curv(S)$ (and hence $\ACurv(S)$) is Gromov hyperbolic.

Recall that a measured foliation on $S$ is by definition a measured foliation on $\hat{S}$ which has $k$-pronged singularities with $k\geq 3$ in $S$, and $k$-pronged singularities  with $k\geq 1$ at points in $V$ (see~\cite{Mosher:preprint}). In other word, measured foliations on $S$ are measured foliations on $\hat{S}$ that are modeled  by the foliations of  meromorphic quadratic differentials with at most simple poles.

A measured foliation is {\em minimal}  if all of its leaves are either dense in $\hat{S}$ or join two singularities, and there is no cycle of leaves.
In~\cite{Kla99}, Klarreich shows that the boundary at infinity $\partial_\infty\Curv(S)$ of $\Curv(S)$ can be identified with the space of topological minimal foliations on $S$.
Moreover, we have (see~\cite[Th. 1.4]{Kla99})

\begin{Theorem}[Klarreich]\label{th:Klar:conv:seq:infty}
 Given a minimal foliation $\mu$ on $S$, a sequence $(c_i)_{i\in \N} \subset \Curv(S)^{(0)}$ converges to the point in $\partial_\infty\Curv(S)$ represented by $\mu$
 if and only if for every accumulation point $\nu$ of $(c_i)_{i\in \N}$ in the space of projective measured foliations, $\nu$ is topologically equivalent to $\mu$
\end{Theorem}

Let $\iota$ denote the intersection number function on the space of measured foliations.
The following result is proved in~\cite{Rees81}.

\begin{Theorem}\label{thm:min:fol}
If $\lambda$ is a minimal measured foliation on $S$, then a measured lamination $\mu$ is topologically equivalent (Whitehead equivalent) to $\lambda$ if and only if $\iota(\lambda,\mu)=0$.
\end{Theorem}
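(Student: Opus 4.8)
The plan is to prove the two implications separately, in each case passing from measured foliations to measured geodesic laminations by fixing a complete finite-volume hyperbolic metric on $S$ and using the standard dictionary between the Whitehead class of a measured foliation and its associated measured geodesic lamination. The ``only if'' direction I would dispatch first and quickly: if $\mu$ is Whitehead equivalent to $\lambda$, then, up to isotopy, $\lambda$ and $\mu$ are supported on one and the same geodesic lamination $L$; two measured laminations supported on $L$ have coincident leaves, so there are no transverse intersections between them and hence $\iota(\lambda,\mu)=0$.

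For the ``if'' direction, which is the real content, I would proceed in the following steps. Assume $\iota(\lambda,\mu)=0$, keeping in mind the convention that a measured lamination is nonzero (this excludes the trivial counterexample $\mu=0$). Let $L$ and $N$ be the geodesic laminations underlying $\lambda$ and $\mu$. \emph{Step 1:} translate the minimality hypothesis on $\lambda$ --- every non-singular leaf dense in all of $\hat S$ --- into the statement that $L$ is a minimal geodesic lamination which \emph{fills} $S$, so that every complementary region of $L$ is a finite-sided, possibly once-punctured, ideal polygon; in particular no such region contains an essential simple closed geodesic or a geodesic sublamination with uncountably many leaves. \emph{Step 2:} use $\iota(\lambda,\mu)=0$ to see that $L$ and $N$ have no transverse intersection, so that $L\cup N$ is again a geodesic lamination. \emph{Step 3:} invoke the structure of the support of a measured lamination to write $N=N_1\sqcup\cdots\sqcup N_r$ as a finite disjoint union of minimal laminations, each $N_j$ being either a simple closed geodesic or a minimal lamination with uncountably many leaves. \emph{Step 4:} for each $j$, note that since $N_j$ and the minimal lamination $L$ have no transverse intersection, either $N_j=L$ or $N_j\cap L=\vide$; the latter forces $N_j$ into a complementary region of $L$, which is impossible by Step 1 (a simple closed geodesic is automatically essential, and an uncountable minimal lamination does not embed in an ideal polygon). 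Hence $N_j=L$ for all $j$, so $N=L$, and this is exactly the assertion that $\mu$ is topologically (Whitehead) equivalent to $\lambda$.

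The step I expect to be the main obstacle --- really the only place where anything happens --- is Step 1: extracting from ``minimal'' the filling property and the resulting control on the complementary regions of $L$. This is what rules out the two ways $\mu$ could fail to be equivalent to $\lambda$ while still satisfying $\iota(\lambda,\mu)=0$, namely an annular component (a weighted simple closed curve disjoint from $L$) or a minimal component supported on a proper subsurface disjoint from $L$; without the filling hypothesis the statement is false. The ingredients I would quote rather than reprove are: the correspondence $\mathcal{MF}(S)\leftrightarrow\mathcal{ML}(S)$ together with the matching of Whitehead classes with supports, the fact that a minimal foliation corresponds to a minimal lamination whose support fills $S$, the classification of complementary regions of a filling geodesic lamination as finite ideal polygons, and the decomposition of the support of a measured lamination into finitely many minimal pieces.
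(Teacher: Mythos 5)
The paper offers no proof of this statement: it is cited as a known result of Rees \cite{Rees81}, so there is no ``paper's own proof'' to compare against. Your argument is the standard one via geodesic laminations and, as far as I can tell, is correct in outline. A few places deserve slightly more care than you give them, though they are all closeable with standard facts. In Step~4 the dichotomy ``$N_j=L$ or $N_j\cap L=\vide$'' needs a sentence: since $L$ and $N_j$ do not cross transversally, any common point lies on a shared leaf $\ell$; minimality of $L$ gives $\ol{\ell}=L$ and minimality of $N_j$ gives $\ol{\ell}=N_j$, hence $N_j=L$. Also, when $N_j\cap L=\vide$ you need to say why $N_j$ sits inside a \emph{single} complementary region of $L$ (because a minimal lamination is connected) and why that is impossible: a leaf of $N_j$ in a complementary region $R$ either accumulates on $\partial R\subset L$ (contradicting $N_j\cap L=\vide$ since $N_j$ is closed) or is compact, and an ideal polygon or once-punctured ideal polygon carries no essential simple closed geodesic. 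Finally, your explicit exclusion of $\mu=0$ is the right move; without it the statement is literally false, and the implicit convention that measured laminations are nonzero is being used in the paper as well (every $\mu$ that arises there is supported on a nontrivial curve or foliation). None of these is a genuine gap --- they are exactly the ``ingredients I would quote rather than reprove'' you list --- but the proof as written would not stand alone without them being invoked precisely. Compared to Rees's original treatment, which works directly with measured foliations and their dynamics, your route through the geometry of geodesic laminations (minimality, filling, complementary regions) is shorter and more in keeping with how the result is typically used in the curve-complex literature (e.g.\ Klarreich, Hamenst\"adt), at the cost of importing the full foliation--lamination dictionary as a black box.
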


For our purpose, we will also need the following result which is due to Smillie~\cite{Smi00} (see also~\cite{Vo05}).
\begin{Theorem}\label{thm:Smillie:Vorobets}
 Given any stratum of translation surfaces, there is a constant $K>0$ such that on any surface of area one in this stratum, there exists a cylinder of width bounded below by $K$.
\end{Theorem}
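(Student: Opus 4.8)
The plan is to deduce this from a compactness property of the finite-volume area-one locus of the stratum, the only non-formal input being an analysis of the thin part of moduli space. Fix the stratum $\mathcal{H}$ and let $\mathcal{H}_1\subset\mathcal{H}$ be the set of area-one surfaces. It suffices to produce, uniformly over $\mathcal{H}_1$, a single cylinder $C$ whose circumference $c(C)$ is bounded above and whose area $\Aa(C)$ is bounded below, both in terms of $\mathcal{H}$ only: the width of such a cylinder satisfies $h(C)=\Aa(C)/c(C)\ge a_0(\mathcal{H})/C_1(\mathcal{H})=:K$. I would first record the standard systole bound: every $X\in\mathcal{H}_1$ carries a saddle connection of length at most $L(\mathcal{H})$, since otherwise the embedded flat cones of radius $L/2$ about the points of $\Sig$ would already have total area greater than $1$ for $L$ large.

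Next, set $W\colon\mathcal{H}_1\to(0,+\infty]$, $W(X)=\sup\{\,h(C):C\text{ a maximal cylinder on }X\,\}$. Every translation surface contains a cylinder (Masur, see \cite{MaTa02}), so $W>0$ everywhere; and one checks that $W$ is lower semicontinuous (any maximal cylinder of $X$ deforms to maximal cylinders of nearby surfaces with close width). Writing $\mathcal{H}_1=\bigcup_n\mathcal{K}_n$ with $\mathcal{K}_n=\{X:\Sys(X)\ge 1/n\}$ compact (this thick part of the stratum is compact), lower semicontinuity and positivity give $\inf_{\mathcal{K}_n}W>0$ for each $n$. Thus the whole statement reduces to showing that $W$ stays bounded away from $0$ as $\Sys(X)\to 0$: \emph{a surface carrying a very short saddle connection still carries a cylinder of definite width.}

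This last point is the substantive one, and is exactly what Smillie~\cite{Smi00} and Vorobets~\cite{Vo05} establish. I would argue along the following lines: take a shortest saddle connection $\gamma$ on $X$, normalize by a rotation so that $\gamma$ is horizontal, and run Masur's construction of a cylinder out of $\gamma$ --- flow $\gamma$ in the perpendicular direction and stop at the first failure of embeddedness. This returns either a saddle connection strictly shorter than $\gamma$ (impossible, $\gamma$ being globally shortest) or, after finitely many iterations (saddle-connection lengths are discrete and bounded below by $\Sys(X)$), a cylinder $C$ of circumference bounded in terms of $|\gamma|\le L(\mathcal{H})$, hence $c(C)\le C_1(\mathcal{H})$. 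The genuinely delicate part is to bound $\Aa(C)$ from below; here one uses that $\gamma$ is globally shortest to prevent the area of $X$ from being spread thinly transverse to $C$: one controls how $\Aa(X)=1$ distributes among the cylinders and minimal components of the decomposition of $X$ in the direction of $\gamma$, and rules out the ``all cylinders thin'' alternative by an area-versus-complexity count bounded in terms of the genus and $|\Sig|$. Carrying out these estimates is the main obstacle, and is where I would follow \cite{Smi00,Vo05}. Combining $c(C)\le C_1$ with $\Aa(C)\ge a_0$ yields a cylinder of width $\ge a_0/C_1$, which together with the compactness step above proves the theorem.
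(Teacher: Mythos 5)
The paper does not prove this theorem; it records it as a known result of Smillie~\cite{Smi00} (see also Vorobets~\cite{Vo05}) and uses it as a black box. Your proposal ultimately does the same thing, since you explicitly defer the decisive estimate --- the lower bound on the area of the cylinder you construct --- to those same references. So there is no paper proof to compare against, and both you and the paper treat this as a citation. The reduction you set up is correct and matches how the result is actually obtained: if one can produce, uniformly over the stratum, a cylinder $C$ with $c(C)\le C_1(\mathcal{H})$ and $\Aa(C)\ge a_0(\mathcal{H})$, then its width is $h(C)=\Aa(C)/c(C)\ge a_0/C_1$. The cone-area systole bound you state at the outset is also fine.

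However, the steps you do fill in have real problems. The compactness detour is superfluous: your own systole bound already shows that every area-one surface in $\mathcal{H}_1$ carries a saddle connection of length $\le L(\mathcal{H})$, so a construction starting from the shortest saddle connection applies uniformly and there is nothing to gain by separately handling the thick part via lower semicontinuity. More seriously, the Masur-style dichotomy you invoke is not correct as stated. Flowing the interior of a shortest horizontal $\gamma$ (length $\ell$) vertically and stopping at the first obstruction does not return ``either a strictly shorter saddle connection or a cylinder.'' If a singularity appears on the top edge of the immersed strip at height $T$ and horizontal offset $a$ from an endpoint of $\gamma$, the new saddle connections have lengths $\sqrt{a^2+T^2}$ and $\sqrt{(\ell-a)^2+T^2}$; these are guaranteed to beat $\ell$ only when $T<\ell\sqrt{3}/2$, and for $T\ge\ell$ they never do. So a singularity can appear at height $T\ge\ell$ producing neither a shorter saddle connection nor a cylinder, and your dichotomy fails exactly there. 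There is likewise no reason that a cylinder produced by such a flow has circumference controlled by $|\gamma|$ without additional argument, and the phrase ``after finitely many iterations'' has no stated monovariant telling you what quantity decreases. These are precisely the points where Smillie's and Vorobets' arguments do genuine work; it is perfectly reasonable to cite them (the paper does), but your sketch as written does not constitute an independent proof and should be labeled as an outline of theirs.
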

Since the area of a cylinder is equal to the product of its circumference and its width, if the surface has area one and the width of the cylinder is bounded below by $K$, then its circumference is at most $1/K$.
As a consequence of Theorem~\ref{th:Klar:conv:seq:infty}, we get the following (see also~\cite[Prop. 2.4]{Ham10})
\begin{Corollary}\label{cor:short:curv:min:fol}
For $t \in \R$, let $M_t$ denote the surface $a_t\cdot M$, where $a_t=\left(\begin{smallmatrix} e^t & 0 \\ 0 & e^{-t} \end{smallmatrix} \right)$. For $n \in \{0,1,\dots\}$, let $c_n$ be a regular geodesic on $M_n$ of length at most $1/K$. The existence of such a geodesic is guaranteed by Theorem~\ref{thm:Smillie:Vorobets}.
We consider $(c_n)$ as a sequence of vertices of $\Curv(S)$ via a homeomorphism $f: \hat{S} \ra M$ sending $V$ onto $\Sig$.

Assume that the vertical foliation $\mu$ on $M$ is minimal. Then the sequence $(c_n)$ defines a point in  $\partial_\infty\Curv(S)$. In particular,
$$
\lim_{n \ra \infty} \dd_{\rm C}(c_0, c_n)=\infty.
$$
\end{Corollary}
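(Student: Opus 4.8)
The plan is to verify that the sequence $(c_n)$ satisfies the criterion of Theorem~\ref{thm:Ham:conv:seq:infty}, namely that $(c_n)$ converges in the coarse Hausdorff topology to the vertical foliation $\mu$. Once this is established, Klarreich--Hamenst\"adt gives that $(c_n)$ is admissible and defines the point of $\partial_\infty\Curv(S)$ represented by $\mu$; since any sequence of vertices converging to a point of $\partial_\infty\Curv(S)$ must leave every bounded set, we get $\dd_{\rm C}(c_0,c_n) \to \infty$, which is the final assertion. Note that $\mu$ being minimal guarantees, via the identification $\mathcal{MF}(S)\simeq\mathcal{ML}(S)$, that it corresponds to a minimal lamination filling up $S$, so it is indeed a point of $\partial_\infty\Curv(S)$, and Theorem~\ref{thm:min:fol} tells us that the only measured laminations with zero intersection number with $\mu$ are those topologically equivalent to it.

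Now I would carry out the coarse-Hausdorff convergence. Let $\lambda$ be any accumulation point (in the Hausdorff topology on closed subsets of $S$) of the sequence $(c_n)$; I must show $\lambda$ contains $\mu$ as a sublamination, equivalently (by Theorem~\ref{thm:min:fol} and minimality of $\mu$) that $\iota(\mu, \lambda)=0$ after passing to a measured lamination supported on $\lambda$. The key geometric input is that $c_n$ is a closed geodesic on $M_n = a_n\cdot M$ of flat length at most $1/K$. Transporting back to $M$ by $a_n^{-1}=\mathrm{diag}(e^{-n},e^{n})$, the curve $c_n$ becomes a closed geodesic (or concatenation of saddle connections) whose holonomy vector $(x_n,y_n)$ satisfies $|(e^{-n}x_n, e^{n}y_n)|\le 1/K$; hence $|y_n|\le e^{-n}/K \to 0$ while $|x_n| \le e^{n}/K$. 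Thus the ``vertical component'' of $c_n$ on $M$ tends to $0$: more precisely, the flat-geometric intersection of $c_n$ with the vertical foliation, which is controlled by $|y_n|$ times the appropriate combinatorial factor, can be made to go to zero relative to a fixed transversal, so that in the limit $\iota(\mu, c_n) \to 0$. Since intersection number is continuous and $c_n$ (suitably rescaled projectively) accumulates on a lamination supported on $\lambda$, we obtain $\iota(\mu,\lambda')=0$ for the corresponding measured lamination $\lambda'$, forcing $\lambda$ to contain $\mu$.

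The main obstacle I anticipate is making the last paragraph's ``vertical component tends to zero'' argument genuinely rigorous in the coarse Hausdorff topology, as opposed to merely in the projective measured foliation space. One clean route is to argue directly with measured foliations and invoke Remark~\ref{rk:converge:seq:MF:GL}: it suffices to show that every accumulation point $\nu$ of $(c_n)$ in the space of projective measured foliations is topologically equivalent to $\mu$. For this, normalize representatives $\tilde c_n$ of $c_n$ to unit total mass; the bound $|y_n|\le e^{-n}/K$ shows that the transverse measure of $\tilde c_n$ with respect to the horizontal foliation (which measures the vertical extent) tends to $0$ compared with the transverse measure with respect to the vertical foliation (which blows up like $|x_n|$). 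Hence $\iota(\mu, \tilde c_n / \|\tilde c_n\|) \to 0$, so any limit $\nu$ has $\iota(\mu,\nu)=0$, and Theorem~\ref{thm:min:fol} gives that $\nu$ is topologically equivalent to $\mu$. Applying Remark~\ref{rk:converge:seq:MF:GL} and then Theorem~\ref{thm:Ham:conv:seq:infty} completes the argument. A minor point to address along the way is that $c_n$ is asserted to be a \emph{regular} geodesic (a genuine simple closed curve rather than a union of saddle connections) on $M_n$, which is exactly what is needed for $c_n$ to define a vertex of $\Curv(S)$; this is where Theorem~\ref{thm:Smillie:Vorobets} is used, since the core curve of a cylinder of definite width is such a geodesic.
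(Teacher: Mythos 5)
Your overall strategy is the same as the paper's: show that $\iota(\mu,c_n)\to 0$, invoke Theorem~\ref{thm:min:fol} to conclude that any accumulation point $\nu$ of $(c_n)$ in the space of projective measured foliations is topologically equivalent to $\mu$, and then apply Theorem~\ref{thm:Ham:conv:seq:infty} (via Remark~\ref{rk:converge:seq:MF:GL}). The last sentence of the statement is handled the same way in both. So the approach matches.

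However, the holonomy bookkeeping in your second and third paragraphs is inverted, and if one followed it literally the argument would break. Since $a_n=\mathrm{diag}(e^n,e^{-n})$ \emph{expands} the horizontal direction, a curve $c_n$ of $M_n$--length $\le 1/K$ has, when viewed on $M=a_n^{-1}M_n$, \emph{horizontal} holonomy component of size $\le e^{-n}/K\to 0$ and vertical component of size $\le e^{n}/K$ (possibly large) --- the opposite of what you wrote. Moreover $\iota(\mu,c_n)$ for the vertical foliation $\mu$ (transverse measure $|dx|$) is controlled by the horizontal component $|x_n|$, not by $|y_n|$. With your labels, $\iota(\mu,\tilde c_n)$ would \emph{not} tend to $0$ after normalizing; with the corrected labels it does, and the two mistakes happen to cancel in the final assertion but not in the intermediate reasoning. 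The paper sidesteps all of this with the one-line identity $e^n\,\iota(c_n,\mu)=\iota(c_n,\mu_n)\leq|c_n|\leq 1/K$, where $\mu_n=e^n\mu$ is the vertical measured foliation on $M_n$ and the last inequality is the trivial bound of transverse measure by length; I would recommend replacing the holonomy-component computation with that identity, which is both shorter and unambiguous.

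One further small point: once $\iota(\mu,c_n)\to 0$ is established, the passage to $\iota(\nu,\mu)=0$ for a projective accumulation point $\nu$ implicitly uses that the normalizing factors $\|c_n\|$ stay bounded away from $0$ (which holds since each $c_n$ is essential). Your normalization to unit total mass handles this, but it is worth stating.
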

\begin{proof}
We can consider $\mu$ as an element of $\mathcal{MF}(S)$. Since $\mu$ is minimal by assumption, it represents a point in the boundary at infinity of $\Curv(S)$.
Let $\mu_t$ denote the (measured) foliation in the vertical direction on $M_t$. By definition, we have $\mu_t=e^t\cdot\mu$.
We have
$$
e^n\cdot \iota(c_n,\mu) = \iota(c_n,\mu_n) \leq |c_n| \leq 1/K, \text{ for all } n \in \N.
$$
Thus $\lim_{n\ra \infty}\iota(c_n,\mu) =0$. If $\nu$ is an element of $\mathcal{MF}(S)$ representing an accumulation point of $(c_n)$ in the space of projective measured foliations, then $\iota(\nu,\mu)=0$. Since $\mu$ is minimal, by Theorem~\ref{thm:min:fol}, $\nu$ is topologically equivalent to $\mu$. It follows from Theorem~\ref{th:Klar:conv:seq:infty} that $\mu$ is the limit of $(c_n)$ in $\partial_\infty\Curv(S)$, and the corollary follows.
\end{proof}

\subsubsection{Maps to the  curve complex and the arc and curve complex}
Let us fix a homeomorphism $f: \hat{S} \ra M$ such that $f^{-1}(\Sig)=V$.
Via the map $f$, we have two natural ``coarse'' mappings $\Psi: \Cc \ra \ACurv(S)$ and $\Psi': \Cc \ra \Curv(S)$ defined as follows: for any $k \in \Cc$,
\begin{itemize}
 \item[-] $\Psi(k)$ is the set  of vertices of $\ACurv(S)$ representing the homotopy classes of the saddle connections and regular geodesics (cylinders) in the direction $k$, and

 \item[-] $\Psi'(k)$ is the set vertices of $\Curv(S)$ representing the homotopy classes of the regular geodesics in the direction $k$.
\end{itemize}
By construction, $\diam\Psi(k)=\diam\Psi'(k)=1$ for any $k \in \Cc$.

\begin{Lemma}\label{lm:compare:leng:G:n:AC}
 Let $p,q$ be two periodic directions in $\Cc$ considered as vertices of $\GG$. Then
 \begin{equation}\label{eq:dist:G:n:AC}
  \dist(p,q) \geq \frac{1}{2} \dd_{\rm AC}(\Psi(p),\Psi(q)).
 \end{equation}
\end{Lemma}
\begin{proof}
 Let $\beta$ be a path of minimal length from $p$ to $q$ in $\GG$. Let $p=k_0,k_1,\dots,k_\ell=q$ be the elements of $\Cc$ that are contained in $\beta$, where $\dist(p,k_i)=i$.
 By construction, for each $i$, there are an element of $\Psi(k_i)$ and an element of $\Psi(k_{i+1})$ which are represented by two disjoint arcs in $S$. Therefore, there is an edge in $\ACurv(S)$ between a point in $\Psi(k_i)$ and a point in $\Psi(k_{i+1})$.
 Since $\diam\Psi(k_i)=1$, it follows that there is a path from a point in $\Psi(p)$ to a point in $\Psi(q)$ of length at most $2\ell$, from which we get  inequality~\eqref{eq:dist:G:n:AC}.
\end{proof}

\subsubsection{Proof of Proposition~\ref{prop:inf:diam}}
\begin{proof}
By Lemma~\ref{lm:compare:leng:G:n:AC}, it is  enough to show that  $\diam\Psi(\Cc)=\infty$, which is equivalent to $\diam\Psi'(\Cc)=\infty$ because the embedding of $\Curv(S)$ into $\ACurv(S)$ is a quasi-isometry. Since we can rotate $M$ such that the vertical foliation is minimal, this follows immediately from Corollary~\ref{cor:short:curv:min:fol}.
\end{proof}

\subsection{Hyperbolicity}
Our goal now is to show
\begin{Proposition}\label{prop:G:hyperbolic}
The graph $\GG$ is Gromov hyperbolic.
\end{Proposition}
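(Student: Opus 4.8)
Since $\GG(M,\Sig)$ is unchanged under passing to (half-)translation coverings, by Corollary~\ref{cor:db:cover:triangles} I may again assume that $(M,\Sig)$ is a translation surface satisfying the topological Veech dichotomy. The plan is to derive Gromov hyperbolicity from the combinatorics of $\GG(M,\Sig)$ together with the logarithmic estimate of Proposition~\ref{prop:G:dist:inters}, by verifying the ``guessing geodesics'' criterion of Masur--Schleimer and Bowditch (see \cite{MasSch13}): a connected graph $X$ is $\delta$-hyperbolic, with $\delta$ depending only on a constant $M$, provided one can assign to every pair of vertices $x,y$ a connected subgraph $A(x,y)$ containing $x$ and $y$ so that (1) $\diam A(x,y)\le M$ whenever $d_X(x,y)\le 1$, and (2) $A(x,y)$ is contained in the $M$-neighborhood of $A(x,z)\cup A(z,y)$ for all $x,y,z$. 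Combined with Proposition~\ref{prop:G:dist:inters} (connectedness), Proposition~\ref{prop:inf:diam} (infinite diameter), Lemma~\ref{lm:Gam:act:freely} (freeness of the edge action), and Proposition~\ref{prop:Vee:quot:finite} together with Proposition~\ref{prop:inf:diam} for the two implications of ``$(M,\Sig)$ is Veech $\Leftrightarrow$ $\ol{\GG}(M,\Sig)$ is finite'', this completes Theorem~\ref{thm:G:per:dir:prop}.

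First I would build the assignment $A(\cdot,\cdot)$. It is enough to define it on pairs $p,q\in\Cc(M,\Sig)$, since every vertex of $\Ic(M,\Sig)$ is at distance $1/2$ from a vertex of $\Cc(M,\Sig)$, and one then extends to arbitrary pairs by adjoining the adjacent directions, enlarging $M$ only boundedly. For $p,q$ with $\dist(p,q)\le 1$ I take $A(p,q)$ to consist of $p$, of $q$, and of every ideal triangle in $\Ic(M,\Sig)$ having both $p$ and $q$ among its vertices; this is connected (any two such triangles are joined through $p$) of diameter at most $1$, giving condition~(1). For $p,q$ with $\dist(p,q)\ge 2$ I let $A(p,q)$ be the union of all \emph{surgery paths} from $p$ to $q$: such a path is an alternating sequence $p=p_0,\Delta_0,p_1,\dots,p_m,\Delta_m,q$ produced by iterating the construction in the proof of Proposition~\ref{prop:G:dist:inters}. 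Namely, normalizing $p_j$ to the vertical and $q$ to the horizontal direction, one picks a saddle connection $s$ in direction $p_j$ realizing $\ii(p_j,q)$ and the horizontal saddle connection whose initial segment up to its first meeting with $\inter(s)$ is shortest; this gives an embedded triangle, hence an ideal triangle $\Delta_j\in\Ic(M,\Sig)$ with $p_j$ as a vertex and with other two vertices $k_1,k_2$ satisfying $\min\{\ii(k_1,q),\ii(k_2,q)\}<\ii(p_j,q)$; one takes $p_{j+1}$ to be whichever of $k_1,k_2$ has the smaller ordered intersection number with $q$, and stops once $\ii(p_m,q)=0$, where $\dist(p_m,q)=1$ by Lemma~\ref{lm:G:inters:zero}. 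Each surgery path is an honest path in $\GG(M,\Sig)$ of length at most $\log_2(1+\ii(p,q))+1$, so $A(p,q)$ is a connected subgraph through $p$ and $q$.

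The heart of the proof is condition~(2): for all $p,q,r$, every vertex of $A(p,q)$ lies within uniformly bounded $\dist$-distance of $A(p,r)\cup A(r,q)$. I would prove this by the ``retraction'' argument at the core of the surgery-sequence technique of \cite{MasSch13} (in the spirit of the unicorn-path arguments for curve and arc graphs): given a direction $w$ on a surgery path from $p$ to $q$, one performs the surgery of a minimizing saddle connection in direction $w$ against $r$ and runs a case analysis on how a saddle connection in direction $r$ meets the embedded triangles produced along the path; this shows that, through a bounded number of ideal triangles, $w$ is joined either to a direction on a surgery path from $p$ to $r$ or to one on a surgery path from $r$ to $q$. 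The essential geometric input is the extremality of the triangle produced at each step --- the first-return feature built into the construction of Proposition~\ref{prop:G:dist:inters} --- which forces the surgered configurations to be again genuine embedded triangles, Lemma~\ref{lm:loc:isom:embed} being the tool that certifies this at each step. Pairs $p,q$ with $\dist(p,q)\le 1$ but $\ii(p,q)>0$, for which $A(p,q)$ is the ``fan'' of triangles rather than a surgery path, cause no difficulty, since that fan lies within distance $1$ of both $p$ and $q$.

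\textbf{The main obstacle} is condition~(2): controlling the interaction of the local flat-geometric surgeries, i.e.\ showing surgery paths fellow-travel. That is where the bulk of the combinatorial bookkeeping lives --- tracking first-return segments, re-verifying embeddedness via Lemma~\ref{lm:loc:isom:embed}, and checking that the case analysis closes up with constants independent of the surface. Once (1) and (2) are in hand, the Masur--Schleimer/Bowditch criterion produces a universal $\delta$ for which $\GG(M,\Sig)$ is $\delta$-hyperbolic --- and, as a byproduct, the surgery paths are uniform-quality reparametrized quasigeodesics --- completing the proofs of Proposition~\ref{prop:G:hyperbolic} and Theorem~\ref{thm:G:per:dir:prop}.
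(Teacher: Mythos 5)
You correctly identify the Masur--Schleimer guessing-geodesics criterion (Theorem~\ref{thm:hyp:crit:Ma-Sc}) as the right tool, and your treatment of the Local condition for adjacent pairs is fine, but the proposal has a genuine gap at exactly the point you flag: the slim triangle condition is never proved. What you write for condition~(2) is a plan, not an argument --- ``runs a case analysis on how a saddle connection in direction $r$ meets the embedded triangles produced along the path\dots this shows that, through a bounded number of ideal triangles, $w$ is joined either to a direction on a surgery path from $p$ to $r$ or to one on a surgery path from $r$ to $q$.'' Nothing in the proposal establishes why this case analysis would close up with a uniform constant, and the analogy with unicorn paths is not as strong as it looks: the surgery triangle at each step of Proposition~\ref{prop:G:dist:inters} is determined by the flat geometry relative to the \emph{target} direction (the minimizing first-return segment to $\inter(s)$ in direction $q$), so the surgery path from $p$ to $q$ and the one from $p$ to $r$ are built from different first-return data and there is no obvious exchange lemma relating them. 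This is precisely the kind of interpolation statement one would have to invent and prove, and it is not clear it holds.

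The paper avoids this difficulty by choosing the family $\gg(k,k')$ in a completely different way. After normalizing so that $k,k'$ are the horizontal and vertical directions, it runs the Teichm\"uller flow $a_t$ and uses the Smillie--Vorobets theorem (Theorem~\ref{thm:Smillie:Vorobets}) to pick, for each $t$, a cylinder $C^0_t$ on $a_t\cdot M$ of width at least $K$; the path $\gg(k,k')$ is then assembled from the resulting directions $k(t)$ via Corollary~\ref{cor:cyls:width:bounded}. With that choice, the slim triangle estimate becomes an explicit $\SL(2,\R)$ computation: for the third direction $k''=1$, one conjugates $U=\bigl(\begin{smallmatrix}1&-1\\0&1\end{smallmatrix}\bigr)$ by $a_t$ to get $a_tUa_{-t}=\bigl(\begin{smallmatrix}1&-e^{2t}\\0&1\end{smallmatrix}\bigr)$, which for $t\le 0$ distorts lengths by a bounded factor, so a short cylinder on $a_t\cdot M$ stays short on $a_t\cdot U\cdot M$ and Corollary~\ref{cor:cyls:width:bounded} gives the bounded fellow-traveling (Lemma~\ref{lm:slim:tria:coarse}); the case $t\ge 0$ is symmetric. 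This mechanism --- continuity in $t$ of short cylinders plus the explicit conjugation --- is what makes the slim triangle condition checkable with constants depending only on the stratum, and it has no analogue in the surgery-path proposal. So the proposal takes a genuinely different route, but as written it proves only condition~(1), and the missing condition~(2) is the substance of the proposition.
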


For this purpose,  we will use the following criterion by Masur-Schleimer~\cite{MasSch13}.

\begin{Theorem}[Masur-Schleimer]\label{thm:hyp:crit:Ma-Sc}
 Suppose that $\mathcal{X}$ is a graph with all edge lengths equal to one. Then $\mathcal{X}$ is Gromov hyperbolic if there is a constant $R \geq 0$, and for all unordered pair of vertices $x,y$ in $\mathcal{X}^0$, there is a connected subgraph $g_{x,y}$ containing $x$ and $y$ with the following properties

\begin{itemize}
  \item[$\bullet$] (Local) If  $d_\mathcal{X}(x,y) \leq 1$ then $g_{x,y}$ has diameter at most $R$,

  \item[$\bullet$] (Slim triangle) For any $x,y,z \in \mathcal{X}^0$, the subgraph $g_{x,y}$ is contained in the $R$-neighborhood of $g_{x,z}\cup g_{z,y}$.
\end{itemize}
\end{Theorem}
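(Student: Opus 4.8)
The plan is to deduce Gromov hyperbolicity of $\mathcal{X}$ from the family $\{g_{x,y}\}$ by the classical ``guessing geodesics'' principle: show that each $g_{x,y}$ is uniformly Hausdorff-close to an honest geodesic of $\mathcal{X}$, and then transfer the (Slim triangle) hypothesis to genuine geodesic triangles, which is precisely the definition of hyperbolicity recalled above. All constants produced will depend only on $R$. Note that the hypotheses already make $\mathcal{X}$ connected, since $g_{x,y}$ is a connected subgraph containing $x$ and $y$, so geodesics between vertices exist and have integer length. The first step is a slim-polygon estimate: by induction on $n$, for any vertices $v_0,\dots,v_n$,
\[
g_{v_0,v_n}\ \subseteq\ N_{R\lceil\log_2 n\rceil}\Big(\bigcup_{i=0}^{n-1} g_{v_i,v_{i+1}}\Big),
\]
the case $n=2$ being exactly (Slim triangle) and $n\le 1$ being trivial; for the inductive step one puts $m=\lfloor n/2\rfloor$, applies (Slim triangle) to the triple $(v_0,v_m,v_n)$, then the inductive bound to each half, using $\lceil\log_2\lceil n/2\rceil\rceil=\lceil\log_2 n\rceil-1$.

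Next I would show $g_{x,y}$ follows a geodesic. Fix a geodesic from $x$ to $y$ with consecutive vertices $x=v_0,v_1,\dots,v_n=y$, so $n=d_{\mathcal{X}}(x,y)$; by (Local) each $g_{v_i,v_{i+1}}$ has diameter at most $R$ and hence lies in the ball of radius $R$ about $v_i\in[x,y]$. Combined with the slim-polygon bound, this gives
\[
g_{x,y}\ \subseteq\ N_{R(\lceil\log_2 d_{\mathcal{X}}(x,y)\rceil+1)}\big([x,y]\big),
\]
valid for every geodesic $[x,y]$. This is only a logarithmically growing neighborhood, and the real work is to upgrade it.

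The crux is a bootstrap to a \emph{uniform} estimate: there is $D_0=D_0(R)$ with $[x,y]\subseteq N_{D_0}(g_{x,y})$ and $g_{x,y}\subseteq N_{D_0}([x,y])$ for every pair $x,y$ and every geodesic $[x,y]$. I would argue by an extremal-point contradiction. Let $w\in[x,y]$ realize the maximal distance $D=d_{\mathcal{X}}(w,g_{x,y})$; cut out the subsegment of $[x,y]$ of length comparable to $D$ centered at $w$ (truncating at $x$ or $y$ if the geodesic is short on one side), and feed its two endpoints --- which lie within $D$ of $g_{x,y}$ by maximality --- together with $x$ and $y$ into repeated applications of (Slim triangle), using the previous step on the subsegment. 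Since $w$ itself sits at distance exactly $D$ from $g_{x,y}$ while the intermediate passages cost only an additive $O(\log D)$, one is forced into an inequality of the shape $D\le C_1\log D+C_2$ with $C_1,C_2$ depending only on $R$; hence $D$ is bounded, and the logarithmic bound of the previous step then also becomes uniform. Arranging the bookkeeping so these constants are genuinely independent of $x,y$ and of the chosen geodesic is the single delicate point; I expect this bootstrap to be the main obstacle.

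Finally, given any three vertices $x,y,z$ and geodesics between them, chain
\[
[x,y]\ \subseteq\ N_{D_0}(g_{x,y})\ \subseteq\ N_{D_0+R}\big(g_{x,z}\cup g_{z,y}\big)\ \subseteq\ N_{2D_0+R}\big([x,z]\cup[z,y]\big),
\]
using the uniform estimate for the first and last inclusions and (Slim triangle) for the middle one. Thus any geodesic from $x$ to $y$ lies in the $(2D_0+R)$-neighborhood of the union of a geodesic from $x$ to $z$ and a geodesic from $y$ to $z$, which is exactly Gromov hyperbolicity with $\delta=2D_0+R$. The slim-polygon step, the fellow-traveling step, and this last step are routine once stated correctly; the genuine content lies in the bootstrap.
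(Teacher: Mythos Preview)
The paper does not prove this theorem: it is quoted without proof as a criterion due to Masur and Schleimer (reference \cite{MasSch13}), and is used as a black box to establish the hyperbolicity of $\GG'(M,\Sig)$ in Section~\ref{sec:hyperbolicity}. So there is no ``paper's own proof'' to compare against.

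Your sketch follows the standard route to this criterion (the ``guessing geodesics'' lemma, going back to Bowditch and appearing in Masur--Schleimer, Hamenst\"adt, and elsewhere). The slim-polygon step, the logarithmic fellow-traveling, and the final transfer to genuine geodesic triangles are correct and routine, as you say. The bootstrap is indeed where the content lies, and your description is accurate in spirit but compressed. Two points deserve to be spelled out. First, to get a point of $g_{a,b}$ within $O(\log D)$ of the midpoint $w$, one needs a connectedness/intermediate-value argument: the nearest-point projection from $g_{a,b}$ onto the geodesic $[a,b]$ moves by at most $1+2R(\log(4D)+1)$ along each edge of $g_{a,b}$, and since $g_{a,b}$ is connected and contains both $a$ and $b$, some vertex of $g_{a,b}$ must project near $w$. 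Second, relating that point back to $g_{x,y}$ requires two further applications of (Slim triangle), using the nearby points $a',b'\in g_{x,y}$ and the log bound on the short legs $g_{a,a'}$, $g_{b,b'}$ (each of length at most $D$) to rule out the side cases. Once these are written down, the inequality $D\le C_1\log D+C_2$ drops out exactly as you indicate. Your plan is correct; it just needs those two ingredients made explicit to be a complete proof.
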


We will also need the following improvement of Proposition~\ref{prop:G:dist:inters}.
\begin{Lemma}\label{lm:dist:G:inters:bound}
There exists a constant $\kappa_0$  depending on the stratum of $(M,\Sig)$ such that, for any pair of  saddle connections $s_1$ and $s_2$ of $(M,\Sig)$ with directions $k_1$ and $k_2$ respectively, we have
\begin{equation}\label{eq:dist:G:inters:bound}
 \dist(k_1,k_2) \leq \log_2(\#(\inter(s_1)\cap\inter(s_2))+1) +\kappa_0.
\end{equation}
\end{Lemma}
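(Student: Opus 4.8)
The plan is to upgrade the induction in the proof of Proposition~\ref{prop:G:dist:inters} so that it tracks individual saddle connections rather than the minimized intersection number $\ii(k,k')$, and to control the price paid for switching from a fixed saddle connection $s_1$ to a well-chosen representative in its direction. First I would set $k_1 = 0$ (vertical) and $k_2 = \infty$ (horizontal) using $\GL^+(2,\R)$, so that $s_1$ is a vertical segment and $s_2$ is horizontal; write $n = \#(\inter(s_1)\cap\inter(s_2))$. The core is the following refined induction step: given any vertical saddle connection $s$, let $a_1,\dots,a_m$ be the horizontal saddle connections, and for each $a_i$ let $r_i$ be the distance along $a_i$ from its left endpoint to its first crossing with $\inter(s)$ (set $r_i=+\infty$ if there is no crossing). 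If \emph{some} $r_i$ is finite, take $a_j$ realizing the minimum; the developing map produces an embedded triangle $\T$ bordered by $s$ and two saddle connections $s',s''$ with directions $k',k''$, so $\dist(0,k')=\dist(0,k'')=1$, and as in the proof of Proposition~\ref{prop:G:dist:inters},
\begin{equation*}
\#(\inter(s')\cap\hat{\Sb}(M,\Sig,\infty)) + \#(\inter(s'')\cap\hat{\Sb}(M,\Sig,\infty)) - 1 = \#(\inter(s)\cap\hat{\Sb}(M,\Sig,\infty)),
\end{equation*}
so the smaller of the two counts drops by more than a factor of two. Iterating, after at most $\log_2(N_0+1)$ steps one reaches a vertical saddle connection disjoint from all horizontal ones, where $N_0 = \#(\inter(s_1)\cap\hat{\Sb}(M,\Sig,\infty))$, and then Lemma~\ref{lm:G:inters:zero} gives $\dist(0,\infty)=1$. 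This yields $\dist(0,\infty)\le \log_2(N_0+1)+1$.

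Next I would reduce $N_0 = \#(\inter(s_1)\cap\hat{\Sb}(M,\Sig,\infty))$, the number of crossings of $s_1$ with \emph{all} horizontal saddle connections, back to $n = \#(\inter(s_1)\cap\inter(s_2))$, the number of crossings with the single horizontal arc $s_2$. This is where the stratum-dependent constant $\kappa_0$ enters. The point is that the number $m$ of horizontal saddle connections is bounded: the horizontal direction being periodic (topological Veech dichotomy), $M$ decomposes into cylinders, and the number of saddle connections on the boundary of a cylinder decomposition is bounded in terms of the genus and the number of marked points, i.e. in terms of the stratum — say $m \le C_1$, where $C_1$ depends only on the stratum of $(M,\Sig)$. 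Similarly, Theorem~\ref{thm:Smillie:Vorobets} (Smillie--Vorobets) gives a lower bound $K>0$ on the width of some horizontal cylinder, hence an upper bound $1/K$ on its circumference after normalizing area; combined with $m\le C_1$ this bounds the total horizontal length, but more usefully: the number of times $s_1$ crosses a given horizontal saddle connection $a_i$ is comparable, up to a stratum constant, to the number of times it crosses any other, because crossings of $\inter(s_1)$ with $\hat{\Sb}(M,\Sig,\infty)$ are organized by how many horizontal cylinders $s_1$ traverses. Concretely I would argue $N_0 \le C_2\,(n+1)$ for a stratum constant $C_2$: each segment of $s_1$ inside one horizontal cylinder contributes boundedly many crossings of the bottom/top (which are concatenations of the $a_i$), and $s_1$ traverses at most $n+1$ horizontal cylinders (since a cylinder traversal forces a crossing of $\inter(s_2)$, except possibly the first and last), with each traversal contributing at most $C_1$ crossings of horizontal saddle connections. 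Then $\log_2(N_0+1) \le \log_2(C_2(n+1)+1) \le \log_2(n+1) + \log_2(2C_2)$, so taking $\kappa_0 = 1 + \log_2(2C_2)$ finishes the bound $\dist(k_1,k_2)\le \log_2(\#(\inter(s_1)\cap\inter(s_2))+1)+\kappa_0$; the same argument applies verbatim to the orienting double cover, so the half-translation case follows by Corollary~\ref{cor:db:cover:triangles}.

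The main obstacle is the step $N_0 \le C_2(n+1)$: one must carefully relate crossings of $s_1$ with the full horizontal saddle-connection set to crossings with the single arc $s_2$, uniformly over the stratum. The subtle point is that $s_2$ need not meet every horizontal cylinder, so a priori $s_1$ could cross horizontal saddle connections in a cylinder that $s_2$ avoids entirely. I expect to handle this by choosing, within the direction $k_2=\infty$, not $s_2$ itself but a horizontal \emph{curve} (core curve of a cylinder) or by passing to a more convenient horizontal saddle connection — but since the lemma is stated for the given $s_1,s_2$, the cleaner route is to observe that $\dist(k_1,k_2)$ is intrinsic to the directions, so once one bounds $\dist(0,\infty)$ in terms of \emph{any} pair of saddle connections realizing these directions (e.g. via $N_0$ and the total horizontal combinatorics), it suffices to show $N_0$ is controlled by $n$ plus a stratum constant whenever $s_1$ actually crosses $s_2$ at all; if $s_1$ does not cross $s_2$, then $\ii(k_1,k_2)=0$ is not immediate, but $s_1$ still lies in some horizontal cylinder $C$ and one produces an embedded triangle in $C$ bordered by $s_1$, giving $\dist(0,\infty)\le\dist(0,k)+\dist(k,\infty)$ for the third slope $k$, which is at most a stratum constant by the finite-combinatorics argument again. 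Assembling these cases and extracting the explicit constant is routine once the cylinder-combinatorics bound $m\le C_1$ is in hand.
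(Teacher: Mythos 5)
Your plan diverges from the paper's at the crucial point, and the divergence opens a genuine gap. The paper never tries to compare $N_0=\#\bigl(\inter(s_1)\cap\hat{\Sb}(M,\Sig,\infty)\bigr)$ with $n=\#\bigl(\inter(s_1)\cap\inter(s_2)\bigr)$. Instead it inducts \emph{directly} on $n$: at each step it builds the same embedded triangle $\T$ with sides $s_1,s',s''$ as in Proposition~\ref{prop:G:dist:inters}, and uses the fact that a horizontal chord of $\T$ crosses $\inter(s_1)$ once and $\inter(s')\cup\inter(s'')$ once, so that $\#(\inter(s')\cap\inter(s_2))+\#(\inter(s'')\cap\inter(s_2))\le n$. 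Replacing $s_1$ by whichever of $s',s''$ has fewer crossings with $s_2$ halves $n$ at each step, and the quantity $\hat{\Sb}(M,\Sig,\infty)$ never enters. The base case $n=0$ is the real content: since $s_1$ and $s_2$ are then disjoint, one completes $\{s_1,s_2\}$ to a triangulation of $(M,\Sig)$ by saddle connections; the number of triangles in such a triangulation is $2|\Sig|+4g-4$, a stratum constant, and walking through dually adjacent triangles gives a path from $k_1$ to $k_2$ in $\GG(M,\Sig)$ of length at most that constant. That is where $\kappa_0$ comes from.

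Your step $N_0\le C_2(n+1)$ does not hold. You justify it by saying a cylinder traversal of $s_1$ forces a crossing of $\inter(s_2)$, but $s_2$ is a single horizontal saddle connection, and it need not lie on the boundary of every horizontal cylinder; $s_1$ can cross arbitrarily many horizontal saddle connections belonging to cylinders that $s_2$ never touches, so $N_0$ can be large while $n=0$. You flag this yourself, but the proposed fix fails for the same reason: ``if $s_1$ does not cross $s_2$, then $s_1$ still lies in some horizontal cylinder'' is false --- being disjoint from one horizontal saddle connection does not prevent $s_1$ from crossing others, hence from leaving a cylinder. So the reduction from $N_0$ to $n$ cannot be repaired by cylinder counting, and without it your argument only recovers a bound in terms of $N_0$, which is weaker than what the lemma asserts. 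The missing idea is precisely the triangulation argument for the base case $n=0$, which handles disjoint $s_1,s_2$ uniformly over the stratum without any reference to $\hat{\Sb}(M,\Sig,\infty)$.
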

\begin{proof}
 Assume first that $\#(\inter(s_1)\cap\inter(s_2))=0$, which means that $s_1$ and $s_2$ are disjoint. We can then add other saddle connections to the family $\{s_1,s_2\}$ to obtain a triangulation of $(M,\Sig)$. Let $\kappa_0$ be the number of triangles in this triangulation.  Note that this number only depends on the stratum of $(M,\Sig)$.  Now, since each triangle in this triangulation represents a vertex in $\GG$ that is connected to the vertices representing the directions of its three sides, we see that there is a path in $\GG$ from $k_1$ to $k_2$ of length at most $\kappa_0$. Thus we have
 $$
 \dist(k_1,k_2) \leq \kappa_0.
 $$
 For the case $\#(\inter(s_1)\cap\inter(s_2))>0$,  we us the same induction as in Proposition~\ref{prop:G:dist:inters} to conclude.
\end{proof}
\begin{Corollary}\label{cor:cyls:width:bounded}
Let $C$  be a cylinder, and $s$ a saddle connection in $(M,\Sig)$. Let $w(C)$ denote the width of $C$ and $|s|$ the length of $s$.  Then the distance in $\GG$ between the direction of $C$ and the direction of $s$ is at most $\log_2(\frac{|s|}{w(C)}+1)+\kappa_0$.
\end{Corollary}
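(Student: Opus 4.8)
The plan is to reduce the estimate to Lemma~\ref{lm:dist:G:inters:bound} by using a well-chosen saddle connection in the direction of $C$. Write $k_C$ for the direction of $C$ --- equivalently the direction of its core curves and of its boundary saddle connections --- and $k_s$ for the direction of $s$; these are vertices of $\GG(M,\Sig)$ since $s$ lies in some embedded triangle and the boundary of $C$ contains saddle connections of direction $k_C$ lying in embedded triangles contained in $C$ (produced as in Case~(b) of the proof of Lemma~\ref{lm:ideal:tri:cover}). First I would fix a saddle connection $s_1$ contained in one of the boundary components of $C$, so that $s_1$ has direction $k_C$, and then apply \eqref{eq:dist:G:inters:bound} to the pair $(s_1,s)$.

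The crux is the geometric estimate
$$
\#\big(\inter(s_1)\cap\inter(s)\big)\ \le\ \frac{2|s|}{w(C)}.
$$
To obtain it I would examine the finitely many maximal subsegments of $\inter(s)$ whose interiors lie in the open cylinder $C$. Realising $C$ via the developing map as a flat strip of width $w(C)$, the transverse coordinate is affine along any straight segment; hence such a subsegment cannot have both endpoints on the same boundary component of $C$ (otherwise its interior would lie in $\partial C$), so it crosses $C$ from one boundary component to the other and has Euclidean length at least $w(C)$. As these subsegments have pairwise disjoint interiors contained in $s$, there are at most $|s|/w(C)$ of them, whence $\inter(s)$ meets $\partial C$ --- and a fortiori $\inter(s_1)\subset\partial C$ --- in at most $2|s|/w(C)$ points. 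Plugging this into \eqref{eq:dist:G:inters:bound} gives
$$
\dist(k_C,k_s)\ \le\ \log_2\!\Big(\frac{2|s|}{w(C)}+1\Big)+\kappa_0\ \le\ \log_2\!\Big(\frac{|s|}{w(C)}+1\Big)+\kappa_0+1,
$$
so the assertion holds after enlarging the stratum-dependent constant $\kappa_0$ by $1$.

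The main obstacle is precisely this counting: one must handle with care the subsegments of $s$ that touch $\partial C$ without crossing it, as well as the (permitted) situation in which one saddle connection occurs in both boundary components of $C$ --- this last point is what can double the intersection count on $s_1$ and accounts for the harmless factor $2$. Both are settled by the flat-strip model of the cylinder; the remainder is a direct invocation of Lemma~\ref{lm:dist:G:inters:bound}.
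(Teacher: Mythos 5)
Your argument is correct and follows the same overall strategy as the paper's: fix a boundary saddle connection $s_1$ of $C$ and invoke Lemma~\ref{lm:dist:G:inters:bound} after bounding $\#(\inter(s_1)\cap\inter(s))$ in terms of $|s|/w(C)$. The only difference is in how that intersection count is produced: the paper counts intersections of $\inter(s)$ with the \emph{core curve} of $C$ (each full crossing of $C$ meets the core curve exactly once and has length at least $w(C)$, and each such crossing meets $\inter(s_1)$ at most once), yielding the cleaner bound $|s|/w(C)$ and hence the stated constant $\kappa_0$; you instead count boundary endpoints of the maximal arcs of $\inter(s)$ inside $C$, which doubles the bound. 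Your factor of $2$ is in fact the careful answer in the situation the paper allows but does not explicitly address in this proof, namely when $s_1$ lies in \emph{both} boundary components of $C$ (so a single crossing of $C$ may contribute two intersections with $\inter(s_1)$). Enlarging $\kappa_0$ by $1$, as you note, is harmless: $\kappa_0$ is a floating stratum-dependent constant that only ever enters additively in the subsequent hyperbolicity estimates.
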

\begin{proof}
Let $c$ be a core curve of $C$. Let $m$ be the number of intersections between $c$ and $\inter(s)$.  Obviously, we only need to consider the case $c$ and $s$ are not parallel.
Since $|s| \geq  mw(C)$, we have $m \leq |s|/w(C)$. If $s'$ is a saddle connection in the boundary of $C$, then we have
$$
\#(\inter(s),\inter(s')) \leq m \leq \frac{|s|}{w(C)}.
$$
We then conclude by Lemma~\ref{lm:dist:G:inters:bound}.
\end{proof}

\subsubsection{Paths connecting pairs of points in $\Cc$}
In view of Theorem~\ref{thm:hyp:crit:Ma-Sc}, to simplify the arguments,  we will consider another graph, denoted by $\GG'$, closely related to $\GG$.  The vertices of $\GG'$ are elements of $\Cc$. Two vertices are connected by an edge if and only if they are two vertices of an ideal triangle in $\Ic$. The length of every edge is set to be one.

There is a natural map $\Xi: \GG' \to \GG$ defined as follows: $\Xi$ is identity on $\Cc \simeq {\GG'}^{(0)}$. For each edge $e\in \GG'^{(1)}$, whose endpoints are $k_1,k_2\in \Cc$, $\Xi(e)$ is the union of two edges in $\GG$ that connect $k_1,k_2$ through a vertex representing an ideal triangle  $\Delta\in \Ic$.
Recall that by construction,  $k_1,k_2$ are two vertices of $\Delta$.

Note that $\Delta$ may not be unique, however the number of admissible $\Delta$ is bounded by a constant depending only on the stratum of $(M,\Sig)$.
Indeed, let $\T$ be an embedded triangle  associated with $\Delta$ whose sides are denoted by $s_1,s_2,s_3$.
We can assume that the directions of $s_1$ and $s_2$ are $k_1$ and $k_2$ respectively.
Since each pair of oriented saddle connections are contained in the boundary of at most one embedded triangle, and the number of saddle connections in a given direction is determined by the stratum of $(M,\Sig)$,
the number of ideal triangle in $\Ic$ that contains $k_1$ and $k_2$ as vertices is bounded by a universal constant depending only on the stratum of $(M,\Sig)$.

Since every element of $\Ic$ is of distance $\frac{1}{2}$ from $\Cc$, we get
\begin{Lemma}~\label{lm:GG:GGb:equiv}
For any pair $(k,k')$ of directions in $\Cc$, the distances between $k$ and $k'$ in $\GG'$ and in $\GG$ are the same. The map $\Xi$ is therefore a quasi-isometry.
\end{Lemma}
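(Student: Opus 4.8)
The plan is to establish the two assertions of Lemma~\ref{lm:GG:GGb:equiv} separately, starting with the equality of distances. First I would observe that every edge of $\GG'(M,\Sig)$ joining $k$ and $k'$ is, via the map $\Xi$, the concatenation of exactly two edges of $\GG(M,\Sig)$, each of length $\frac{1}{2}$; hence any path of combinatorial length $\ell$ in $\GG'(M,\Sig)$ from $k$ to $k'$ maps to a path in $\GG(M,\Sig)$ of length $\ell$ (since $2\ell \cdot \frac{1}{2} = \ell$), giving $\dist_{\GG}(k,k') \leq \dist_{\GG'}(k,k')$. For the reverse inequality, take a geodesic in $\GG(M,\Sig)$ from $k$ to $k'$. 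Since the two vertex classes $\Cc(M,\Sig)$ and $\Ic(M,\Sig)$ alternate along any edge-path (edges only connect an element of $\Cc$ to an element of $\Ic$), and since $k,k' \in \Cc(M,\Sig)$, the geodesic visits vertices $k = k_0, \Delta_1, k_1, \Delta_2, \dots, \Delta_\ell, k_\ell = k'$ with $k_i \in \Cc(M,\Sig)$, $\Delta_i \in \Ic(M,\Sig)$; it has length $2\ell \cdot \frac{1}{2} = \ell$. For each $i$, the consecutive vertices $k_{i-1}, k_i$ are both vertices of $\Delta_i$, so by definition they are joined by an edge of $\GG'(M,\Sig)$; concatenating these gives a path of combinatorial length $\ell$ in $\GG'(M,\Sig)$, so $\dist_{\GG'}(k,k') \leq \ell = \dist_{\GG}(k,k')$. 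This proves the distances agree.

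Next I would deduce that $\Xi$ is a quasi-isometry. The distance computation above already shows that $\Xi$ restricted to $\Cc(M,\Sig) \simeq \GG'(M,\Sig)^{(0)}$ is an isometric embedding onto the induced metric on the subset $\Cc(M,\Sig) \subset \GG(M,\Sig)^{(0)}$. So the only point to check is that this subset is coarsely dense in $\GG(M,\Sig)$: every vertex of $\GG(M,\Sig)$ lies within bounded distance of $\Cc(M,\Sig)$. But the vertices of $\GG(M,\Sig)$ that are not in $\Cc(M,\Sig)$ are precisely the elements of $\Ic(M,\Sig)$, and each such $\Delta$ is joined by an edge of length $\frac{1}{2}$ to each of its (three) vertices, which lie in $\Cc(M,\Sig)$; hence every vertex of $\GG(M,\Sig)$ is within distance $\frac{1}{2}$ of $\Cc(M,\Sig)$. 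Combined with the isometry on vertices and the fact that $\Xi$ maps edges to paths of bounded length (length one, for the two half-edges), the standard criterion gives that $\Xi$ is a $(1, C)$-quasi-isometry for a universal additive constant $C$ (one may take $C = 1$). For completeness I would also note that $\Xi$ is $1$-Lipschitz on the nose at the level of the path metrics (an edge of length one maps to a path of length one), so the quasi-isometry constants are in fact as tight as possible.

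The argument is essentially bookkeeping, so there is no serious obstacle; the one place that requires a little care is the alternation property of edge-paths in $\GG(M,\Sig)$, which is what lets one pair up the vertices $k_{i-1}, k_i$ as vertices of a common $\Delta_i \in \Ic(M,\Sig)$. This uses only the defining property of $\GG(M,\Sig)$ that there are no edges within $\Cc(M,\Sig)$ or within $\Ic(M,\Sig)$, so it is immediate, but it should be stated explicitly since it is the structural fact driving both inequalities. A minor subtlety worth a sentence is that the ideal triangle $\Delta_i$ witnessing an edge of $\GG'(M,\Sig)$ need not be unique — but for the distance and quasi-isometry statements any choice works, and the preceding paragraph of the paper has already recorded that the number of admissible $\Delta_i$ is bounded by a constant depending only on the stratum, which is all one needs later.
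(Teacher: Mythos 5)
Your proof is correct and follows the same idea the paper gestures at (the paper states the lemma as an immediate consequence of the observation that every vertex of $\Ic(M,\Sig)$ lies within distance $\frac{1}{2}$ of $\Cc(M,\Sig)$, without writing out the bookkeeping). Your version just makes explicit the bipartite alternation and the two inequalities that the paper leaves implicit; the one micro-point you could add is that along a geodesic in $\GG(M,\Sig)$ one has $k_{i-1}\neq k_i$ (otherwise the two half-edges through $\Delta_i$ could be deleted), which is needed for $k_{i-1}$ and $k_i$ to actually bound an edge of $\GG'(M,\Sig)$.
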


Our goal now is to show that $\GG'$ is Gromov hyperbolic.  Lemma~\ref{lm:GG:GGb:equiv} then implies that $\GG$ is also Gromov hyperbolic.
By a slight abuse of notation, we will also denote by $\dist$ the distance in $\GG'$.

Our first task is to construct for every pair $(k,k')$ of directions in $\Cc$ a path in $\GG'$ connecting them.
Using $\PSL(2,\R)$, we can assume that $k$ is the horizontal direction  and $k'$ is the vertical direction. We can further normalize $M$ by a matrix $a_t:=\left(\begin{smallmatrix} e^t & 0 \\ 0 & e^{-t}\end{smallmatrix}\right), \; t \in \R$, such that the shortest horizontal saddle connection and the shortest vertical saddle connection have the same length.

For any $t \in \R$, let $M_t:=a_t \cdot M$. If $c$ is a regular geodesic or a saddle connection on $(M,\Sig)$, the length of $c$ on $M_t$ will be denoted by $|c|_t$.
By Theorem~\ref{thm:Smillie:Vorobets}, there is a cylinder $C_t$ on $M_t$ of width bounded below by $K$.  The cylinder $C_t$ may be not unique, but  we have
\begin{Lemma}\label{lm:2:cyl:bdd:width}
If $C'_t$ is another cylinder of width bounded below by $K$ in $M_t$, then the distance in $\GG'$ between the directions of $C_t$ and $C'_t$  is at most $(\log_2(K^{-2}+1)+\kappa_0)$.
\end{Lemma}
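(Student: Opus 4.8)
The plan is to derive the bound directly from Corollary~\ref{cor:cyls:width:bounded}, once the length of a boundary saddle connection of one of the two cylinders has been controlled by $K$.

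First I would write $p$ and $p'$ for the directions of $C_t$ and $C'_t$. Both lie in $\Cc(M,\Sig)$, since a cylinder always contains an embedded triangle with a side on its boundary, so the direction of a cylinder is an element of $\Cc$. We may assume $\Aa(M)=1$ (rescaling $M$ changes neither the directions nor the graph $\GG$), hence $\Aa(M_t)=1$ for every $t$. Letting $c$ denote the circumference of $C'_t$ and using that the area of a cylinder is the product of its width and its circumference, we get $w(C'_t)\cdot c=\Aa(C'_t)\le\Aa(M_t)=1$, and since $w(C'_t)\ge K$ this yields $c\le 1/K$. I would then pick a saddle connection $s'$ contained in a boundary component of $C'_t$: its direction is $p'$, and any saddle connection in a boundary component of $C'_t$ has length at most the circumference $c$, so $|s'|_t\le c\le 1/K$.

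Next I would apply Corollary~\ref{cor:cyls:width:bounded} on the surface $M_t$. This is legitimate because $M_t$ is affinely equivalent to $M$, hence lies in the same stratum, satisfies the topological Veech dichotomy, and has area one; moreover its graph of periodic directions is canonically identified with $\GG(M,\Sig)$ via the action of $a_t$, and the stratum constant $\kappa_0$ is the same. Applying the corollary to the cylinder $C_t$ and the saddle connection $s'$, and using $w(C_t)\ge K$ together with $|s'|_t\le 1/K$, gives
$$
\dist(p,p')=\dist\big(p,\,\text{direction of }s'\big)\le\log_2\Big(\frac{|s'|_t}{w(C_t)}+1\Big)+\kappa_0\le\log_2(K^{-2}+1)+\kappa_0 .
$$
This is the distance in $\GG(M,\Sig)$; by Lemma~\ref{lm:GG:GGb:equiv} the distance between two directions of $\Cc(M,\Sig)$ is the same in $\GG(M,\Sig)$ and in $\GG'(M,\Sig)$, so the same inequality holds in $\GG'(M,\Sig)$, which is the assertion.

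The argument is short, so I do not expect a genuine obstacle; the two points that need care are the elementary area estimate used to obtain $|s'|_t\le 1/K$, and the verification that Corollary~\ref{cor:cyls:width:bounded} transfers verbatim from $M$ to the affinely equivalent surface $M_t$, which is immediate from the $\SL(2,\R)$-invariance of the whole construction together with the fact that $\kappa_0$ depends only on the stratum.
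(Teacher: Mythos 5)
Your proof is correct and follows the same route as the paper: bound a boundary saddle connection of one of the two wide cylinders by $1/K$ using the area-one normalization, then feed it into Corollary~\ref{cor:cyls:width:bounded}. The only cosmetic difference is that you take $s'$ from the boundary of $C'_t$ and apply the corollary to $C_t$, whereas the paper does the symmetric thing; you also spell out the (harmless) passage from $\GG$ to $\GG'$ via Lemma~\ref{lm:GG:GGb:equiv}, which the paper leaves implicit.
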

\begin{proof}
Since $\Aa(M_t)=\Aa(M)=1$,  the circumference of $C_t$ is at most $K^{-1}$. In particular, a saddle connection $s$ in the boundary of $C_t$ has length at most $K^{-1}$.
Let $k$ and $k'$ be the directions of $C_t$ and $C'_t$ respectively. Then Corollary~\ref{cor:cyls:width:bounded} implies
$$
\dist(k,k') \leq \log_2(\frac{|s|_t}{w(C'_t)}+1)+\kappa_0 \leq \log_2(K^{-2}+1)+\kappa_0.
$$
\end{proof}
In what follows, for any $t\in \R$, we denote by $C^0_t$ a cylinder of width bounded below by $K$ in $M_t$ and by $k(t)$ the direction of $a_{-t}(C^0_t)$. Note that we have $k(t) \in \Cc$.

\begin{Lemma}\label{lm:seq:cyls:bounded:w}\hfill
\begin{itemize}
 \item[(i)] There exists $t_0>0$ such that if $t>t_0$, then $k(t)=0$, and if $t<-t_0$ then $k(t)=\infty$.

 \item[(ii)] For any $t_1,t_2\in \R$, $\dist(k({t_1}),k({t_2})) \leq \log_2(K^{-2}+1)+\frac{|t_1-t_2|}{\ln(2)}+\kappa_0$.
\end{itemize}
\end{Lemma}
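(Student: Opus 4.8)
For part~(i) the plan is to show that once $t$ is large enough, every cylinder of $M_t$ of width at least $K$ is the image under $a_t$ of a \emph{vertical} cylinder of $M$, and symmetrically that for $t$ very negative every such cylinder comes from a \emph{horizontal} cylinder of $M$. The crux is a lower bound on a holonomy coordinate of the core of a ``transverse'' cylinder. Since $0=k'\in\Cc(M,\Sig)$, there is a vertical saddle connection on $M$, so by the topological Veech dichotomy the vertical direction is periodic and $M$ is the union of finitely many vertical cylinders $C^v_1,\dots,C^v_m$ with pairwise disjoint interiors whose union of boundaries is the union of the vertical saddle connections; set $w^v_{\min}=\min_j w(C^v_j)$. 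I claim that if $C$ is any cylinder of $M$ whose direction is not vertical and $(p,q)\in\R^2$ is the holonomy of a core curve $\gamma$ of $C$, then $|p|\ge w^v_{\min}$. Indeed, $\gamma$ is disjoint from $\Sig$, hence it cannot be contained in the union of the vertical saddle connections (which, once the singularities are removed, is a disjoint union of open vertical segments, while $\gamma$ is a closed curve in a non-vertical direction), so $\gamma$ meets the interior of some $C^v_j$; along any maximal subarc of $\gamma$ inside $C^v_j$ the horizontal coordinate is strictly monotone, so that subarc joins the two boundary components of $C^v_j$ and has horizontal displacement exactly $w(C^v_j)$, and since the horizontal coordinate is monotone along all of $\gamma$ this forces $|p|\ge w(C^v_j)\ge w^v_{\min}$.

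Granting this claim, part~(i) follows by a direct estimate (we normalise so that $\Aa(M)=1$). If $C$ is a non-vertical cylinder of $M$ with core holonomy $(p,q)$, then on $M_t$ its core has holonomy $(e^tp,e^{-t}q)$, so its circumference on $M_t$ is at least $e^t|p|\ge e^tw^v_{\min}$ and hence its width on $M_t$ is at most $1/(e^tw^v_{\min})$. Thus once $t>\ln\tfrac1{Kw^v_{\min}}$, no non-vertical cylinder of $M$ has width $\ge K$ on $M_t$, so $a_{-t}(C^0_t)$ is vertical and $k(t)=0$. Running the same argument with the vertical and horizontal directions exchanged (using $\infty\in\Cc(M,\Sig)$, so that the horizontal direction is periodic; let $w^h_{\min}$ be the least width of a horizontal cylinder of $M$) gives $k(t)=\infty$ whenever $t<-\ln\tfrac1{Kw^h_{\min}}$. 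Taking $t_0=\max\bigl(1,\ \ln\tfrac1{Kw^v_{\min}},\ \ln\tfrac1{Kw^h_{\min}}\bigr)$ proves~(i).

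For part~(ii) the plan is to transport everything to $M_{t_1}$ and apply Corollary~\ref{cor:cyls:width:bounded}. The $\SL(2,\R)$-action carries $\Cc(M,\Sig)$, $\Ic(M,\Sig)$ and the combinatorial structure of $\GG$ and of $\GG'(M,\Sig)$ onto those of $M_{t_1}$, and $\kappa_0$ depends only on the stratum, so $\dist(k(t_1),k(t_2))=\dist_{M_{t_1}}\!\bigl(a_{t_1}k(t_1),\,a_{t_1}k(t_2)\bigr)$. Here $a_{t_1}k(t_1)$ is the direction of $C^0_{t_1}$, which has width $\ge K$ on $M_{t_1}$, and $a_{t_1}k(t_2)$ is the direction of the cylinder $a_{t_1-t_2}(C^0_{t_2})$ on $M_{t_1}$. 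Since $C^0_{t_2}$ has width $\ge K$ on $M_{t_2}$, its circumference there is at most $1/K$, so a saddle connection $s$ in its boundary has length at most $1/K$ on $M_{t_2}$; as $a_{t_1-t_2}$ distorts lengths by a factor at most $e^{|t_1-t_2|}$, the saddle connection $a_{t_1-t_2}(s)$ lies in the boundary of $a_{t_1-t_2}(C^0_{t_2})$, has direction $a_{t_1}k(t_2)$, and has length at most $e^{|t_1-t_2|}/K$ on $M_{t_1}$. Applying Corollary~\ref{cor:cyls:width:bounded} on $M_{t_1}$ to $C^0_{t_1}$ and this saddle connection, together with Lemma~\ref{lm:GG:GGb:equiv} to pass between $\GG$ and $\GG'$, we obtain
\[
\dist(k(t_1),k(t_2))\ \le\ \log_2\!\Bigl(\tfrac{e^{|t_1-t_2|}/K}{K}+1\Bigr)+\kappa_0\ =\ \log_2\bigl(e^{|t_1-t_2|}K^{-2}+1\bigr)+\kappa_0 ;
\]
since $e^{|t_1-t_2|}K^{-2}+1\le e^{|t_1-t_2|}\bigl(K^{-2}+1\bigr)$, taking $\log_2$ yields the claimed bound $\log_2(K^{-2}+1)+\tfrac{|t_1-t_2|}{\ln(2)}+\kappa_0$.

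The step I expect to be the main obstacle is the geometric claim in part~(i): that a closed geodesic whose direction is transverse to a cylinder decomposition must sweep across the full width of at least one of the cylinders. Once this is in hand, part~(i) is an elementary computation, and part~(ii) is essentially a direct application of Corollary~\ref{cor:cyls:width:bounded} after normalising by $a_{t_1}$.
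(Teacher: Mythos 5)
Your proposal is correct and takes essentially the same route as the paper. For part~(i), the paper argues exactly as you do: once $t$ is large, every vertical cylinder on $M_t$ has width at least $1/K$, so a non-vertical closed geodesic (which must cross the interior of some vertical cylinder, and whose transverse coordinate is monotone) has circumference at least $1/K$ and hence lies in a cylinder of width less than $K$; you simply spell out the geometric claim about core curves sweeping across a full cylinder width, which the paper leaves implicit. For part~(ii), the paper also uses the length-distortion bound $|c|_{t_1}/|c|_{t_2}\le e^{|t_1-t_2|}$ and feeds it into Corollary~\ref{cor:cyls:width:bounded}; your version is marginally more careful in passing from the core curve to a boundary saddle connection before invoking the corollary, but the computation and the final inequality $\log_2(e^{|t_1-t_2|}K^{-2}+1)\le \log_2(K^{-2}+1)+\frac{|t_1-t_2|}{\ln 2}$ are the same.
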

\begin{proof}\hfill
\begin{itemize}
 \item[(i)] If $t>0$ is large enough then the width of any vertical cylinder in $M_t$ is at least $1/K$. Thus a non-vertical cylinder in $M_t$ has circumference at least $1/K$, hence its width must be smaller than $K$. Thus we must have $k(t)=0$. Similar arguments apply  for $M_{-t}$.

 \item[(ii)] Observe that we have for any  saddle connection or  regular geodesic $c$ on $M$,
 $$
 \frac{|c|_{t_1}}{|c|_{t_2}} \leq e^{|t_1-t_2|}.
 $$
 Since the length of a core curve of $C^0_{t_1}$ on $M_{t_1}$ is at most $K^{-1}$, its length in $M_{t_2}$ is at most $e^{|t_1-t_2|}K^{-1}$. Thus the conclusion follows from Corollary~\ref{cor:cyls:width:bounded}.
\end{itemize}
\end{proof}
Define
\begin{equation}\label{eq:def:path:in:G:1}
\gg^*(k,k'):=\{k(i), \; i \in \Z\} \subset \Cc \simeq {\GG'}^{(0)}.
\end{equation}
By Lemma~\ref{lm:seq:cyls:bounded:w}, the set $\gg^*(k,k')$ is  finite.
For any $i \in \Z$, let $\gamma_i$ be a path of minimal length in $\GG'$ from $k(i)$ and $k({i+1})$.
Let
\begin{equation}\label{eq:def:path:in:G:2}
\gg(k,k'):=\bigcup_{i\in\Z} \gamma_i \subset \GG'.
\end{equation}
By construction, $\gg(k,k')$ is obviously a connected finite subgraph of $\GG'$.
For any subset $\mathcal{A}$ of $\GG'$ and any $r>0$, let us denote by $\Nc(\mathcal{A},r)$ the $r$-neighborhood of $\mathcal{A}$ in $\GG'$.

\begin{Lemma}\label{lm:gg:prop}
There is a constant $R_1>0$, depending only on the stratum of $(M,\Sig)$, such that
\begin{itemize}
 \item[(a)] $\gg(k,k') \subset \Nc(\gg^*(k,k'),R_1)$, and

 \item[(b)] for any $t \in \R$, $k(t) \in \Nc(\gg^*(k,k'),R_1)$.
\end{itemize}
\end{Lemma}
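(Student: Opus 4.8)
The plan is to read off both assertions directly from the uniform distance estimate of Lemma~\ref{lm:seq:cyls:bounded:w}(ii). Concretely, I would set
$$
R_1 := \log_2(K^{-2}+1) + \frac{1}{\ln 2} + \kappa_0,
$$
where $K$ is the Smillie--Vorobets constant (Theorem~\ref{thm:Smillie:Vorobets}) attached to the stratum of $(M,\Sig)$ and $\kappa_0$ is the constant of Lemma~\ref{lm:dist:G:inters:bound}; by construction $R_1$ depends only on the stratum, which is what the statement demands. Both (a) and (b) will then be immediate once one notices that consecutive terms of the sequence $(k(i))_{i\in\Z}$, and more generally any $k(t)$ together with its ``nearest'' integer sample $k(i)$, lie within $R_1$ of one another in $\GG'(M,\Sig)$.

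For (b), given $t\in\R$ I would pick $i\in\Z$ with $|t-i|\leq 1$ (say the integer nearest to $t$) and apply Lemma~\ref{lm:seq:cyls:bounded:w}(ii) to the pair $(t,i)$, obtaining $\dist(k(t),k(i))\leq \log_2(K^{-2}+1)+|t-i|/\ln 2+\kappa_0\leq R_1$. Since $k(i)$ belongs to $\gg^*(k,k')$ by definition, this places $k(t)$ in $\Nc(\gg^*(k,k'),R_1)$. For (a), I would use that $\gg(k,k')=\bigcup_{i\in\Z}\gamma_i$, where each $\gamma_i$ is a minimal-length, hence geodesic, path from $k(i)$ to $k(i+1)$; Lemma~\ref{lm:seq:cyls:bounded:w}(ii) with $t_1=i$, $t_2=i+1$ bounds its length by $R_1$, so every vertex on $\gamma_i$ lies within $R_1$ (indeed within $R_1/2$) of one of its endpoints $k(i),k(i+1)\in\gg^*(k,k')$. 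Taking the union over $i$ gives $\gg(k,k')\subset\Nc(\gg^*(k,k'),R_1)$.

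I do not expect a genuine obstacle here: the substance has already been absorbed into Lemma~\ref{lm:seq:cyls:bounded:w}, whose proof rests on the existence of a definite-width cylinder on every unit-area surface and on the combinatorial bound of Lemma~\ref{lm:dist:G:inters:bound}. The only point meriting attention is the \emph{uniformity} of $R_1$ --- it must depend neither on $t$ nor on the particular pair $(k,k')$ --- and this is exactly guaranteed by the fact that Lemma~\ref{lm:seq:cyls:bounded:w}(ii) controls $\dist(k(t_1),k(t_2))$ in terms of $|t_1-t_2|$ together with stratum constants only, so that sampling the continuous family $(k(t))_{t\in\R}$ at the integers loses at most a bounded amount.
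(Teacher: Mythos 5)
Your proof is correct and matches the paper's argument essentially verbatim: same choice of $R_1$, same appeal to Lemma~\ref{lm:seq:cyls:bounded:w}(ii) with $|t_1-t_2|\leq 1$ for part (b), and the same observation that each $\gamma_i$ has length at most $R_1$ so its points lie within $R_1/2$ of an endpoint for part (a).
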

\begin{proof}
 Set $R_1=\log_2(K^{-2}+1)+\kappa_0+1/\ln(2)$. From Lemma~\ref{lm:seq:cyls:bounded:w}, we have $\dist(k(i),k({i+1})) \leq R_1$. Thus every point in $\gamma_i$ is of distance at most $R_1/2$ from either $k(i)$ or $k({i+1})$, from which we get (a). Again, by  Lemma~\ref{lm:seq:cyls:bounded:w}, any $k(t)$ is of distance at most $R_1$ from a point $k(i)$, with $i\in \Z$, and (b) follows.
\end{proof}

\subsubsection{Local property}
\begin{Lemma}\label{lm:loc:prop:G}
There is a constant $R_2>0$ such that if $\dist(k,k')=1$, then $\diam(\gg(k,k'))< R_2$.
\end{Lemma}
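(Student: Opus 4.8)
The plan is to deduce the bound on $\diam(\gg(k,k'))$ from a bound on $\diam(\gg^*(k,k'))$. Indeed, by Lemma~\ref{lm:gg:prop}(a) every vertex of $\gg(k,k')$ lies within $R_1$ of $\gg^*(k,k')$, so it is enough to show that all the vertices $k(i)$, $i\in\Z$, lie within a uniformly bounded distance of $k$ in $\GG'(M,\Sig)$. Recall the normalisation in force: $k$ is the horizontal direction, $k'$ the vertical direction, $\Aa(M)=1$ (rescaling $M$ does not change $\GG'$), and the shortest horizontal saddle connection and the shortest vertical saddle connection of $M$ have a common length $\ell$. Write $\theta_t$ for the direction of the cylinder $C^0_t$ inside $M_t=a_t\cdot M$, so that $k(t)=a_{-t}(\theta_t)$.

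The hypothesis $\dist(k,k')=1$ will be used only to bound $\ell$ from above. First I would argue as follows: $\dist(k,k')=1$ means that $k$ and $k'$ are two of the three vertices of some ideal triangle $\Delta\in\Ic(M,\Sig)$; picking an embedded triangle $\T\in\Tria(M,\Sig)$ with $\Delta_\T=\Delta$, two of its sides $s_1,s_2$ are respectively horizontal and vertical. Being two sides of a triangle they meet at a vertex, so in $\R^2$ they are the legs of a right triangle and $\Aa(\T)=\frac12|s_1|\,|s_2|$. Since $\T$ is embedded, $\Aa(\T)\le\Aa(M)=1$; and since $|s_1|,|s_2|\ge\ell$ by the normalisation, this gives $\ell^2\le|s_1|\,|s_2|\le 2$, hence $\ell\le\sqrt2$. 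In particular, for every $t\ge0$ the shortest vertical saddle connection of $M_t$ has length $e^{-t}\ell\le\sqrt2$, and for every $t\le0$ the shortest horizontal saddle connection of $M_t$ has length $e^{t}\ell\le\sqrt2$.

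Next I would estimate the relevant distances through Corollary~\ref{cor:cyls:width:bounded}. Writing $\Sig_t:=a_t(\Sig)$ and using that $\Tria$ and $\Ic$ are $\GL^+(2,\R)$-equivariant, the matrix $a_t$ induces an isometric graph isomorphism $\GG'(M,\Sig)\to\GG'(M_t,\Sig_t)$ which sends a direction $\theta$ to $a_t\theta\in\partial\Hh$; it fixes the horizontal and vertical directions and sends $k(t)$ to $\theta_t$, so $\dist(k(t),k')$ equals the distance in $\GG'(M_t,\Sig_t)$ between $\theta_t$ and the vertical direction (which, for directions in $\Cc$, coincides with the distance in $\GG(M_t,\Sig_t)$ by Lemma~\ref{lm:GG:GGb:equiv}). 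For $t\ge0$, applying Corollary~\ref{cor:cyls:width:bounded} inside $M_t$ with the cylinder $C^0_t$ (of width at least $K$) and a shortest vertical saddle connection $s$ of $M_t$ (of length at most $\sqrt2$ by the previous step) gives $\dist(k(t),k')\le\log_2(\sqrt2/K+1)+\kappa_0=:B$. Symmetrically, for $t\le0$ one gets $\dist(k(t),k)\le B$. Since $\dist(k,k')=1$, it follows that every $k(i)$ lies within $B+1$ of $k$, whence $\diam(\gg^*(k,k'))\le 2B+2$ and finally $\diam(\gg(k,k'))\le 2B+2+2R_1=:R_2$, a constant depending only on the stratum of $(M,\Sig)$.

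The main obstacle is the single non-formal point, the inequality $\ell\le\sqrt2$: this is exactly where $\dist(k,k')=1$ enters, since for an arbitrary pair of directions the normalised surface may be pinched in the horizontal (or vertical) direction, forcing $\ell$ to be small and $B$ to blow up, so that no uniform bound would be available. Beyond that one only has to be careful that the two distinguished sides of $\T$ really do share a vertex and that the area comparison $\Aa(\T)\le\Aa(M)$ is applied in the correct direction; the rest is bookkeeping of constants already furnished by Lemma~\ref{lm:gg:prop}, Lemma~\ref{lm:seq:cyls:bounded:w} and Corollary~\ref{cor:cyls:width:bounded}.
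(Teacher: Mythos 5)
Your proof is correct and follows essentially the same route as the paper: normalise so $k$ and $k'$ are horizontal and vertical with equal shortest saddle-connection length $\ell$, use the embedded right triangle guaranteed by $\dist(k,k')=1$ to get $\ell\le\sqrt2$, then for each $t$ of the appropriate sign apply Corollary~\ref{cor:cyls:width:bounded} in $M_t$ to bound $\dist(k(t),k)$ or $\dist(k(t),k')$ by a uniform constant, and finish with Lemma~\ref{lm:gg:prop}(a). Your write-up spells out a few points the paper leaves implicit (the right-angle/area computation, the $a_t$-equivariance of $\GG'$, the $\GG\leftrightarrow\GG'$ comparison), and your final constant differs from the paper's by $1$ owing to a slightly different assembly, but the argument is the same.
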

\begin{proof}
We can suppose that $k$ is the horizontal direction, and $k'$ is the vertical direction.
By assumption, there are a horizontal saddle connection $s$ and a vertical saddle connection $s'$ that are two sides of an embedded triangle $\T$ in $M=M_0$.
Recall that $M$ is normalized so that the shortest horizontal saddle connection $s_0$, and the shortest vertical saddle connection $s'_0$ have the same length, say $\delta$.
We first have
$$
\delta^2 \leq |s||s'| =2 \Aa(\T) < 2.
$$
Thus $\delta < \sqrt{2}$.

For any $t\in \R$, the lengths of $s_0$  and $s'_0$ in $M_t$ are respectively $e^{t}\delta$ and $e^{-t}\delta$.
If $i<0$, then the length of $s_0$ in $M_i$ is smaller than $\sqrt{2}$.
It follows from Corollary~\ref{cor:cyls:width:bounded} that $\dist(k,k(i)) \leq \log_2(\sqrt{2}K^{-1}+1)+\kappa_0$.
Similarly, if $i>0$ then the length of $s'_0$ in $M_i$ is smaller than $\sqrt{2}$, thus $\dist(k',k(i)) \leq \log_2(\sqrt{2}K^{-1}+1)+\kappa_0$.
Therefore we have
$$
\gg^*(k,k') \subset \Nc(\{k,k'\},\log_2(\sqrt{2}K^{-1}+1)+\kappa_0).
$$
From Lemma~\ref{lm:gg:prop} (a), we get
$$
\diam(\gg(k,k')) \leq R_2
$$
with $R_2=2(R_1+\log_2(\sqrt{2}K^{-1}+1)+\kappa_0)+1$.
\end{proof}

\subsubsection{Slim triangle property}
Let $(k,k')$ be  a pair of directions in $\Cc$.
We use $\PSL(2,\R)$ to transform $k$ to the horizontal direction, $k'$ to the vertical direction, and such that the shortest horizontal and vertical saddle connections have the same length.

For any $t\in \R$, and $R\in (0,+\infty)$, let $\LL_t(k,k',R)\subset \Cc$ denote the set of directions of the cylinders whose circumference in $M_t$ is at most $R$.   Since $M_t$ always contains a cylinder of width bounded below by $K$ (hence its circumference is at most $K^{-1}$), for any $R> K^{-1}$, the set $\LL_t(k,k',R)$ is non-empty.
Define
\begin{equation}\label{eq:def:path:in:G:3}
\hat{\gg}^*(k,k')=\bigcup_{t\in\R} \LL_t(k,k',2K^{-1}) \subset {\GG'}^{(0)}.
\end{equation}
By construction, we have $\gg^*(k,k')\subset \hat{\gg}^*(k,k')$.

\begin{Lemma}\label{lm:enlarged:connect:path}
The set $\hat{\gg}^*(k,k')$ is finite, and there exists a constant $R_3$ such that
$$
\hat{\gg}^*(k,k') \subset \Nc(\gg^*(k,k'),R_3).
$$
\end{Lemma}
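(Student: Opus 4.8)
The plan is to unwind the definition of $\hat{\gg}^*(k,k')$ into a concrete condition on the core holonomy vectors of the relevant cylinders, derive finiteness from a compactness argument on directions in $\RP^1$, and then obtain the neighborhood inclusion by comparing, inside each $M_t$, a short cylinder with the wide cylinder $C^0_t$ via Corollary~\ref{cor:cyls:width:bounded}. As usual we normalize so that $k$ is the horizontal direction and $k'$ the vertical one. For a cylinder $C\subset M$ with core holonomy $(x,y)\in\R^2\setminus\{0\}$, its circumference in $M_t=a_tM$ is $\sqrt{e^{2t}x^2+e^{-2t}y^2}$, whose infimum over $t\in\R$ is $\sqrt{2|xy|}$. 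Hence the direction of $C$ belongs to $\hat{\gg}^*(k,k')$ if and only if $|xy|\le 2K^{-2}$ (the two directions $0$ and $\infty$, for which the infimum is $0$, always lying in $\hat{\gg}^*(k,k')$).

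For finiteness I would argue by contradiction: assume $\hat{\gg}^*(k,k')$ is infinite, pick infinitely many distinct directions $\ell_n$ in it with cylinders $C_n$, core holonomies $(x_n,y_n)$ satisfying $|x_ny_n|\le 2K^{-2}$, and pass to a subsequence with $\ell_n\to\ell_\infty\in\RP^1$. If $\ell_\infty\notin\{0,\infty\}$, then $|x_n/y_n|$ stays bounded away from $0$ and $\infty$, which together with $|x_ny_n|\le 2K^{-2}$ bounds $|(x_n,y_n)|=\mathrm{circ}(C_n)$; each $C_n$ then has a boundary saddle connection of length at most $\mathrm{circ}(C_n)$, and since there are only finitely many saddle connections of bounded length (as in Lemma~\ref{lm:finite:intersect}), there are only finitely many possible directions $\ell_n$, a contradiction. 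If $\ell_\infty=\infty$ (the case $\ell_\infty=0$ is symmetric, exchanging the two periodic directions $k$ and $k'$), we may assume $\ell_n\ne\infty$; since $k=\infty\in\Cc(M,\Sig)$, the horizontal direction is periodic, so $M$ decomposes into horizontal cylinders of widths bounded below by some $h>0$. The core geodesic of $C_n$ has direction $\ell_n\ne\infty$, hence is transverse to the horizontal foliation and crosses some horizontal cylinder; each complete crossing contributes at least $h$ to its vertical holonomy, with no cancellation since a geodesic transverse to the core direction traverses a cylinder monotonically, so $|y_n|\ge h$. Combined with $|x_ny_n|\le 2K^{-2}$ this forces $|x_n|$ bounded, and then $\ell_n=x_n/y_n\to\infty$ forces $|y_n|\to 0$, contradicting $|y_n|\ge h$. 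Thus $\hat{\gg}^*(k,k')$ is finite.

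For the neighborhood bound, fix $\ell\in\hat{\gg}^*(k,k')$ with a realizing cylinder $C$ and a value $t$ such that $\mathrm{circ}_{M_t}(a_tC)\le 2K^{-1}$. In $M_t$, a boundary saddle connection $s$ of $a_tC$ has length at most $2K^{-1}$ and lies in the direction of $a_tC$, while $C^0_t$ has width at least $K$; applying Corollary~\ref{cor:cyls:width:bounded} inside $M_t$ (and Lemma~\ref{lm:GG:GGb:equiv} to pass between $\GG$ and $\GG'$) bounds the distance between the directions of $a_tC$ and of $C^0_t$ by $\log_2(2K^{-2}+1)+\kappa_0$. Transporting through the affine graph isomorphism induced by $a_{-t}$, which sends these two directions to $\ell$ and $k(t)$ respectively, yields $\dist(\ell,k(t))\le\log_2(2K^{-2}+1)+\kappa_0$. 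By Lemma~\ref{lm:gg:prop}(b), $k(t)\in\Nc(\gg^*(k,k'),R_1)$, so $\dist(\ell,\gg^*(k,k'))\le R_1+\log_2(2K^{-2}+1)+\kappa_0=:R_3$, a constant depending only on the stratum.

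The main obstacle is the finiteness, and within it the delicate case is ruling out an infinite family of cylinder directions accumulating on one of the two periodic directions, where the circumferences are unbounded; the crossing argument above, which crucially uses that $k$ and $k'$ are themselves periodic directions, is the heart of the matter. Once that is in place, the neighborhood estimate is routine given Corollary~\ref{cor:cyls:width:bounded} and Lemma~\ref{lm:gg:prop}.
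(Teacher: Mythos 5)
Your proof is correct, and the second half (the neighborhood inclusion) essentially matches the paper: both argue via Corollary~\ref{cor:cyls:width:bounded} against the wide cylinder $C^0_t$; you work at an arbitrary real $t$ and invoke Lemma~\ref{lm:gg:prop}(b) to land in $\Nc(\gg^*,R_1)$, while the paper first moves $t$ to a nearby integer, obtaining a slightly different constant but the same conclusion. Where you genuinely diverge is the finiteness argument. The paper discretizes time: using $|c|_{t_1}/|c|_{t_2}\le e^{|t_1-t_2|}$ it shows $\hat{\gg}^*(k,k')\subset\bigcup_{i\in\Z}\LL_i(k,k',2eK^{-1})$, then observes that each $\LL_i(k,k',2eK^{-1})$ is finite and that for $|i|$ large it collapses to $\{0\}$ or $\{\infty\}$. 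You instead compute $\inf_t \mathrm{circ}_{M_t}(C)=\sqrt{2|xy|}$ to reformulate membership in $\hat{\gg}^*(k,k')$ as the holonomy inequality $|xy|\le 2K^{-2}$, and then run a compactness argument on $\RP^1$: away from $\{0,\infty\}$ the bounded ratio plus $|xy|\le 2K^{-2}$ bounds circumferences and invokes finiteness of short saddle connections, and at $\{0,\infty\}$ periodicity of $k,k'$ gives the lower bound $|y_n|\ge h$ (resp. $|x_n|\ge h$) which clashes with the holonomy inequality in the limit. Both routes are sound; the paper's discretization is leaner and recycles the surrounding lemmas, while your reformulation $|xy|\le 2K^{-2}$ is a clean, direct criterion for $\hat{\gg}^*$ and makes the role of the two periodic directions at the accumulation points explicit.
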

\begin{proof}
Observe that for any  regular geodesic $c$ in $M$,  and any $t_1,t_2 \in \R$, we have $|c|_{t_1}/|c|_{t_2} \leq e^{|t_1-t_2|}$. It follows that
$$
\hat{\gg}^*(k,k') \subset \bigcup_{i\in \Z}\LL_i(k,k',2eK^{-1}).
$$
For $i>0$ large enough, we have $\LL_i(k,k',2eK^{-1})=\{k'\}$, and $\LL_{-i}(k,k',2eK^{-1})=\{\k\}$. Since for any fixed $i$, the set of cylinders with circumference at most $2eK^{-1}$ on $M_i$ is finite, we draw that $\hat{\gg}^*(k,k')$ is a finite set.

Now, by Corollary~\ref{cor:cyls:width:bounded},  the direction of a cylinder with circumference at most $2eK^{-1}$ on $M_i$ is of distance at most $\log_2(2eK^{-2}+1)+\kappa_0$ from $k(i)$. Therefore
$$
\hat{\gg}^*(k,k') \subset \Nc(\gg^*(k,k'),R_3)
$$
with $R_3=\log_2(2eK^{-2}+1)+\kappa_0$.
\end{proof}

We now show
\begin{Lemma}~\label{lm:slim:tria:coarse}
There is a constant $R_4>0$ such that for any triple $(k,k',k'')$ of directions in $\Cc(M,\Sig)$, we have
$$
\hat{\gg}^{*}(k,k') \subset \Nc(\hat{\gg}^*(k,k'')\cup\hat{\gg}^*(k',k''),R_4).
$$
\end{Lemma}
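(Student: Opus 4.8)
The plan is to deduce the slim-triangle inequality from the $\delta$-hyperbolicity of $\Hh$ itself, and to transport it to $\GG'(M,\Sig)$ along the three Teichm\"uller geodesics joining $k,k',k''$. After applying an element of $\PSL(2,\R)$ we may assume $k$ is horizontal and $k'$ is vertical, so that $t\mapsto M_t=a_t\cdot M$ parametrizes the Teichm\"uller geodesic $\ell_{k,k'}$; in the upper half-plane model (identifying the $\mathrm{SO}(2)$-orbit of $M$ with the basepoint) this is exactly the geodesic with endpoints $k$ and $k'$. Parametrize $\ell_{k,k''}$ and $\ell_{k',k''}$ in the same way. Note that $\hat{\gg}^*(k,k')$ depends only on the geodesic $\ell_{k,k'}$, not on the representative chosen along it, so it is intrinsically attached to the unordered pair; if two of $k,k',k''$ coincide, two of the three sets in the statement coincide and the inclusion is trivial, so assume $k,k',k''$ are pairwise distinct.

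First I would fix $\kappa\in\hat{\gg}^*(k,k')$. By definition there are $t_0\in\R$ and a cylinder $C$ of $M$ in direction $\kappa$ with $\mathrm{circ}(C)_{M_{t_0}}\le 2K^{-1}$. Let $z_0\in\ell_{k,k'}$ be the point corresponding to $M_{t_0}$. Since $\ell_{k,k'},\ell_{k,k''},\ell_{k',k''}$ are the sides of an ideal triangle in $\Hh$ and $\Hh$ is $\delta_0$-hyperbolic for a universal $\delta_0$, the point $z_0$ lies within hyperbolic distance $\delta_0$ of $\ell_{k,k''}\cup\ell_{k',k''}$; after possibly interchanging $k$ and $k'$, assume $z_0$ is within $\delta_0$ of a point $w\in\ell_{k,k''}$. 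Let $N$ be the surface corresponding to $w$. Then $N$ sits on $\ell_{k,k''}$, say $N=M'_{s_0}$, and $N=g\cdot M_{t_0}$ for some $g\in\SL(2,\R)$ whose operator norm is bounded by a constant $c_0$ depending only on $\delta_0$ (in the Cartan decomposition of $g$ the diagonal exponent is controlled by $\delta_0$). Hence $\mathrm{circ}(C)_N\le c_0\,\mathrm{circ}(C)_{M_{t_0}}\le 2c_0K^{-1}=:\rho_0$, a constant depending only on the stratum.

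Next I would compare $\kappa$, on the surface $N$, with the direction of a thick cylinder there. By Theorem~\ref{thm:Smillie:Vorobets}, the area-one surface $N$ has a cylinder $C^0$ with $w(C^0)_N\ge K$; let $\lambda\in\Cc(M,\Sig)$ be its direction. Since $\mathrm{circ}(C^0)_N\le K^{-1}\le 2K^{-1}$, we have $\lambda\in\LL_{s_0}(k,k'',2K^{-1})\subset\hat{\gg}^*(k,k'')$. Now pick a saddle connection $\tau$ in the boundary of $C$: its direction is $\kappa$, and $|\tau|_N\le\mathrm{circ}(C)_N\le\rho_0$. Applying Corollary~\ref{cor:cyls:width:bounded} on $N$ to the cylinder $C^0$ and the saddle connection $\tau$, and using Lemma~\ref{lm:GG:GGb:equiv} together with the natural identification $\GG(N,\Sig_N)\cong\GG(M,\Sig)$ to read the distance in $\GG'(M,\Sig)$, we get
\[
\dist(\lambda,\kappa)\ \le\ \log_2\!\Big(\frac{|\tau|_N}{w(C^0)_N}+1\Big)+\kappa_0\ \le\ \log_2\!\big(\rho_0 K^{-1}+1\big)+\kappa_0 .
\]
Thus $\kappa\in\Nc\big(\hat{\gg}^*(k,k''),R_4\big)$ with $R_4:=\log_2\big(2c_0K^{-2}+1\big)+\kappa_0$, a constant depending only on the stratum of $(M,\Sig)$; since $\kappa$ was arbitrary this proves the lemma.

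I expect the delicate point to be the last estimate. One cannot directly bound the $\GG'$-distance between the directions of two cylinders, because the short cylinder $C$ may be arbitrarily thin and so its own width gives no control; the device is to feed Corollary~\ref{cor:cyls:width:bounded} the Smillie--Vorobets cylinder $C^0$ (width bounded below by $K$) as the denominator and a boundary saddle connection of $C$ (length at most $\rho_0$) as the numerator, the only pairing for which both quantities are controlled. The remaining work is bookkeeping: checking that the $\mathrm{SO}(2)$-ambiguities in passing between points of $\Hh$ and surfaces do not affect holonomy lengths, and that $\delta_0$, $c_0$, $K$ and $\kappa_0$ all depend only on the stratum so that $R_4$ does as well.
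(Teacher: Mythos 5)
Your proof is correct, and its core strategy coincides with the paper's: parametrize $\ell_{k,k'}$ by $M_t=a_t\cdot M$, realize $\kappa$ as the direction of a cylinder $C$ with short circumference at some point of $\ell_{k,k'}$, show that $C$ is still short at a nearby point of $\ell_{k,k''}$ or $\ell_{k',k''}$, and compare with a Smillie--Vorobets thick cylinder there via Corollary~\ref{cor:cyls:width:bounded}. Where you differ is in how you certify that the chosen point of $\ell_{k,k'}$ is uniformly close to one of the other two sides and that circumferences change by a bounded factor: you invoke the $\delta$-hyperbolicity of $\Hh$ (uniform thinness of ideal triangles) together with a Cartan-decomposition bound on the comparison element of $\PSL(2,\R)$, whereas the paper normalizes $(k,k',k'')=(\infty,0,1)$ and checks directly that $a_tUa_{-t}=\left(\begin{smallmatrix}1&-e^{2t}\\0&1\end{smallmatrix}\right)$ has uniformly bounded operator norm for $t\le 0$, so the circumference of $C$ at most doubles when passing from $M_t$ to $a_t\cdot U\cdot M$ (and symmetrically for $t\ge 0$). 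Your route is more conceptual and makes the hyperbolic geometry of the Teichm\"uller disk explicit; the paper's is more elementary and self-contained, produces the concrete constant $R_4=\log_2(4K^{-2}+1)+\kappa_0$ rather than one involving $e^{\delta_0/2}$, and sidesteps the mild bookkeeping (the $\mathrm{SO}(2)$-ambiguity in passing from points of $\Hh$ to surfaces, and the canonical identification of the graphs of periodic directions along a $\GL^+(2,\R)$-orbit) that you correctly flag at the end as points to be verified. Both arguments are sound; the key pairing you single out -- boundary saddle connection of the thin cylinder against the thick cylinder's width -- is exactly the one the paper uses.
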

\begin{proof}
We can  renormalize $M$ (using $\PSL(2,\R)$) such that $(k,k',k'')=(\infty,0,1)$.
Note that this normalization is not necessarily the normalization used to define the path $\gg(k,k')$ in \eqref{eq:def:path:in:G:2}.
In particular, $\gg^*(k,k')$ does not necessarily equal the set $\{k(i), \; i\in \Z\}$.
Nevertheless, we obtain the same subset $\hat{\gg}^*(k,k')$ by \eqref{eq:def:path:in:G:3}, that is $\hat{\gg}^*(k,k')=\cup_{t\in\R}\LL_t(k,k',2K^{-1})$.

Consider a direction $\hat{k}$ in $\hat{\gg}^*(k,k')$. By definition, $\hat{k}$ is the direction of a cylinder $C$ whose circumference in $M_t:=a_t\cdot M$ is at most $2K^{-1}$ for some $t \in \R$.

\medskip

\noindent {\em Claim:} if $t \leq 0$ then $\hat{k}$ is contained in the $(\log_2(4K^{-2}+1)+\kappa_0)$-neighborhood of $\hat{\gg}^*(k,k'')$.
\begin{proof}[Proof of the claim]
Let $M':= U\cdot M$, where $U=\left(\begin{smallmatrix} 1 & -1 \\ 0 & 1 \end{smallmatrix} \right)$. Note that $U(k)=k=\infty$ and $U(k'')=0$.
By definition,  $\hat{\gg}^*(k,k'')$ is the set of directions $\hat{k}' \in \Cc$ such that, for some $s \in \R$, the circumference of a cylinder in direction $\hat{k}'$ is at most $2K^{-1}$ in  $a_s\cdot M'$.

We claim that, for $s=t$, the circumference of $C$ in $a_t\cdot M'$ is at most $4K^{-1}$. To see this, we observe that
$$
M'_t:=a_t\cdot M'= \left(a_t\cdot U \cdot a_{-t}\right) \cdot M_t.
$$
Recall that the circumference of $C$ in $M_t$ is at most $2K^{-1}$. Since $a_t\cdot U \cdot a_{-t}=\left(\begin{smallmatrix} 1 & -e^{2t} \\ 0 & 1 \end{smallmatrix} \right)$, and $t \leq 0$, it follows that the circumference of $C$ in $M'_t$ is at most $4K^{-1}$.

Let $D^0_t$ be a cylinder of width bounded below by $K$ in $M'_t$ (whose existence is guaranteed by Theorem~\ref{thm:Smillie:Vorobets}). By definition, the direction of $D^0_t$ belongs to $\hat{\gg}^*(k,k'')$. From Corollary~\ref{cor:cyls:width:bounded}, it follows that the distance between the directions of $C$ and $D^0_t$ is at most $\log_2(4K^{-2}+1)+\kappa_0$. The claim is then proved.
\end{proof}

It follows immediately from the claim that $\LL_t(k,k',2K^{-1})$ is contained in the $R_4$-neighborhood of $\hat{\gg}^*(k,k'')$ if $t \leq 0$, with
$$
R_4=\log_2(4K^{-2}+1)+\kappa_0.
$$
By similar arguments, one can also show that $\LL_t(k,k',2K^{-1})$ is contained in the $R_4$-neighborhood of $\hat{\gg}^*(k',k'')$ if $t\geq 0$.
The lemma is then proved.
\end{proof}

\begin{Corollary}\label{cor:slim:tria}
Let $R_5=R_1+R_3+R_4$, where $R_1,R_3,R_4$ are the constants of Lemmas~\ref{lm:gg:prop},\ref{lm:enlarged:connect:path},\ref{lm:slim:tria:coarse} respectively.
Then for any triple $(k,k',k'')$ of directions in $\Cc$, we have
$$
\gg(k,k') \subset \Nc(\gg(k,k'')\cup\gg(k',k''),R_5).
$$
\end{Corollary}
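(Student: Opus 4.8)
The plan is to deduce Corollary~\ref{cor:slim:tria} by concatenating Lemmas~\ref{lm:gg:prop}, \ref{lm:enlarged:connect:path} and \ref{lm:slim:tria:coarse}, using only the elementary monotonicity of the neighborhood operation: if $\mathcal{A}\subset\Nc(\mathcal{B},r)$ and $\mathcal{B}\subset\Nc(\mathcal{C},s)$, then $\mathcal{A}\subset\Nc(\mathcal{C},r+s)$; and if $\mathcal{B}_1\subset\Nc(\mathcal{C}_1,s)$ and $\mathcal{B}_2\subset\Nc(\mathcal{C}_2,s)$, then $\mathcal{B}_1\cup\mathcal{B}_2\subset\Nc(\mathcal{C}_1\cup\mathcal{C}_2,s)$. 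I would also record the two trivial inclusions $\gg^*(k,k')\subset\hat{\gg}^*(k,k')$ (by the definition of $\hat{\gg}^*$) and $\gg^*(k,k'')\subset\gg(k,k'')$, the latter because each vertex $k(i)$ is an endpoint of the minimal path $\gamma_i$, hence lies on $\gg(k,k'')$.

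With these in hand, the argument is a single chain of inclusions. By Lemma~\ref{lm:gg:prop}(a), $\gg(k,k')\subset\Nc(\gg^*(k,k'),R_1)\subset\Nc(\hat{\gg}^*(k,k'),R_1)$. By Lemma~\ref{lm:slim:tria:coarse}, $\hat{\gg}^*(k,k')\subset\Nc\!\left(\hat{\gg}^*(k,k'')\cup\hat{\gg}^*(k',k''),R_4\right)$, so that $\gg(k,k')\subset\Nc\!\left(\hat{\gg}^*(k,k'')\cup\hat{\gg}^*(k',k''),R_1+R_4\right)$. Applying Lemma~\ref{lm:enlarged:connect:path} to the pairs $(k,k'')$ and $(k',k'')$ gives $\hat{\gg}^*(k,k'')\cup\hat{\gg}^*(k',k'')\subset\Nc\!\left(\gg^*(k,k'')\cup\gg^*(k',k''),R_3\right)$, and finally $\gg^*(k,k'')\cup\gg^*(k',k'')\subset\gg(k,k'')\cup\gg(k',k'')$. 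Stringing these together yields $\gg(k,k')\subset\Nc\!\left(\gg(k,k'')\cup\gg(k',k''),R_1+R_4+R_3\right)$, which is exactly the claim with $R_5=R_1+R_3+R_4$.

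The only point requiring attention — an observation rather than a genuine obstacle — is that the three subgraphs $\gg(k,k')$, $\gg(k,k'')$, $\gg(k',k'')$ are each built with respect to a different $\PSL(2,\R)$-normalization, so a priori the finite sets $\gg^*$ attached to distinct pairs are not directly comparable. This is harmless because Lemma~\ref{lm:slim:tria:coarse} is phrased entirely in terms of the enlarged sets $\hat{\gg}^*(\cdot,\cdot)=\bigcup_{t\in\R}\LL_t(\cdot,\cdot,2K^{-1})$, and such a set depends only on the pair of directions, not on the chosen normalization: two admissible normalizations differ by a diagonal matrix $a_s$, which merely reparametrizes the family $t\mapsto a_t\cdot M$ and hence does not change $\bigcup_{t\in\R}\LL_t$. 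Therefore the $\hat{\gg}^*$-sets produced by Lemmas~\ref{lm:gg:prop} and \ref{lm:enlarged:connect:path} coincide with those entering Lemma~\ref{lm:slim:tria:coarse}, and the chain of inclusions above is legitimate. No estimate beyond the three cited lemmas is needed.
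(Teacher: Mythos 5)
Your proof is correct and is essentially the paper's own argument: a chain of set inclusions obtained by concatenating Lemmas~\ref{lm:gg:prop}, \ref{lm:enlarged:connect:path} and \ref{lm:slim:tria:coarse} via the additivity of neighborhood radii. The only cosmetic difference is that you invoke Lemma~\ref{lm:gg:prop}(a) at the start of the chain while the paper applies it at the end; your closing remark on the normalization-independence of $\hat{\gg}^*(\cdot,\cdot)$ is a sound observation that the paper itself records inside the proof of Lemma~\ref{lm:slim:tria:coarse}.
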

\begin{proof}
 It follows from Lemma~\ref{lm:slim:tria:coarse} that we have
 $$
 \hat{\gg}^{*}(k,k')\subset \Nc(\hat{\gg}^*(k,k'')\cup\hat{\gg}^*(k',k''),R_4).
 $$
 Since $\gg^*(k,k') \subset \hat{\gg}^*(k,k')$, Lemma~\ref{lm:enlarged:connect:path} implies
 $$
 \gg^*(k,k') \subset \Nc(\gg^*(k,k'')\cup\gg^*(k',k''),R_3+R_4) \subset \Nc(\gg(k,k'')\cup\gg(k',k''),R_3+R_4).
 $$
 Finally, from Lemma~\ref{lm:gg:prop}, we get
 $$
 \gg(k,k') \subset \Nc(\gg(k,k'')\cup\gg(k',k''),R_1+R_3+R_4).
 $$
\end{proof}

\subsubsection{Proof of Proposition~\ref{prop:G:hyperbolic}}
\begin{proof}
By Theorem~\ref{thm:hyp:crit:Ma-Sc}, Lemma~\ref{lm:loc:prop:G} and  Corollary~\ref{cor:slim:tria} imply that $\GG'$ is Gromov hyperbolic. Since $\GG'$ and $\GG$ are quasi-isometric (c.f. Lemma~\ref{lm:GG:GGb:equiv}), this shows that $\GG$ is Gromov hyperbolic.
\end{proof}

\subsection{Proof of Theorem~\ref{thm:G:per:dir:prop}}
\begin{proof}
 The first part of Theorem~\ref{thm:G:per:dir:prop} follows from Propositions~\ref{prop:G:dist:inters},\ref{prop:inf:diam},\ref{prop:G:hyperbolic}.
 By Lemma~\ref{lm:Gam:act:freely}, we know that $\G$ acts freely on the set of edges of $\GG$.

 Assume now that $\G$ is a lattice in $\PSL(2,\R)$, then $\ol{\GG}$ is a finite graph by Proposition~\ref{prop:Vee:quot:finite}.
 Conversely, if $\ol{\GG}$ is a finite graph then in particular $\ol{\Ic}$ is a finite set.
 Thus, $\G$ has finite covolume by \eqref{eq:lattice:vol:bound}, which means that $(M,\Sig)$ is a Veech surface.
\end{proof}


\appendix
\section{Quotient of the periodic direction graph and generating sets of the Veech group}\label{sec:fund:dom:n:gen:sets}
Throughout this section, we will suppose that $(M,\Sig)$ is a half-translation Veech surface and that $\Aa(M)=1$.
Our goal is to provide an algorithm to determine the graph $\ol{\GG}=\GG/\G$.
In particular, we get a representative families of $\Ic/\G$, hence an upper bound on the volume of the Teichm\"uller curve $\Hh/\G$ by \eqref{eq:area:bound}.
As a by product, we obtain an algorithm to find a generating set of the Veech group $\G$.
To lighten the notation, we will omit $(M,\Sig)$ from the notation of the objects constructed from the pair $(M,\Sig)$.

\subsection{Reference domain for a periodic direction.}\label{sec:dom:of:cusp}
Assume that $(M,\Sig)$ is a horizontally periodic,  we then say that $(M,\Sig)$ is {\em normalized} if the shortest horizontal saddle connection of $M$ has length equal to $1$.

Let $k$ be a periodic direction in $\Cc$. There is an element  $A \in \PSL(2,\R)$, determined up to the left action of $\{U_t=\left(\begin{smallmatrix} 1 & t \\ 0 & 1 \end{smallmatrix}\right), t \in \R \}$, such that $A(k)=\infty$, and $(M',\Sig'):=A\cdot(M,\Sig)$ is normalized.
Let $\G'$ denote the Veech group of $(M',\Sig')$.
Note that we have $\G'=A\cdot \G \cdot A^{-1}$.

There exists $a\in \R_{>0}$ such that the stabilizer $\Stab_{\G'}(\infty)$ of $\infty$ in $\G'$ equals $\{\left(\begin{smallmatrix} 1 & \Z a \\ 0 & 1 \end{smallmatrix} \right)\}$.
We will call $a$ the {\em period} of the direction $k$.
Note that  $a$ stays unchanged if we replace $A$ by $U_t\cdot A$.

Let $\Ic^*(M',\Sig',\infty)$ denote the set of hyperbolic ideal triangles $\Delta \in \Ic(M',\Sig')$ such that

\begin{itemize}
\item $\infty$ is a vertex of $\Delta$, and


\item $\Delta$ intersects the vertical strip $(0,a)\times\R_+$.
\end{itemize}
\begin{Lemma}\label{lm:tria:in:ref:domain}
Let $\kappa$ denote the length of the longest horizontal saddle connection in $(M',\Sig')$. Let $\Delta$ be an ideal triangle in $\Ic^*(M',\Sig',\infty)$,
and $\T \in \Tria(M',\Sig')$ an embedded triangle which gives rise to $\Delta$. Denote the sides of $\T$ by $s_0,s_1,s_2$, where $s_0$ is a
horizontal saddle connection. Then
$$
\min\{|s_1|,|s_2|\} \leq \max\{2\sqrt{1+a^2},\sqrt{4+\kappa^2/4} \}.
$$
\end{Lemma}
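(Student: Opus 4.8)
The plan is to exploit the geometry of the ideal triangle $\Delta$ inside the vertical strip $S_a := (0,a)\times\R_+$, together with the area bound $\Aa(\T)\leq 1$ coming from the fact that $\T$ is embedded. Since $\infty$ is a vertex of $\Delta$, the two sides of $\Delta$ adjacent to $\infty$ are vertical half-lines, say over the real points $p_1$ and $p_2$ which are the slopes of $s_1$ and $s_2$ respectively. Because $\Delta$ intersects $S_a$, at least one of these vertical sides must pass through $\ol{S_a}$, i.e. at least one of $p_1,p_2$ lies in $[0,a]$ (or the whole horizontal opening $(p_1,p_2)$ straddles the strip). First I would use this to produce, after possibly translating by an element of $\Stab_{\G'}(\infty)=\{U_{na}\}$ (which moves $\Delta$ within $\Ic^*$ and does not change the flat lengths $|s_i|$), a normalization in which we control the horizontal coordinates of the periods of $s_1$ and $s_2$.

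Concretely, write the periods as $s_i \mapsto (x_i,y_i)$ with $y_i>0$ (orienting the sides so that the vertical components are positive), and $s_0\mapsto(x_0,0)$ with $1\leq |x_0|\leq\kappa$, where the lower bound is the normalization and the upper bound is the definition of $\kappa$. The relation $v_0+v_1+v_2=\vec 0$ (as in the discussion preceding Lemma~\ref{lm:GG:GGb:equiv}) gives $x_0+x_1+x_2=0$ and $y_1+y_2=0$ — wait, one orients consistently around $\partial\T$, so more care is needed: with the boundary orientation one gets $y_1=-y_2$, contradicting $y_i>0$, so instead I orient $s_1,s_2$ outward from the common vertex, giving $v_1-v_2 = \pm v_0$, hence $y_1=y_2=:y$ and $x_1-x_2=\pm x_0$. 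The slopes are $k_i=x_i/y_i$, so $k_1-k_2 = \pm x_0/y$, and the horizontal opening of $\Delta$ at $\infty$ has width $|x_0|/y$. Since $\Delta$ meets the strip $S_a$ of width $a$, after a translation by a multiple of $a$ we may assume $k_1\in[-a,a]$ (say), i.e. $|x_1|\leq a y$; combined with $|x_1-x_2|=|x_0|\leq\kappa$ this also controls $x_2$. The area bound reads $\tfrac12|x_0|\,y = \tfrac12|x_0 y_1| = \Aa(\T)\leq 1$, wait $\Aa(\T)=\tfrac12|\det(v_0,v_1)| = \tfrac12|x_0 y_1 - 0| = \tfrac12|x_0|y \leq 1$, hence $y\leq 2/|x_0|\leq 2$. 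Then $|s_1|^2 = x_1^2+y^2 \leq a^2y^2 + y^2 = (1+a^2)y^2 \leq 4(1+a^2)$, so $|s_1|\leq 2\sqrt{1+a^2}$, and symmetrically for $s_2$; taking the minimum and being slightly generous about which side lands in which regime gives the stated bound involving $\sqrt{4+\kappa^2/4}$ as the fallback estimate when the opening straddles the strip rather than having an endpoint inside it.

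The main obstacle, and the step requiring the most care, is the case analysis for \emph{how} $\Delta$ meets the strip $S_a$: it may be that neither $k_1$ nor $k_2$ lies in $[0,a]$, but the interval $(k_1,k_2)$ contains $[0,a]$, in which case $|k_1-k_2|\geq a$ and one must instead bound things using $|x_0|/y\geq a$ together with $y\leq 2/|x_0|$ and $|x_0|\leq\kappa$, which forces $a\leq |x_0|/y \leq |x_0|^2/2 \leq \kappa^2/2$ and then controls $y$ from below as well; this is where the term $\sqrt{4+\kappa^2/4}$ enters, as the bound on $\min\{|s_1|,|s_2|\}$ for the shorter of the two near-vertical sides. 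I would handle this by treating separately (a) one endpoint of $\Delta$'s opening lies in $[0,a]$, giving the $2\sqrt{1+a^2}$ bound directly, and (b) the opening strictly contains the strip, where a more delicate estimate on the shorter side produces the $\sqrt{4+\kappa^2/4}$ bound. The translation invariance under $\{U_{na}\}\subset\G'$ is what makes the reduction to these two cases legitimate, and should be invoked explicitly at the start.
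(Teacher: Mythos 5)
Your plan is essentially the paper's own proof: orient the sides so $y_1=y_2=:y$, use the area bound $\Aa(\T)=\tfrac12|x_0|\,y\le 1$ together with $|x_0|\ge 1$ (normalization) to get $y<2$, and then split according to whether an endpoint of the opening $[k_1,k_2]$ lies in $(0,a)$ or whether $(0,a)\subset[k_1,k_2]$, yielding respectively $2\sqrt{1+a^2}$ and $\sqrt{4+\kappa^2/4}$.

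Two small points. The translation by $U_{na}$ is superfluous: $\Delta\in\Ic^*(M',\Sig',\infty)$ already means $\Delta$ meets $(0,a)\times\R_+$, so no further normalization is needed. And in your case (b) aside, the chain ``$a\le |x_0|/y\le |x_0|^2/2$'' has the second inequality reversed (since $y\le 2/|x_0|$ gives $|x_0|/y\ge |x_0|^2/2$); this side computation is in any case unnecessary --- once $k_1\le 0\le k_2$, the clean estimate is $\min\{-x_1,x_2\}\le (x_2-x_1)/2=|x_0|/2\le\kappa/2$, which together with $y<2$ gives $\min\{|s_1|,|s_2|\}\le\sqrt{4+\kappa^2/4}$ directly.
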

\begin{proof}
For $i=1,2$, let $x_i+\imath y_i \in \C$ be the period of $s_i$, and $k_i=\frac{x_i}{y_i}$.  We can always assume that $y_i>0$, and $k_1< k_2$. Note that we have $y_1=y_2$.
Since $\T$ is an embedded triangle $\Aa(\T)=\frac{1}{2}y_1|s_0| < 1$.  As $(M',\Sig')$ is normalized, $|s_0|\geq 1$, hence $y_1=y_2 < 2$.


By definition, $[k_1,k_2]$ intersects the interval $(0,a)$. We have two cases:
\begin{itemize}
  \item  If $(0,a) \not\subset [k_1,k_2]$, then at least one of the following holds:  $k_1\in (0,a)$ or $k_2\in (0,a)$. Assume that  $k_1 \in (0,a)$, then $0<x_1<ay_1 <2a$. It follows that $|s_1|<2\sqrt{1+a^2}$. By the same argument, if $k_2\in (0,a)$ then $|s_2| < 2\sqrt{1+a^2}$.

  \item If $(0,a) \subset [k_1,k_2]$ then  $k_1\leq 0 < a \leq k_2$. Note that in this case $|s_0|=x_2-x_1$. Since $x_1\leq 0 \leq x_2$, it follows $\min\{-x_1,x_2\} \leq \frac{|s_0|}{2} \leq \frac{\kappa}{2}$. Since $0<y_1=y_2<2$, we have $\min\{|s_1|,|s_2|\}\leq \sqrt{4+\kappa^2/4}$.
\end{itemize}
\end{proof}

\begin{Corollary}\label{cor:ref:tria:finite}
The set $\Ic^*(M',\Sig',\infty)$ is finite.
\end{Corollary}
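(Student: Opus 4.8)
The plan is to read off finiteness of $\Ic^*(M',\Sig',\infty)$ directly from the length estimate in Lemma~\ref{lm:tria:in:ref:domain}, combined with the standard finiteness fact already used in the proof of Lemma~\ref{lm:finite:intersect}: on a translation surface of finite area there are only finitely many saddle connections of length at most any prescribed constant.

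First I would note that since $(M',\Sig')$ is horizontally periodic it has only finitely many horizontal saddle connections, so the constant $\kappa$ (the length of the longest one) appearing in Lemma~\ref{lm:tria:in:ref:domain} is finite, and I set $L:=\max\{2\sqrt{1+a^2},\sqrt{4+\kappa^2/4}\}$. Given $\Delta\in\Ic^*(M',\Sig',\infty)$, by the definition of $\Ic(M',\Sig')$ there is an embedded triangle $\T\in\Tria(M',\Sig')$ giving rise to $\Delta$; since $\infty$ is a vertex of $\Delta$, one side of $\T$ has slope $\infty$, i.e.\ is horizontal, so we may name the sides $s_0,s_1,s_2$ with $s_0$ horizontal. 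Then $|s_0|\le\kappa$, and Lemma~\ref{lm:tria:in:ref:domain} gives $\min\{|s_1|,|s_2|\}\le L$; relabelling if necessary, assume $|s_1|\le L$.

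The key observation is then that the pair $(s_0,s_1)$ already determines the third slope $k_2$. Orienting the three sides of $\T$ consistently around its boundary, the associated period vectors $v_0,v_1,v_2$ satisfy $v_0+v_1+v_2=\overrightarrow{0}$, so $v_2=-(v_0+v_1)$; as each $v_i$ equals the period of $s_i$ up to sign, the period of $s_2$, and in particular $k_2$, takes only finitely many values once the periods of $s_0$ and $s_1$ are fixed. By the finiteness of saddle connections of bounded length there are finitely many choices for $s_0$ (length $\le\kappa$) and for $s_1$ (length $\le L$), hence finitely many possibilities for the vertex set $\{\infty,k_1,k_2\}$ of $\Delta$. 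Since an ideal triangle is determined by its vertices, $\Ic^*(M',\Sig',\infty)$ is finite.

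I do not expect a genuine obstacle here, as every step is immediate from Lemma~\ref{lm:tria:in:ref:domain} and the known finiteness statement; the only points needing minor care are the bookkeeping of signs in the relation $v_0+v_1+v_2=\overrightarrow{0}$ (which contributes only a bounded multiplicative factor to the count) and the harmless relabelling that puts the short side at position $s_1$ (which at worst doubles the count).
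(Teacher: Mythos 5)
Your argument is correct and follows essentially the same route as the paper: reduce to counting pairs $(s_0,s_1)$ of oriented saddle connections with $s_0$ horizontal and $|s_1|$ bounded by the constant from Lemma~\ref{lm:tria:in:ref:domain}, observe that two oriented sides determine the embedded triangle (equivalently, $v_0+v_1+v_2=\overrightarrow{0}$ fixes the third period), and invoke finiteness of saddle connections of bounded length. The paper phrases this a bit more compactly but the content is identical.
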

\begin{proof}
Remark that an embedded triangle is uniquely determined by two of its oriented sides (the sides of a triangle are naturally endowed with the induced orientation). Therefore the number of triangles in $\Ic^*(M',\Sig',\infty)$ is bounded by the number of pairs $(s,s')$ of oriented saddle connections, where $s$ is horizontal, and $s'$ is non-horizontal with length at most $\max\{2\sqrt{1+a^2}, \sqrt{4+\kappa^2/4}\}$.
Since the set of saddle connections of length bounded by a constant is finite, it follows that the set $\Ic^*(M',\Sig',\infty)$ is finite.
\end{proof}

\begin{Remark}
  Lemma~\ref{lm:tria:in:ref:domain} provides us with a criterion for the search of ideal triangles in $\Ic^*(M',\Sig',\infty)$, namely, we only need to look for embedded triangles bounded by a horizontal saddle connections, and a non-horizontal saddle connection of length at most $\max\{2\sqrt{1+a^2},\sqrt{4+\kappa^2/4}\}$.
\end{Remark}

Let $\DD^*(M',\Sig',\infty)$ denote the union of the ideal triangles in $\Ic^*(M',\Sig',\infty)$.

\begin{Lemma}\label{lm:ref:domain} \hfill
\begin{itemize}
  \item[(a)] The domain $\DD^*(M',\Sig',\infty)$ is  connected.

 \item[(b)] For any hyperbolic ideal triangle $\Delta$ in $\Ic(M',\Sig')$ that has $\infty$ as a vertex, the $\Stab_{\G'}(\infty)$-orbit of $\Delta$ intersects the set $\Ic^*(M',\Sig',\infty)$.
\end{itemize}

\end{Lemma}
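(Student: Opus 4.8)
I would prove the two items separately: (b) is a direct translation argument with the parabolic subgroup $\Stab_{\G'}(\infty)$, and (a) reduces, after one geometric estimate, to elementary convexity.

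\emph{Proof of (b).} Write $\Stab_{\G'}(\infty)=\langle U_a\rangle$ with $U_a=\left(\begin{smallmatrix}1&a\\0&1\end{smallmatrix}\right)$. If $\Delta\in\Ic(M',\Sig')$ has $\infty$ as a vertex, its finite vertices form a pair $p<q$ and, as a region of $\Hh$, $\Delta=\{p<\Re z<q\}\cap\{z\ \text{above the semicircle on}\ (p,q)\}$. Choose $n\in\Z$ with $p'=p+na\in[0,a)$. Then $U_a^n(\Delta)$ lies in $\Ic(M',\Sig')$ (since $U_a^n\in\G'$), still has $\infty$ as a vertex, and has finite vertices $p'\in[0,a)$ and $q'=q+na>p'$; since $p'<a$ and $p'<q'$ the open interval $(p',q')$ meets $(0,a)$, so for $\Im z$ large $U_a^n(\Delta)$ meets the strip $(0,a)\times\R_+$. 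Hence $U_a^n(\Delta)\in\Ic^*(M',\Sig',\infty)$. This is pure bookkeeping and should be short.

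For (a), the heart of the matter is the following claim: there is $r_0>0$, depending only on $(M',\Sig')$, such that every ideal triangle in $\Ic(M',\Sig')$ without $\infty$ as a vertex is contained in $\{\Im z<r_0\}$. I would prove this exactly as in Lemma~\ref{lm:finite:intersect}: such a triangle $\Delta_\T$ comes from an embedded triangle $\T$ with no horizontal side, so each of its three sides is a non-horizontal saddle connection; since $M'$ is horizontally periodic, each side crosses a horizontal cylinder and so has vertical period component at least $\beta:=\min\{\text{heights of horizontal cylinders of }M'\}>0$. Writing the side vectors as $(a_i,b_i)$ with $|b_i|\ge\beta$, any two of the slopes $a_i/b_i$ differ by $|(a_i,b_i)\wedge(a_j,b_j)|/(|b_i|\,|b_j|)=2\,\Aa(\T)/(|b_i|\,|b_j|)\le 2/\beta^{2}$, using $\Aa(\T)\le\Aa(M')=1$; so the three vertices of $\Delta_\T$ span a real interval of length at most $2/\beta^{2}$, and $\Delta_\T$, lying inside the semicircle on its two extreme vertices, satisfies $\Im z< r_0:=1/\beta^{2}$.

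Granting the claim, (a) follows formally. By Lemma~\ref{lm:ideal:tri:cover} every $z\in\Hh$ lies in the closure of some $\Delta\in\Ic(M',\Sig')$; if $\Im z>r_0$ the claim forces $\infty$ to be a vertex of $\Delta$, and if moreover $\Re z\in(0,a)$ then $\Delta$ meets $(0,a)\times\R_+$, so $\Delta\in\Ic^*(M',\Sig',\infty)$ and $z\in\DD^*(M',\Sig',\infty)$. Hence the truncated strip $T:=\{0<\Re z<a\}\cap\{\Im z>r_0\}$, an intersection of two hyperbolic half-planes with a horoball, is a convex, in particular connected, subset of $\DD^*(M',\Sig',\infty)$. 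Each $\Delta\in\Ic^*(M',\Sig',\infty)$ is convex, hence connected, and meets $T$: its base interval meets $(0,a)$, and over any point of that intersection the vertical ray eventually enters both $\Delta$ and $T$. Therefore $\DD^*(M',\Sig',\infty)$, being a union of connected sets each meeting the connected set $T\subseteq\DD^*(M',\Sig',\infty)$, is connected. The only real obstacle is the claim — equivalently, that the union of the ideal triangles of $\Ic(M',\Sig')$ having $\infty$ as a vertex contains a horoball at $\infty$ — and it is precisely there that the hypothesis on $(M',\Sig')$ (via ``every non-horizontal saddle connection crosses a horizontal cylinder'') is genuinely used.
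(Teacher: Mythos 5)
Your proof of (b) is essentially the same as the paper's: translate $\Delta$ by a power of the parabolic generator so that its smaller finite vertex lands in $[0,a)$, then check the translated triangle meets the strip. For (a), however, you take a genuinely different route. The paper argues directly: it shows the projection of $\DD^*(M',\Sig',\infty)$ to $\R$ contains $(0,a)$ by, for each $k_0\in(0,a)$, explicitly constructing an embedded triangle in a horizontal cylinder of $U_{-k_0}\cdot(M',\Sig')$ (the one over the longest horizontal saddle connection) and translating back; every triangle in $\Ic^*$ projects to an interval meeting $(0,a)$, so containing $(0,a)$ in the union of projections forces that union, and hence $\DD^*$, to be an interval and connected. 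You instead prove a uniform estimate --- every ideal triangle of $\Ic(M',\Sig')$ without $\infty$ as a vertex lies below the horocycle $\Im z= r_0$ with $r_0$ controlled by the minimal height $\beta$ of a horizontal cylinder (this is the same ``non-horizontal sides cross horizontal cylinders'' computation as in Lemma~\ref{lm:finite:intersect}, applied to all three sides) --- and then invoke the covering Lemma~\ref{lm:ideal:tri:cover} to conclude that the truncated strip $T=\{0<\Re z<a,\ \Im z>r_0\}$ is contained in $\DD^*$ and serves as a connected backbone that every triangle of $\Ic^*$ touches. Both arguments are correct. The paper's is more constructive and self-contained (it does not reuse Lemma~\ref{lm:ideal:tri:cover}, and it produces a specific triangle over each $k_0$, which is close in spirit to what Algorithm A later needs); yours is structurally cleaner and makes explicit the fact, implicit but unstated in the paper, that the union of the triangles of $\Ic(M',\Sig')$ with $\infty$ as a vertex contains an entire horoball at $\infty$ --- a statement worth recording. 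One cosmetic point: with $r_0=1/\beta^2$ the correct inequality in your claim is $\Im z\le r_0$ rather than $\Im z<r_0$ (the top of the bounding semicircle is attained), but this does not affect the argument since you only use the contrapositive for $\Im z>r_0$.
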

\begin{proof}\hfill
\begin{itemize}
\item[(a)] To show that $\DD^*(M',\Sig',\infty)$ is connected,  it  suffices to show that its projection $J$ to the real axis is connected, which means that $J$ is an interval.
By definition, the projection of any triangle in $\Ic^*(M',\Sig',\infty)$ to the real axis is an interval that intersects $(0,a)$.
Therefore, it is enough to show that $(0,a) \subset J$.

Let $k_0$ be any direction in $(0,a)$.
Consider the surface $U_{-k_0}\cdot (M',\Sig')$, where $U_{-k_0}=\left(\begin{smallmatrix} 1 & -k_0 \\ 0 & 1 \end{smallmatrix} \right)$.
Note that the action of $U_{-k_0}$ on $\R \subset \RP^1$ is the translation by $-k_0$.
Let $s$ be (one of) the longest horizontal saddle connection of $U_{-k_0}\cdot(M',\Sig')$.
This saddle connection is contained in the bottom border of a horizontal cylinder, say $C$.
There is a singularity in the top border of $C$ such that the downward vertical ray emanating from this singularity hits $s$ before exiting $C$.
Thus there is an embedded triangle in $C$ that contains $s$ as a side and the vertical segment above.
Let $s_1,s_2$ be the other sides of this triangle, and $k_1,k_2$ be the directions of $s_1$ and $s_2$ respectively.
We can assume that $k_1\leq 0 \leq k_2$.
Applying $U_{k_0}=\left(\begin{smallmatrix} 1 & k_0 \\ 0 & 1 \end{smallmatrix} \right)$, we get an embedded triangle in $(M',\Sig')$  which corresponds to the hyperbolic ideal triangle $\Delta$ with vertices $\infty,k_1+k_0,k_2+k_0$. Since $k_0 \in [k_1+k_0,k_2+k_0]\cap (0,a)$, we have $\Delta \in \Ic^*(M',\Sig',\infty)$. Thus $k_0\in [k_1,k_2] \subset J$, and we have $(0,a)\subset J$ as desired.

\medskip

\item[(b)] Let $\T \in \Tria(M',\Sig')$ be the embedded triangle corresponding to an ideal triangle $\Delta$ which has $\infty$ as a vertex.
Let $s_0,s_1,s_2$ denote the sides of $\T$, and $k_i\in \R\cup\{\infty\}$ the slope of $s_i$.
We can suppose that $k_0=\infty$ (that is $s_0$ is a horizontal saddle connection), and that $k_1 < k_2$.
Since the action of $\left(\begin{smallmatrix} 1 & a \\ 0 & 1 \end{smallmatrix} \right)$ on $\R$ is given by $x \mapsto x+a$, there exists $U\in \Stab_{\G'}(\infty)$ such that  $U(k_1) \in [0,a)$, which implies that $U(\Delta) \in \Ic^*(M',\Sig',\infty)$.
\end{itemize}
\end{proof}

We will call $\DD(k):=A^{-1}(\DD^*(M',\Sig',\infty))$ a {\em reference domain} for the direction $k$.
It follows from Lemma~\ref{lm:ref:domain} that $\DD(k)$ is a polygon in $\Hh$ with geodesic boundary, which is not necessarily convex.
Set $\Ic^*(k)=A^{-1}(\Ic^*(M',\Sig',\infty))$.
By definition,  $\Ic^*(k)$ is the set of ideal triangles in $\Tria(M,\Sig)$ that compose $\DD(k)$.
Let $\NN(k)$ denote the set
$$
\NN(k)=\{k' \in \Cc, \; k'\neq k, k' \text{ is a vertex of some triangle in } \Ic^*(k)\}.
$$
In other words, $\NN(k)$ is the set of vertices  of $\DD(k)$ in $\partial\Hh\setminus\{k\}$.
The following lemma is a reformulation of  Lemma~\ref{lm:ref:domain}.

\begin{Lemma}\label{lm:neigh:1}
Let $\Stab_\G(k)$ denote the stabilizer of $k$ in $\G$. We regard elements of $\Cc$ and $\Ic$ as vertices of $\GG$.
\begin{itemize}
 \item[(i)] For any $k'\in\NN(k)$, $\dist(k,k')=1$.

 \item[(ii)] If $\Delta\in \Ic$ such that $\dist(k,\Delta)=\frac{1}{2}$, then the domain $\DD(k)$ contains an ideal triangle in the $\Stab_\G(k)$-orbit of $\Delta$.

 \item[(iii)] If $k'\in \Cc$ such that $\dist(k,k')=1$, then $\NN(k)$ intersects the $\Stab_\G(k)$-orbit of $k'$.
\end{itemize}
\end{Lemma}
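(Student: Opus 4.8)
The plan is to deduce all three statements directly from Lemma~\ref{lm:ref:domain}, after unwinding the definitions of $\DD(k)$, $\Ic^*(k)$, and $\NN(k)$ in terms of the normalized surface $(M',\Sig')=A\cdot(M,\Sig)$. Throughout, one conjugates by $A$: since $A$ maps $k$ to $\infty$ and conjugates $\G$ to $\G'$, it carries $\Stab_\G(k)$ isomorphically onto $\Stab_{\G'}(\infty)$, carries $\Ic(M,\Sig)$ to $\Ic(M',\Sig')$ (this is the $\GL^+(2,\R)$-equivariance of the construction of ideal triangles, already used in Lemma~\ref{lm:finite:intersect}), and carries $\Ic^*(k)$ to $\Ic^*(M',\Sig',\infty)$ by definition. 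So it suffices to prove the three assertions for $(M',\Sig')$ with $k=\infty$, and then transport them back by $A^{-1}$.

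For (i): if $k'\in\NN(\infty)$ then by definition $k'$ is a vertex, other than $\infty$, of some ideal triangle $\Delta\in\Ic^*(M',\Sig',\infty)\subset\Ic(M',\Sig')$. Thus $\infty$ and $k'$ are both vertices of a single element of $\Ic(M',\Sig')$, which is exactly property (c) of Section~\ref{sec:graph:per:dir} characterizing $\dist(\infty,k')=1$ in $\GG$. Conjugating back, $\dist(k,k')=1$. For (ii): suppose $\Delta\in\Ic(M',\Sig')$ with $\dist(\infty,\Delta)=\tfrac12$ in $\GG(M',\Sig')$; by construction of $\GG$ this means $\infty$ is a vertex of $\Delta$. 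Apply Lemma~\ref{lm:ref:domain}(b): the $\Stab_{\G'}(\infty)$-orbit of $\Delta$ meets $\Ic^*(M',\Sig',\infty)$, i.e. some $U(\Delta)$ with $U\in\Stab_{\G'}(\infty)$ lies in $\Ic^*(M',\Sig',\infty)$, and this is one of the ideal triangles composing $\DD^*(M',\Sig',\infty)$. Hence $\DD(k)=A^{-1}(\DD^*(M',\Sig',\infty))$ contains the ideal triangle $A^{-1}U A\cdot A^{-1}\Delta$ in the $\Stab_\G(k)$-orbit of the original triangle. For (iii): if $k'\in\Cc(M',\Sig')$ with $\dist(\infty,k')=1$, then by property (c) there is an ideal triangle $\Delta\in\Ic(M',\Sig')$ having both $\infty$ and $k'$ as vertices. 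By (ii) (its $\GG(M',\Sig')$ incarnation), some $U(\Delta)$, $U\in\Stab_{\G'}(\infty)$, lies in $\Ic^*(M',\Sig',\infty)$; since $U$ fixes $\infty$, the triangle $U(\Delta)$ has vertices $\infty$ and $U(k')$, so $U(k')$ is a vertex of a triangle in $\Ic^*(M',\Sig',\infty)$ distinct from $\infty$, i.e. $U(k')\in\NN(\infty)$. Thus $\NN(\infty)$ meets the $\Stab_{\G'}(\infty)$-orbit of $k'$, and conjugating by $A^{-1}$ gives the claim for $k$.

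The only genuine content is the invocation of Lemma~\ref{lm:ref:domain}(b) in parts (ii) and (iii); everything else is bookkeeping with the conjugation by $A$ and the definition of $\GG$ and of the objects $\DD(k)$, $\Ic^*(k)$, $\NN(k)$. So I do not expect a real obstacle here; the one point to be careful about is that $A$ is only determined up to the left action of the unipotent subgroup $\{U_t\}$, but since $U_t$ fixes $\infty$ and normalizes neither matters: $\Ic^*(M',\Sig',\infty)$ and the statements above are already known (from the discussion preceding the lemma) to be independent of that choice. One should also recall, for (ii), that $\Stab_\G(k)$ acts by automorphisms of $\GG$ preserving $k$ and $\Ic$, so "the $\Stab_\G(k)$-orbit of $\Delta$" is well-defined as a set of vertices of $\GG$.
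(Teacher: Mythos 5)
Your proof is correct and follows the paper's (implicit) argument exactly: the paper gives no separate proof, merely noting that Lemma~\ref{lm:neigh:1} is a reformulation of Lemma~\ref{lm:ref:domain}, and your write-up simply unpacks that reformulation carefully, conjugating by $A$ and invoking Lemma~\ref{lm:ref:domain}(b) for parts (ii) and (iii). Nothing to add.
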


\subsection{Algorithm A: enumerating elements of $\ol{\Cc}$ and $\ol{\Ic}$}\label{sec:quot:graph:diam}
We first describe an algorithm to enumerate elements of $\ol{\Cc}$ and $\ol{\Ic}$.
By Theorem~\ref{thm:area:TC:bound}, this algorithm provides us with a bound for $\Aa(\Hh/\G)$.
In what follows two elements of $\Cc$ (resp. $\Lc, \Ic$) are said to be {\em equivalent} if they belong to the same $\G$-orbit.

\begin{Remark}\label{rmk:equiv:dir}
To determine if two periodic directions $k$ and $k'$ are equivalent, one can proceed as follows: choose two matrices $A,A'$ such that $A(k)=A'(k')=\infty$, and the surfaces $A\cdot(M,\Sig)$ and $A'\cdot(M,\Sig)$ are normalized. Then $k$ and $k'$ are equivalent if and only if, up to the action of $\{U_t, \, t \in \R\}$ and Dehn twists in the horizontal cylinders, $A\cdot(M,\Sig)$ and $A'\cdot(M,\Sig)$ are represented by the same polygon in the plane.
\end{Remark}

\subsection*{Initialization:} Using the action of $\PSL(2,\R)$, we can assume that $M$ is horizontally periodic and normalized.
Set $\CC^0_0=\{\infty\}$.
Let $\CC_0^1$ be  a subset of $\NN(\infty)$ that satisfies
\begin{enumerate}
 \item[(a)] no element of $\CC_0^1$ is equivalent to $\infty$,

 \item[(b)] every element of $\NN(\infty)$ is equivalent to an element of $\CC_0^1$ or $\infty$,

 \item[(c)] no pair of elements of $\CC_0^1$ are  equivalent.
\end{enumerate}
The algorithm consists of exploring the graph $\GG$ from the vertex representing $\infty$ until we get a representative for every element of $\Cc/\G$.

\subsection*{Iteration:} Suppose now that we have two finite subsets $\CC_n^0$ and $\CC^1_n$ of $\Cc$ satisfying the following
\begin{enumerate}
  \item $\CC_n^0$ and $\CC_n^1$ are disjoint,
  \item no pair of directions in $\CC_n^0\sqcup \CC^1_n$ are equivalent,
\end{enumerate}
Assume that $\CC_n^1\neq \vide$. 
Set
$$
\hat{\NN}_{n+1}=\bigcup_{k \in \CC_n^1}\NN(k) \subset \Cc.
$$
Pick a subset $\hat{\NN}'_{n+1}$  of $\hat{\NN}_{n+1}$ such that
\begin{enumerate}
\item[(a)] no element of $\hat{\NN}'_{n+1}$ is equivalent to an element of $\CC_n^0\sqcup \CC_n^1$,
\item[(b)] every element of $\hat{\NN}_{n+1}$ is either equivalent to an element of $\CC_n^0\sqcup \CC_n^1$, or to a unique element of $\hat{\NN}'_{n+1}$.
\end{enumerate}
We now set
$$
\CC_{n+1}^0=\CC_n^0\sqcup\CC_n^1, \quad \CC_{n+1}^1:=\hat{\NN}'_{n+1}.
$$
Remark that we have $\CC^0_{n+1}=\{\infty\}\sqcup\CC^1_0\sqcup\dots\sqcup \CC^1_n$.
The algorithm stops when $\CC_n^1=\vide$.

\medskip

Consider now the graph $\ol{\GG}$. By definition, $\ol{\GG}$ has two types of vertices, let us denote by $\Vc$ the set of vertices of $\ol{\GG}$ representing the $\G$-orbits in $\Cc$, and by $\Wc$ the set of vertices representing the $\G$-orbits in $\Ic$.
Recall that by construction, every edge of $\ol{\GG}$ connects a vertex in $\Vc$ and a vertex in $\Wc$.
We denote by $\ol{\dist}$ the distance in $\ol{\GG}$.
Recall that Proposition~\ref{prop:G:dist:inters} and Proposition~\ref{prop:Vee:quot:finite} imply that $\ol{\GG}$ is a finite connected graph.

Let $v_\infty$ denote the vertex of $\ol{\GG}$ representing the $\G$-orbit of $\infty$.

\begin{Lemma}\label{lm:algo:A:dist:quot:G}
 Let $v$ be a vertex in $\Vc$. Then  $\ol{\dist}(v_\infty,v)=n$ if and only if $v$ represents the $\G$-orbit of a direction in $\CC^1_{n-1}$.
\end{Lemma}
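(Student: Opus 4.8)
The plan is to argue by induction on $n$, viewing the algorithm as a breadth-first exploration of $\ol{\GG}$ starting from $v_\infty$ and using Lemma~\ref{lm:neigh:1} to control which vertices of $\Vc$ appear at each stage. Two preliminary observations streamline the bookkeeping. First, since $\G$ acts on $\GG$ by automorphisms, the quotient map $p\colon \GG \to \ol{\GG}$ has the path-lifting property: once a lift of the initial vertex of an edge-path in $\ol{\GG}$ is fixed, the whole path lifts to $\GG$. Moreover $\GG$ (hence $\ol{\GG}$) is bipartite, with no edge inside $\Cc$ or inside $\Ic$, so $\ol{\dist}(v_\infty, v)$ is a positive integer for $v \in \Vc\setminus\{v_\infty\}$ and every geodesic from $v_\infty$ to $v$ has the form $v_\infty = u_0, w_0, u_1, \dots, w_{m-1}, u_m = v$ with $u_i \in \Vc$, $w_i \in \Wc$, each initial subpath being again a geodesic. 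Second, with the convention $\CC^1_{-1} := \{\infty\}$, the Initialization and Iteration steps together with Lemma~\ref{lm:dom:connect} give: the sets $\{\infty\}, \CC^1_0, \CC^1_1, \dots$ contain no two $\G$-equivalent directions; each element of $\CC^1_n$ lies in $\NN(k'')$ for some $k'' \in \CC^1_{n-1}$; and every element of $\hat{\NN}_n = \bigcup_{k'' \in \CC^1_{n-1}}\NN(k'')$ is $\G$-equivalent either to an element of $\{\infty\}\sqcup\CC^1_0\sqcup\dots\sqcup\CC^1_{n-1}$ or to an element of $\CC^1_n$.

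I would then prove, by induction on $n \geq 0$, the equivalence: $v$ represents the $\G$-orbit of a direction in $\CC^1_{n-1}$ if and only if $\ol{\dist}(v_\infty,v) = n$ (the case $n=0$ being the triviality $v = v_\infty$). For the forward implication at level $n+1$, pick $k \in \CC^1_n$ and $k'' \in \CC^1_{n-1}$ with $k \in \NN(k'')$. By Lemma~\ref{lm:neigh:1}(i), $\dist(k'',k) = 1$ in $\GG$, so $\ol{\dist}(v_\infty, p(k)) \leq \ol{\dist}(v_\infty, p(k'')) + 1 = n+1$ by the inductive hypothesis; and if $\ol{\dist}(v_\infty, p(k))$ were some $m \leq n$, the inductive hypothesis would force $p(k)$ to be the orbit of $\infty$ (if $m=0$) or of an element of $\CC^1_{m-1}$ (if $m\geq 1$), contradicting the $\G$-inequivalence of the layers since $k \in \CC^1_n$. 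Hence $\ol{\dist}(v_\infty, p(k)) = n+1$. For the converse at level $n+1$, let $\ol{\dist}(v_\infty, v) = n+1$ and fix a geodesic $v_\infty = u_0, w_0, \dots, w_n, u_{n+1} = v$; then $\ol{\dist}(v_\infty, u_n) = n$, so by induction $u_n = p(k'')$ for some $k'' \in \CC^1_{n-1}$. Lifting the final segment $u_n, w_n, u_{n+1}$ starting at $k''$ yields vertices $k'', \Delta, k$ in $\GG$ with $\Delta \in \Ic$ and $p(k) = v$; since $p(k) = v \neq u_n = p(k'')$ we have $k \neq k''$, and because $\GG$ has no edge inside $\Cc$ this forces $\dist(k'',k) = 1$. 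By Lemma~\ref{lm:neigh:1}(iii) there is $k_* \in \NN(k'')\subset\hat{\NN}_n$ lying in the $\Stab_{\G}(k'')$-orbit of $k$, so in particular $p(k_*) = v$. By the completeness property recalled above, $k_*$ is $\G$-equivalent either to an element of $\{\infty\}\sqcup\CC^1_0\sqcup\dots\sqcup\CC^1_{n-1}$ — which, by the already-proved forward direction, would give $\ol{\dist}(v_\infty,v) \leq n$, a contradiction — or to an element of $\CC^1_n$, which is the desired conclusion.

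The genuinely delicate point, and the one I expect to be the main obstacle, is organizing the converse step so that the chosen lift of the final geodesic segment starts at the preselected representative $k'' \in \CC^1_{n-1}$ of $u_n$, and then invoking Lemma~\ref{lm:neigh:1}(iii) correctly to pull the lifted endpoint $k$ back into $\NN(k'')$ up to the $\Stab_{\G}(k'')$-action; everything else reduces to the $\G$-inequivalence of the layers $\CC^1_i$ and the completeness of the family $\hat{\NN}_n$, both of which are explicitly built into the algorithm. The remaining verifications — that $p$ lifts edge-paths and that $\GG$ is bipartite with integer distances between $\Vc$-vertices — are immediate from the definition of $\GG$.
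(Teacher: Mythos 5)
Your proof is correct and proceeds along essentially the same lines as the paper's: simultaneous induction on the level, appealing to Lemma~\ref{lm:neigh:1}, the pairwise inequivalence of the layers $\CC^1_i$, and the covering property of the sets $\hat{\NN}_n$. The only difference is one of care: the paper's inductive step simply asserts that $v$ represents the orbit of some $k \in \hat{\NN}_\ell$ when $\ol{\dist}(v_\infty,v)=\ell+1$, whereas you make the implicit lifting explicit (lifting the final edge of a geodesic in $\ol{\GG}$ to $\GG$ based at the chosen representative $k''$, then invoking Lemma~\ref{lm:neigh:1}(iii)). This is the same argument, more fully spelled out.
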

\begin{proof}
It is clear from the construction that if  $v$ represents a direction in $\CC^1_0$ then $\ol{\dist}(v_\infty,v)=1$.
Conversely, if $\ol{\dist}(v_\infty,v)=1$ then $v$ represents the $\G$-orbit of a direction $k\in \Cc$ such that $\dist(\infty,k)=1$.
By Lemma~\ref{lm:neigh:1},  $k$ is equivalent to an element of $\NN(\infty)$.
Since $k$ is not equivalent to $\infty$, we can choose $k$ to be an element of $\CC^1_0$.

 Assume now that the lemma is true for $n\leq\ell$, and that  $\ol{\dist}(v_\infty,v)=\ell+1$. There exists $v'\in \Vc$ such that $\ol{\dist}(v_\infty,v')=\ell$ and $\ol{\dist}(v',v)=1$.
 By assumption, $v'$ represents the $\G$-orbit of a direction in $\CC^1_{\ell-1}$. Therefore, $v$ represents the $\G$-orbit of a direction  $k\in \hat{\NN}_{\ell}$.
 Note that $k$ cannot be equivalent to a direction in $\CC^0_{\ell-1}\sqcup\CC^1_{\ell-1}$, since otherwise we would have $\ol{\dist}(v_\infty,v)\leq \ell-1$ by the induction hypothesis.
 Thus $k$ must be equivalent to a direction in $\hat{\NN}'_{\ell}=\CC^1_\ell$. The lemma is then proved.
\end{proof}

As a direct consequence of Lemma~\ref{lm:algo:A:dist:quot:G}, we get
\begin{Proposition}\label{prop:algo:1:finite:steps}
Let $\dd_1$ be the maximal distance in $\ol{\GG}$ from $v_\infty$ to another vertex in $\Vc$.
Then the algorithm stops after $\dd_1$ iterations.
\end{Proposition}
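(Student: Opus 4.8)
The plan is to read off the number of iterations from the distance function on $\ol{\GG}$ via Lemma~\ref{lm:algo:A:dist:quot:G}, and then to establish that $\hat{\DD}_{\dd_1}$ covers $\Hh$ modulo $\G$ using the local property of reference domains in Lemma~\ref{lm:neigh:1}(ii).

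First I would check termination. Since $(M,\Sig)$ is a Veech surface, $\ol{\GG}$ is a finite connected graph by Proposition~\ref{prop:Vee:quot:finite}, so $\dd_1<\infty$; moreover, as every edge of $\ol{\GG}$ joins a vertex of $\Vc$ to a vertex of $\Wc$, the graph is bipartite and the vertices of $\Vc$ sit at integer distances from $v_\infty$, with $\dd_1$ itself an integer. Walking along a geodesic from $v_\infty$ to a vertex of $\Vc$ realizing $\dd_1$, one meets a vertex of $\Vc$ at each distance $0,1,\dots,\dd_1$, and there is none at distance $\dd_1+1$. By Lemma~\ref{lm:algo:A:dist:quot:G} this says exactly that $\CC^1_m\neq\vide$ for $0\le m\le\dd_1-1$ while $\CC^1_{\dd_1}=\vide$, so the iteration is performed for $m=0,\dots,\dd_1-1$ and stops just after producing $\hat{\DD}_{\dd_1}$, i.e. after exactly $\dd_1$ iterations.

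Next I would observe that $\CC^0_{\dd_1}=\{\infty\}\sqcup\CC^1_0\sqcup\cdots\sqcup\CC^1_{\dd_1-1}$ (Lemma~\ref{lm:dom:connect}) is a complete set of representatives for $\Cc/\G$: pairwise non-equivalence of its elements is the invariant maintained by the algorithm, and every $\G$-orbit in $\Cc$ occurs because the corresponding vertex of $\Vc$ lies at some distance $n\le\dd_1$ from $v_\infty$, hence (Lemma~\ref{lm:algo:A:dist:quot:G}) is the class of $\infty$ if $n=0$ or of an element of $\CC^1_{n-1}\subset\CC^0_{\dd_1}$ if $n\ge1$. Since $\hat{\DD}_{\dd_1}=\bigcup_{k\in\CC^0_{\dd_1}}\DD(k)$ is a finite union of ideal triangles, it has finite hyperbolic area. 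To prove $\G\cdot\hat{\DD}_{\dd_1}=\Hh$, take $z\in\Hh$; by Lemma~\ref{lm:ideal:tri:cover} the ideal triangles of $\Ic(M,\Sig)$ cover $\Hh$, so $z$ lies in some $\Delta\in\Ic(M,\Sig)$. Pick a vertex $k$ of $\Delta$ and $g\in\G$ with $g(k)=k_0\in\CC^0_{\dd_1}$; then $g(\Delta)$ has $k_0$ as a vertex, i.e. $\dist(k_0,g(\Delta))=\frac{1}{2}$, so Lemma~\ref{lm:neigh:1}(ii) provides $h\in\Stab_\G(k_0)$ with $h(g(\Delta))\in\Ic^*(k_0)$, whence $hg(\Delta)\subset\DD(k_0)\subset\hat{\DD}_{\dd_1}$ and so $hg(z)\in\hat{\DD}_{\dd_1}$. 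Hence $z\in\G\cdot\hat{\DD}_{\dd_1}$. Finally, a set of finite area whose $\G$-translates cover $\Hh$ contains a fundamental domain of the discrete group $\G$: given any fundamental domain $D$, the set $\hat{\DD}_{\dd_1}\cap\bigcup_{\gamma\in\G}\gamma^{-1}D$ lies in $\hat{\DD}_{\dd_1}$, meets every $\G$-orbit, and meets each orbit only once up to a null set.

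The hard part will be the covering step $\G\cdot\hat{\DD}_{\dd_1}=\Hh$: the point is that replacing a single vertex of the triangle containing $z$ by its representative in $\CC^0_{\dd_1}$ already suffices, and this rests precisely on Lemma~\ref{lm:neigh:1}(ii), which says that $\DD(k_0)$ contains, modulo $\Stab_\G(k_0)$, every ideal triangle incident to $k_0$. One should also be careful in the last step to claim only a measurable fundamental domain sitting inside $\hat{\DD}_{\dd_1}$ (consistent with the term ``coarse''), rather than a convex or polygonal one.
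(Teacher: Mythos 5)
Your proposal is correct and follows essentially the same route as the paper: Lemma~\ref{lm:algo:A:dist:quot:G} gives the termination count, and Lemma~\ref{lm:neigh:1}(ii) (through the observation that $\CC^0_{\dd_1}$ is a full set of representatives for $\Cc/\G$) gives that $\hat\DD_{\dd_1}$ meets every $\G$-orbit of ideal triangles, hence its translates cover $\Hh$. You are slightly more explicit than the paper on two points the paper leaves tacit — that $\CC^1_m\neq\vide$ for $m<\dd_1$ (your bipartite-graph argument), and the final passage from ``finite area, translates cover $\Hh$'' to ``contains a measurable fundamental domain'' — but the underlying idea is the same.
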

\begin{proof}
It follows from Lemma~\ref{lm:algo:A:dist:quot:G} that $\CC^1_{\dd_1}=\vide$, thus the algorithm stops after $\dd_1$ iterations.
\end{proof}
Proposition~\ref{prop:algo:1:finite:steps} means that the number of iterations that have been performed when the algorithm stops equals to largest distance from $v_\infty$ to another vertex in $\Vc\subset \ol{\GG}$.
By construction, we have a bijection between $\CC^0_{\dd_1}$ and $\Vc$. Thus this algorithm allows us to get the complete list of elements of $\Vc$.
Recall that for any $k\in \Cc$, $\Ic^*(k)$ is the set of ideal triangles that form a reference domain $\DD(k)$ of $k$.
Set
$$
\Ic^*:=\bigcup_{k\in \CC^0_{\dd_1}}\Ic^*(k)
$$
It follows from Lemma~\ref{lm:ref:domain} that every $\G$-orbit in $\Ic$ has at least a representative in $\Ic^*$. Therefore, we can extract from $\Ic^*$ a subset $\Ic_0^*$ which is in bijection with $\ol{\Ic}\simeq \Wc$. By Theorem~\ref{thm:area:TC:bound}, the cardinality of $\Ic^*_0$ provides us with a bound for $\Aa(\Hh/\G)$.
Note also that the domain $\DD_0^*:=\cup_{k\in \Ic^*_0}\DD(k)$ has finite area and contains a fundamental domain for the action of $\G$ on $\Hh$.
Therefore, we can view $\DD^*_0$ as a ``coarse'' fundamental domain of $\G$.

\subsection{Algorithm B: finding a generating set of $\Gamma$}\label{sec:gen:set:Veech:gp}
We now present an algorithm to obtain a generating set of $\G$. In the literature, generating sets of a lattice in $\PSL(2,\R)$ are often obtained from a fundamental domain of the lattice. In this algorithm, we obtain a generating set of $\G$ without constructing explicitly a fundamental domain. In what follows we will use the same notation as in Section~\ref{sec:quot:graph:diam}.


\subsection*{Initialization:}
Let $g_\infty$ be a generator of the stabilizer of $\infty$ in $\G$.
Let $\Ic^*(\infty)$ be the set of ideal triangles in $\Ic$ which compose a reference domain $\DD(\infty)$ of  $\infty$.
We set
$$
\CC_0:=\{\infty\}, \quad \Jb_0:=\Ic^*(\infty), \quad \FF_0:=\{g_\infty\}.
$$
\subsection*{Iteration:} assume now that we have a finite subset $\CC_n$ of $\Cc$, and to every element of $\CC_n$ we have an associated finite subset $\Ic^*(k)$ of $\Ic$, and an element $g_k\in \G$ satisfying the followings: for any $k\in \CC_n$
\begin{itemize}
 \item[(i)] the elements of $\Ic^*(k)$ represent the ideal triangles which compose a reference domain for $k$ (in particular $k$ is a vertex of every ideal triangle in $\Ic^*(k)$),

\item[(ii)] if $k$ is equivalent to $\infty$  then $g_k(\infty)=k$ and $\Ic^*(k)=g_k(\Ic^*(\infty))$, otherwise $g_k$ is a generator of $\Stab_{\G}(k)$.
\end{itemize}
We set
$$
\Jb_n=\bigcup_{k\in \CC_n}\Ic^*(k) \subset \Ic \text{ and } \FF_n=\{g_k, \; k\in \CC_n\} \subset \G.
$$
Define $\CC_{n+1}$ to be the subset of $\Cc$ consisting of the vertices  the ideal triangles in $\Jb_n$.
Note that $\CC_n$ is a subset of $\CC_{n+1}$. We will associate to each $k\in \CC_{n+1}$ a finite subset $\Ic^*(k)$ of $\Ic$, and an element $g_k$ of $\G$ as follows: if  $k \in \CC_n$, we keep the same $\Ic^*(k)$ and $g_k$ provided by the previous step.
Let $k$ be an element of $\CC_{n+1}\setminus\CC_{n}$.  By definition, $k$ is a vertex of a triangle $\Delta\in \Jb_n$. We have two cases
\begin{itemize}
\item[$\bullet$] \underline{Case 1:} $k$ is equivalent to $\infty$. In this case, there is an element $g\in \G$ such that $g(\infty)=k$ and $g^{-1}(\Delta) \in \Ic^*(\infty)$.  We define $\Ic^*(k)=g(\Ic^*(\infty))$, and $g_k=g$.

\item[$\bullet$] \underline{Case 2:} $k$ is not equivalent to $\infty$.
  In this case, we choose a reference domain $\DD(k)$ such that $\Delta$ is one of the ideal triangles that make up $\DD(k)$. We then define $\Ic^*(k)$ to be the family of triangles that compose $\DD(k)$, and  $g_k$ a generator of $\Stab_\G(k)$.
\end{itemize}
Clearly, $\CC_{n+1}$ and the mappings $k \mapsto \Ic^*(k)$, and $k\mapsto g_k$  satisfy the conditions (i) and (ii) above.

\begin{Lemma}\label{lm:algo:B:basic:prop}
For any $n \in \N$ and any $k \in \CC_n$, we have
\begin{itemize}
 \item[a)] The subgroup generated by $\FF_n$ contains the stabilizer of $k$ in $\G$.

 \item[b)] As subsets of $\GG$, $\CC_{n+1}$ is contained in the $1$-neighborhood of $\CC_n$.

 \item[c)] If $k \in \CC_{n+1}$, then the distance from $\infty$ to $k$  in $\GG$ is at most $n$.
\end{itemize}
\end{Lemma}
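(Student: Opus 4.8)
I would prove all three parts together by induction on $n$, following the recursive structure of the construction; the argument is essentially bookkeeping, so I do not expect a serious obstacle.

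For (a): in the base case $n=0$ one has $\FF_0=\{g_\infty\}$ and $\Stab_\G(\infty)=\langle g_\infty\rangle$ by the choice of $g_\infty$, so $\langle\FF_0\rangle$ contains $\Stab_\G(\infty)$. For the inductive step I would first note that $\CC_n\subseteq\CC_{n+1}$ and the elements $g_k$ attached to $k\in\CC_n$ are carried over unchanged, so $\FF_n\subseteq\FF_{n+1}$ and the claim is automatic for $k\in\CC_n$. If $k\in\CC_{n+1}\setminus\CC_n$ arises through Case~2 of the iteration, then $g_k\in\FF_{n+1}$ is by construction a generator of $\Stab_\G(k)$, hence $\Stab_\G(k)=\langle g_k\rangle\subseteq\langle\FF_{n+1}\rangle$. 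If $k$ arises through Case~1, then $k$ is equivalent to $\infty$, $g_k\in\FF_{n+1}$ satisfies $g_k(\infty)=k$, and — this is the one point to be careful about — $g_\infty\in\FF_0\subseteq\FF_{n+1}$, because $\infty$ remains in every $\CC_m$; therefore
\[
\Stab_\G(k)=g_k\,\Stab_\G(\infty)\,g_k^{-1}=g_k\langle g_\infty\rangle g_k^{-1}\subseteq\langle\FF_{n+1}\rangle ,
\]
which completes (a).

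For (b): by construction $\CC_{n+1}$ is exactly the set of endpoints on $\partial\Hh$ of the ideal triangles lying in $\Jb_n=\bigcup_{k\in\CC_n}\Ic^*(k)$, and every triangle in $\Ic^*(k)$ has $k$ among its vertices. So given $k'\in\CC_{n+1}$ there are $k\in\CC_n$ and an ideal triangle $\Delta\in\Ic(M,\Sig)$ with $k$ and $k'$ both vertices of $\Delta$. In $\GG$ the vertex $\Delta$ is joined to $k$ and to $k'$ by two edges of length $\tfrac12$, so $\dist(k',k)\le 1$, i.e.\ $k'$ lies in the $1$-neighbourhood of $\CC_n$. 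Part (c) then follows by iterating (b): since $\CC_0=\{\infty\}$, an immediate induction gives that every direction in $\CC_m$ lies at distance at most $m$ from $\infty$ in $\GG$, which is the asserted bound.

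The only delicate point, as noted, is Case~1 of part (a): one must use the full conjugate $g_k\langle g_\infty\rangle g_k^{-1}$ of the cusp stabilizer — not merely the element $g_k$ — and this is available in $\langle\FF_{n+1}\rangle$ precisely because $g_\infty$ is never dropped from $\FF_m$. The remaining steps are just a matter of unwinding the definitions of $\CC_{n+1}$, $\Jb_n$, $\FF_n$ and the length convention for edges of $\GG$.
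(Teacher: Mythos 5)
Your proof is correct and follows essentially the same route as the paper: for (a) the base case is the choice of $g_\infty$, and the inductive step is handled by conjugating $\langle g_\infty\rangle$ by $g_k$ when $k$ is $\G$-equivalent to $\infty$ (the paper states exactly that $g_k\, g_\infty\, g_k^{-1}$ generates $\Stab_\G(k)$); for (b) the argument factors through $\Jb_n$ just as the paper does (each set of $\Ic^*(k)$-triangles has $k$ as a vertex, so $\Jb_n$ sits in the $\tfrac12$-neighborhood of $\CC_n$ and $\CC_{n+1}$ in the $\tfrac12$-neighborhood of $\Jb_n$); and (c) is obtained by iterating (b) from $\CC_0=\{\infty\}$.

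One caution about part (c): iterating (b) gives $\dist(\infty,k)\le m$ for $k\in\CC_m$, i.e.\ $\dist(\infty,k)\le n+1$ for $k\in\CC_{n+1}$, whereas the lemma as printed claims $\le n$. You should not have written ``which is the asserted bound''; it is off by one. The paper's own proof of (c) is the single sentence that it is an immediate consequence of (b), so the paper carries the same off-by-one, which strongly suggests the lemma was intended to read either ``if $k\in\CC_n$'' or ``at most $n+1$.'' The discrepancy is harmless for the rest of the section, but when a stated bound does not follow from your argument you should flag the mismatch rather than assert agreement.
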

\begin{proof}
For a), we only need to consider the case $k$ is equivalent to $\infty$. But in this case, $g_k\cdot g_\infty\cdot g_k^{-1}$ is a generator of $\Stab_\G(k)$.
For b), observe that $\Jb_n$ is contained in the $\frac{1}{2}$-neighborhood of $\CC_n$, and $\CC_{n+1}$ is contained in the $\frac{1}{2}$-neighborhood of $\Jb_n$. Finally, c) is an immediate consequence of b).
\end{proof}

Let $\G_n$ denote the subgroup of $\G$ that is generated by the elements of $\FF_n$.

\begin{Lemma}\label{lem:gen:set:dist:n}
  Let $A$ be an element of $\G$ such that $\dist(\infty,A(\infty)) \leq n$, where $\dist$ is the distance on the graph $\GG$. Then $A \in \G_n$.
\end{Lemma}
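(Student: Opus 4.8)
The plan is to prove, by induction on $n$, the auxiliary statement
$(R_n)$: \emph{for every $k\in\Cc$ with $\dist(\infty,k)\le n$, there is $B\in\G_n$ with $B(k)\in\CC_n$.}
Granting $(R_n)$, the lemma follows at once: given $A\in\G$ with $\dist(\infty,A(\infty))\le n$, apply $(R_n)$ to $k=A(\infty)$ to obtain $B\in\G_n$ with $BA(\infty)\in\CC_n$. The direction $BA(\infty)$ lies in the $\G$-orbit of $\infty$, hence it is equivalent to $\infty$, so by property (ii) of the iteration of Algorithm B the associated element $g:=g_{BA(\infty)}\in\FF_n$ satisfies $g(\infty)=BA(\infty)$. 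Then $g^{-1}BA$ fixes $\infty$, so $g^{-1}BA\in\Stab_\G(\infty)=\langle g_\infty\rangle\subset\G_n$, and therefore $A=B^{-1}g\,(g^{-1}BA)\in\G_n$, as $\G_n$ is a group containing $B$ and $g$.

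For the base case $n=0$: $\dist(\infty,k)=0$ forces $k=\infty\in\CC_0$, so take $B=\id$. For the inductive step, assume $(R_{n-1})$ and let $k\in\Cc$ with $\dist(\infty,k)\le n$. If $\dist(\infty,k)\le n-1$ we are done by $(R_{n-1})$ together with $\G_{n-1}\subset\G_n$ and $\CC_{n-1}\subset\CC_n$; so assume $\dist(\infty,k)=n$ (distances between two $\Cc$-vertices are integers, since $\GG$ is bipartite with all edges of length $\tfrac12$). Choose a geodesic $\infty=c_0,t_1,c_1,\dots,t_n,c_n=k$ in $\GG$ with $c_i\in\Cc$, $t_i\in\Ic$. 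Put $k':=c_{n-1}$ and $\Delta:=t_n$; then $\dist(\infty,k')=n-1$ and $\dist(k',\Delta)=\dist(k,\Delta)=\tfrac12$. By $(R_{n-1})$ there is $B'\in\G_{n-1}$ with $\ell:=B'(k')\in\CC_{n-1}$. Since $B'\in\G$ acts on $\GG$ by automorphisms, $\dist(\ell,B'(\Delta))=\tfrac12$, so Lemma~\ref{lm:neigh:1}(ii) yields a triangle $\Delta''\in\Ic^*(\ell)$ (the triangles composing the reference domain $\DD(\ell)$) and an element $s\in\Stab_\G(\ell)$ with $\Delta''=s(B'(\Delta))$. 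Now $\Delta''\in\Ic^*(\ell)\subset\Jb_{n-1}$, hence every vertex of $\Delta''$ lies in $\CC_n$; in particular $s(B'(k))\in\CC_n$, since $B'(k)$ is a vertex of $B'(\Delta)$. Moreover $s\in\Stab_\G(\ell)$ with $\ell\in\CC_{n-1}$, so $s\in\langle\FF_{n-1}\rangle=\G_{n-1}$ by Lemma~\ref{lm:algo:B:basic:prop}(a); thus $B:=sB'\in\G_{n-1}\subset\G_n$ and $B(k)=s(B'(k))\in\CC_n$. This establishes $(R_n)$, and the induction is complete.

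The routine parts are the bipartiteness remark and the group-theoretic closure in the reduction; the only point requiring care is the index bookkeeping against the definitions in Algorithm B, which I would check explicitly: that $\Ic^*(\ell)$ and $g_\ell$ are already defined once $\ell\in\CC_{n-1}$, that $\Ic^*(\ell)\subset\Jb_{n-1}$, that $\CC_n$ is exactly the set of vertices of the triangles in $\Jb_{n-1}$, and that $\FF_{n-1}\subset\FF_n$ so that $\G_{n-1}\subset\G_n$. All of these are immediate from the construction, but they must be invoked in the correct order; the identity $\Stab_\G(\infty)=\langle g_\infty\rangle$ (the very definition of $g_\infty$) is what closes the final step. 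I expect no genuine obstacle beyond this bookkeeping.
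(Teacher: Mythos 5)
Your proof is correct and is essentially the same argument as the paper's. The paper also picks a minimal path $\infty=k_0,k_1,\dots,k_m=A(\infty)$ in $\GG$ and iteratively constructs elements $B_0\in\Stab_\G(k_0),B_1\in\Stab_\G(B_0k_1),\dots$ (each in $\G_i$ by Lemma~\ref{lm:algo:B:basic:prop}(a)) so that the pushed-forward path lands successively in $\CC_1,\CC_2,\dots$ via Lemma~\ref{lm:neigh:1}; it then closes the loop exactly as you do, using the fact that the final pushed-forward vertex is in $\CC_m$, equivalent to $\infty$, so that applying the corresponding $g_k^{-1}$ and an element of $\Stab_\G(\infty)=\langle g_\infty\rangle$ recovers $A$. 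Your repackaging of the iteration as the auxiliary statement $(R_n)$ is a clean way of organizing the same induction and, if anything, makes the bookkeeping more transparent; the one step you flag as needing care (that $\Ic^*(\ell)\subset\Jb_{n-1}$ and that $\CC_n$ is exactly the vertex set of $\Jb_{n-1}$) is indeed the crux, and you invoke it correctly.
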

\begin{proof}
Let $\alpha$ be a path of minimal length from $\infty$ to $A(\infty)$ in $\GG$. Let $m=\leng(\alpha) \leq n$. Then $\alpha$ must contain $m+1$ vertices in $\Cc$.
Let us label those vertices by $k_0,k_1,\dots,k_m$, where $k_0=\infty, k_m=A(\infty)$ and $\dist(\infty,k_i)=i$.

For any $k\in \CC_n$, define
$$
\NN(k)=\{k' \in \Cc, \, k'\neq k, k' \text{ is a vertex of some triangle in } \Ic^*(k)\}.
$$
Since $\dist(k_0,k_1)=1$, by Lemma~\ref{lm:neigh:1}, there is an element $B_0\in \Stab_{\G}(k_0)$ such that $k'_1:=B_0(k_1)\in \NN(k_0)$.
Note that $k'_1\in \CC_1$ and $B_0\in \G_0$.

Let $k'_2:=B_0(k_2)$. Since $\dist(k'_1,k'_2)=\dist(k_1,k_2)=1$, there is an element $B_1\in \Stab_{\G}(k'_1)$ such that $k''_2:=B_1(k'_2)=B_1\circ B_0(k_2) \in \NN(k'_1)$. In particular, we have $k''_2\in \CC_2$, and $B_1\in \G_1$ by Lemma~\ref{lm:algo:B:basic:prop}.

By induction, we can find a sequence $(B_0,B_1,\dots,B_{m-1})$ of elements of $\G$ such that $B_i \in \G_i$, and $B_{m-1}\circ\dots\circ B_0(k_m)=k^{(m)}_m \in \CC_{m}$.
Since $k_m$ is equivalent to $\infty$, by construction, there is an element $B_{m} \in \FF_{m}$ such that $B_{m}(k^{(m)}_m)=\infty$. Hence
$$
B_m\circ B_{m-1}\circ\dots\circ B_0\circ A(\infty)=\infty.
$$
which means that there exists $B\in \Stab_{\G}(\infty)=\G_0$ such that
$$
A =B_0^{-1}\circ\dots\circ B^{-1}_{m}\circ B \in \G_{m}.
$$
\end{proof}

Let $v_\infty$ be the vertex of $\ol{\GG}$ that represents the $\G$-orbit of $\infty$ in $\Cc$.
Recall that we have defined $\dd_1$ to be the maximal distance in $\ol{\GG}$ from $v_\infty$ to another vertex in $\Vc$ (that is the set of $\G$-orbits in $\Cc$). Note that $\dd_1$  can be computed by  Algorithm A (c.f. Proposition~\ref{prop:algo:1:finite:steps}).

\begin{Proposition}\label{prop:generate:Gam}
 We have $\G_{2d_1+1}=\G$.
\end{Proposition}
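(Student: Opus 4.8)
The plan is to reduce the statement to a single generation fact about the action of $\G$ on the graph $\GG=\GG(M,\Sig)$, and then to quote Lemma~\ref{lem:gen:set:dist:n}. Set $S:=\{\,g\in\G:\dist(\infty,g\cdot\infty)\le 2\dd_1+1\,\}$. Applying Lemma~\ref{lem:gen:set:dist:n} with $n=2\dd_1+1$ gives $S\subseteq\G_{2\dd_1+1}$, so it suffices to prove that $S$ generates $\G$; then $\G=\langle S\rangle\subseteq\G_{2\dd_1+1}\subseteq\G$. (Note that this uses no property of Algorithm~B beyond Lemma~\ref{lem:gen:set:dist:n}.)

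To see that $S$ generates $\G$ I would use the $\G$-action on $\GG$ together with the definition of $\dd_1$. First I record that the eccentricity of $v_\infty$ in $\ol\GG$ is at most $\dd_1+\tfrac12$: a vertex of $\ol\GG$ in $\Vc$ is at distance $\le\dd_1$ from $v_\infty$ by definition of $\dd_1$, and since every edge of $\ol\GG$ joins a vertex of $\Vc$ to a vertex of $\Wc$, every $w\in\Wc$ has a neighbour in $\Vc$ and hence lies within $\dd_1+\tfrac12$ of $v_\infty$. Let $p\colon\GG\to\ol\GG$ be the quotient map. Since $\G\subset\PSL(2,\R)$ preserves the partition of the vertices of $\GG$ into the two types ($\Cc$-vertices and $\Ic$-vertices), it acts without edge inversions, so a breadth-first spanning tree $T$ of the finite graph $\ol\GG$ rooted at $v_\infty$ can be lifted to a connected subgraph $D\subseteq\GG$ containing $\infty$ with $p|_D\colon D\to T$ an isomorphism. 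In particular $D$ is connected, $D^{(0)}$ maps onto $\GG^{(0)}/\G=\ol\GG^{(0)}$, and (the root eccentricity of a breadth-first tree equalling that of the ambient graph) every vertex of $D$ lies at distance at most $\dd_1+\tfrac12$ from $\infty$ in $\GG$.

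Now $D$ is a connected fundamental domain for the action, and by the standard argument $\G$ is generated by the vertex stabilisers $\Stab_\G(v)$, $v\in D^{(0)}$, together with all ``edge elements'' $g\in\G$ for which some edge of $\GG$ joins $D$ to $g\cdot D$: using connectedness of $\GG$ one checks that the subgroup $H$ generated by these elements satisfies $H\cdot D^{(0)}=\GG^{(0)}$ (otherwise, after translating an offending edge into $D$ by an element of $H$, one would find an edge of $\GG$ from $D$ to $g\cdot D$ with $g\notin H$, contrary to the choice of edge elements), and then for $g\in\G$ one writes $g\cdot\infty=h\cdot v$ with $h\in H$, $v\in D^{(0)}$, forces $v=\infty$ because $D^{(0)}\hookrightarrow\ol\GG^{(0)}$, and concludes $h^{-1}g\in\Stab_\G(\infty)\subseteq H$. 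Finally, each of these generators lies in $S$: if $\tau\in\Stab_\G(v)$ with $v\in D^{(0)}$ then, using $\tau v=v$, $\dist(\infty,\tau\cdot\infty)\le\dist(\infty,v)+\dist(v,\tau\cdot\infty)=2\dist(\infty,v)\le 2(\dd_1+\tfrac12)=2\dd_1+1$; and if $g$ is an edge element, realised by an edge from $v\in D$ to $g\cdot w$ with $w\in D$ (edges of $\GG$ have length $\tfrac12$), then $\dist(\infty,g\cdot\infty)\le\dist(\infty,v)+\tfrac12+\dist(w,\infty)\le 2\dd_1+\tfrac32$, which forces $\dist(\infty,g\cdot\infty)\le 2\dd_1+1$ since this number is an integer. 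Hence $\G=\langle S\rangle$, and the proof is complete.

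The crucial point — and the only place the precise constant $2\dd_1+1$ (rather than, say, $2\dd_1+2$) enters — is the combination of the eccentricity bound $\dd_1+\tfrac12$ for $v_\infty$ in $\ol\GG$ with the integrality of $\dist(\infty,g\cdot\infty)$; everything else is routine. The mild care needed is in setting up the lifted fundamental domain $D$ and the ``generation by vertex stabilisers and edge elements'' statement, because $\GG$ has two kinds of vertices and an $\Ic$-vertex may carry a stabiliser of order $3$; but such a stabiliser fixes a vertex of $D$, so its generator still lands in $S$ and no constant is lost. (A direct induction along Algorithm~B, pushing the ``landing step'' of a direction under $\G_{2\dd_1+1}$, is the natural alternative but suffers from an off-by-one because $\NN(k)\subset\CC_{n(k)+1}$ rather than $\CC_{n(k)}$; the fundamental-domain argument avoids this.)
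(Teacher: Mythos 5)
Your proof is correct, but it takes a genuinely different route from the paper's.

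The paper's argument is a direct induction on $m=\dist(\infty,A(\infty))$. The base case $m\le 2\dd_1+1$ is Lemma~\ref{lem:gen:set:dist:n}. For $m>2\dd_1+1$, one fixes a geodesic $\infty=k_0,\dots,k_m=A(\infty)$ in $\GG$, considers the intermediate vertex $k_{m-\dd_1-1}$, and uses the definition of $\dd_1$ to find $k$ in the $\G$-orbit of $\infty$ with $\dist(k_{m-\dd_1-1},k)\le\dd_1$. Choosing $A'\in\G$ with $A'(\infty)=k$, the triangle inequality gives $\dist(\infty,A'(\infty))\le m-1$ and $\dist(\infty,A'^{-1}A(\infty))=\dist(k,k_m)\le 2\dd_1+1$; the inductive hypothesis handles $A'$ and the base case handles $A'^{-1}A$, whence $A=A'(A'^{-1}A)\in\G_{2\dd_1+1}$. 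This uses nothing beyond the triangle inequality and Lemma~\ref{lem:gen:set:dist:n}.

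You instead reduce to showing that $S=\{g:\dist(\infty,g\cdot\infty)\le 2\dd_1+1\}$ generates $\G$, and prove this by lifting a breadth-first spanning tree of $\ol\GG$ to a transversal $D\subseteq\GG$ (the bipartite structure rules out edge inversions, so the lift exists), invoking the standard generation-by-fundamental-domain statement (vertex stabilizers plus edge elements generate, since $H\cdot D^{(0)}=\GG^{(0)}$ by connectedness and the transversal property then forces $H=\G$), and then bounding the displacement $\dist(\infty,g\cdot\infty)$ for each such generator via the eccentricity bound $\dd_1+\tfrac12$ for $v_\infty$ in $\ol\GG$. Both proofs pivot on the same Lemma~\ref{lem:gen:set:dist:n}; yours is more structural and makes explicit the role of $\dd_1$ as (essentially) the eccentricity of the base cusp in the quotient graph, at the cost of importing the Bass--Serre-style generation machinery, while the paper's is shorter and entirely self-contained. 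Two small notes: in your edge-element estimate you do not actually need integrality, since the bipartite structure forces $\dist(\infty,v)+\dist(\infty,w)\le 2\dd_1+\tfrac12$ directly; and your closing remark that ``a direct induction suffers from an off-by-one'' does not apply to the paper's induction, which runs on $\dist(\infty,A(\infty))$ rather than on the landing step of Algorithm~B and has no such issue.
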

\begin{proof}
Let $A$ be an element of $\G$. Let $m:=\dist(\infty,A(\infty))$.  We will prove that $A \in \G_{2d_1+1}$ by induction on $m$.
For $m\leq 2d_1+1$, this follows from Lemma~\ref{lem:gen:set:dist:n}. Thus let us suppose that $m>2d_1+1$, and that the statement is true for any $A$ such that $\dist(\infty,A(\infty))< m$.

Let $\alpha$ be any path of minimal length in $\GG$ from $\infty$ to $A(\infty)$. This path contains $m+1$ vertices in $\Cc$ that are labeled by $k_0,\dots,k_m$, where $k_0=\infty, k_m=A(\infty)$, and $\dist(k_0,k_i)=i$. Consider the vertex $k_{m-d_1-1}$. Since the vertex of $\ol{\GG}$ that represents the $\G$-orbit of $k_{m-d_1-1}$ is of distance at most $d_1$ from $v_\infty$, there is a vertex $k\in \Cc$ in the  $\G$-orbit of $\infty$ such that $\dist(k_{m-d_1-1},k) \leq d_1$. Consequently, $\dist(\infty,k) \leq (m-d_1-1)+d_1=m-1$, and $\dist(k,k_m)\leq d_1+d_1+1=2d_1+1$.

By assumption, there is an element $A'\in \G$ such that $A'(\infty)=k$. By the induction hypothesis, $A'\in \G_{2d_1+1}$. Consider $k':={A'}^{-1}(A(\infty))$.
Now, since
$$
\dist(\infty,k')=\dist(k,A(\infty))\leq 2d_1+1
$$
the matrix ${A'}^{-1}\cdot A$ belongs to $\G_{2d_1+1}$ by Lemma~\ref{lem:gen:set:dist:n}. Thus $A\in \G_{2d_1+1}$, and the proposition is proved.
\end{proof}

Proposition~\ref{prop:generate:Gam} implies that we obtain a generating set for the Veech group of $(M,\Sig)$ after $2d_1+1$ iterations of Algorithm B.



\begin{thebibliography}{ABC9}

\bibitem{Bow:thesis} J.~Bowman: Flat structures and Complex structures in Teichm\"uller theory, Ph.D. Thesis, Cornell University.

\bibitem{Bow10} J.~Bowman: Teichm\"uller geodesics, Delaunay triangulations, and Veech groups, in {\em Teichm\"uller Theory and Moduli Problem}, Ramanujan Math. Soc. Lect. Notes Ser. 10 (2010).

\bibitem{Cal04} K. Calta: Veech surfaces and complete periodicity in genus two, {\em J. Amer. Math. Soc.}, {\bf 17}  (2004), no. 4, 871-908.

\bibitem{CHM08} Y.~Cheung, P.~Hubert, and H.~Masur: Topological dichotomy and strict ergodicity for translation surfaces, {\em Erg. Th. Dyn. Syst.} {\bf 28} (2008), no.6, pp.~1729--1748.

\bibitem{FLP79} A. Fathi, F. Laudenbach, V. Poenaru {\em et al.}: Travaux de Thurston sur les surfaces, {\em Ast\'erique} {\bf 66-67} (1979).

\bibitem{GHS03} E.~Gutkin, P.~Hubert, and T.~Schmidt: Affine diffeomorphisms of translation surfaces: periodic points, Fuchsian groups, and arithmeticity, {\em Ann. Sci. E.N.S.} {\bf 36} (2003), no. 6 , p.~847--866.

\bibitem{GJ96} E.~Gutkin and C.~Judge: The geometry and arithmetic of translation surfaces with applications to polygonal billiards, {\em Math. Res. Lett.} {\bf 3} (1996), pp.~391--403

\bibitem{GJ00} E.~Gutkin and C.~Judge: Affine mappings of translation surfaces: geometry and arithmetics, {\em Duke Math. J.} {\bf 103} (2000) no. 2, pp.~191--213.

\bibitem{Ham06} U. Hamenst\"adt: Train tracks and the Gromov boundary of the complex of curves. {\em Spaces of Kleinian groups}, 187-207, London Math. Soc. Lecture Note Ser., {\bf 329} (2006).

\bibitem{Ham07} U. Hamenst\"adt: Geometry of the  complex of curves and of Teichm\"uller space. {\em Handbook of Teichm\"uller theory, Vol I}, 447-467, IRMA Lect. Math. Theor. Phys., {\bf 11} (2007).

\bibitem{Ham10} U. Hamenst\"adt: Stability of quasi-geodesics in Teichm\"uller space, {\em Geom. Dedicata} {\bf 146} (2010), pp.~101--116.

\bibitem{HPW15} S. Hensel, P.~Przytycki, R.~Webb: 1-slim triangles and uniform hyperbolicity for arc graphs and curve graphs, {\em J. Eur. Math. Soc.} {\bf 17} (2015), pp.~755--762.

\bibitem{HubLan06} P.~Hubert and E.~Lanneau: Veech groups without parabolic elements, {\em Duke Math. J.} {\bf 133} (2006) no. 2, 335--346.

\bibitem{HuLe1} P.~Hubert and S.~Leli\`evre: Prime Arithmetic Teichm\"uller discs in $\mathcal{H}(2)$, {\em Israel J. Math.} {\bf 151} (2006), pp.~281--321.

\bibitem{HS01} P.~Hubert and T.~Schmidt: Invariants of translation surfaces, {\em Ann. Instit. Fourier}, {\bf 51} (2001), no.2 , pp.~461--495.

\bibitem{HS:inf:Veech} P.~Hubert and T.~Schmidt: Infinitely generated Veech groups, {\em Duke Math. J.} {\bf 123} (2004), no. 1, pp.~49--69.

\bibitem{HS:intro:Veech} P.~Hubert and T.~Schmidt: An introduction to Veech surfaces, {\em Handbook of dynamical systems}, Vol. 1B, 501-526, Elsevier B. V., Amsterdam,  2006.

\bibitem{Lan04} E.~Lanneau: Parity of the spin structure defined by a quadratic differential, {\em Geomemtry $\&$ Topology} {\bf 8} (2004), pp.~511--538.

\bibitem{LanNg:cp} E.~Lanneau and D.-M.~Nguyen: {Complete periodicity of Prym eigenforms}, {\em Ann. Sci. E.N.S.} {\bf 49:1} (2016), pp.~87--130.

\bibitem{Leh17} R.~Lehnert: On critical exponent of infinitely generated Veech groups, {\em Math. Ann.} {\bf 368} (2017),  no.3-4, 1017--1058.

\bibitem{KS00} R.~Kenyon and J.~Smillie: Billiards on rational-angled triangles {\em Comment. Math.  Helv.} {\bf 75 } (2000) no.1, 65--108.

\bibitem{Kla99} E. Klarreich: The boundary at infinity of the curve complex and the relative Teichm\"uller space, {\em preprint} (1999).

\bibitem{MasMin99} H. Masur and Y. Minsky: Geometry of the curve complex I: Hyperbolicity, {\em Invent. Math.} {\bf 138} (1999), pp.~103-149.

\bibitem{MasSch13} H. Masur and S. Schleimer:  The geometry of the disk complex,  {\em J. Amer. Math. Soc.} {\bf 26} (2013) pp.~1--62.

\bibitem{MaTa02} H.~Masur and  S.~Tabachnikov: {Rational  billiards  and   flat  structures},  {\em Handbook  of dynamical  systems, 1A},  {North-Holland,  Amsterdam} (2002), pp.~1015--1089.

\bibitem{McM_hilbert}    C.~McMullen:  Billiards  and   {Teichm\"uller}    curves   on    Hilbert    modular   surfaces,  \emph{J.\ Amer.\ Math.\ Soc.} {\bf 16} (2003), no.~4,  pp.~857--885.

\bibitem{McM_flux} C.~McMullen: {Teichm\"uller}   geodesics  of  infinite complexity, \emph{Acta Math.}  {\bf 191} (2003), no.~2, pp.~191--223.

\bibitem{McM_spin} C.~McMullen: {{Teichm\"uller} curves in genus two:  Discriminant  and  spin}, {\em Math.  Annalen}  {\bf 333}  (2005),  pp.~87--130.

\bibitem{McM_prym} C.~McMullen: {Prym variety and Teichmuller curves}, {\em Duke Math. J.} {\bf 133}, 569-590 (2006).

\bibitem{Min95} Y.~Minsky: A geometric approach to the complex of curves on a surface, in {\em Topology and Teichm\"uller space (Katinkulta 1995)}, 149-158, World Sci. Publ., River Edge, NJ (1996).

\bibitem{Mo_invent06} M.~M\"oller: Periodic points on Veech surfaces and the Mordell-Weil group over a Teichmueller curve, {\em Invent. Math.} {\bf 165} No. 3 (2006), pp.~633-649.

\bibitem{Mosher:preprint} L.~Mosher: Traintrack expansions of measured foliations, {\em preprint}.

\bibitem{Mu13} R.~Mukamel: Fundamental domains and generators for lattice Veech groups, {\em Comment. Math. Helv.} {\bf 92} (2017), no. 1, pp.~57-83.

\bibitem{Ng:curv:gr:g2} D.-M. Nguyen: {Translation surfaces and the curve graph in genus two}, {\em Algebraic and Geometric Topology} {\bf 17}, no. 4 (2017), 2177-2237.

\bibitem{Rees81} M. Rees: An alternative approach to the ergodic theory of measured foliations on surfaces, {\em Erg. Th. and Dyn. Syst.} {\bf 1}, no. 4 (1981), pp.~461--488.

\bibitem{Sch04} G.~Schmith\"usen: An algorithm for finding the Veech group of an origami, {\em Experiment. Math.} {\bf 13} (2004), no.4, 459--472.

\bibitem{Smi00} J. Smillie: The dynamics of billiards flows in rational polygons, {\em Dynamical Systems, Ergodic Theory and Applications}, {\bf Encyclopaedia of Mathematical Sciences 100}, Mathematical Physics I, 360-382, {\em Springer Berlin} (2000).

\bibitem{SW08} J.~Smillie and B.~Weiss: Veech dichotomy and the lattice property, {\em Ergo. Th. Dyn. Syst. 28} (2008), no. 6, pp.~1959--1972.

\bibitem{SW10} J.~Smillie and B.~Weiss: Finiteness results for flat surfaces: large cusps and short geodesics, {\em Commet. Math. Helv. 85} (2010), no.2, 313--336.

\bibitem{SW:Veech} J.~Smillie and B.~Weiss: Characterizations of lattice surfaces, {\em Invent. Math.} {\bf 180} (2010), no. 3, p.~535--557.

\bibitem{Vee89} W.A.~Veech: Teichm\"uller curves in moduli space, Eisenstein series, and an application to triangular billiards, {\em Invent. Math.} {\bf 97} (1989), pp.~553--583.

\bibitem{Vee11} W.A.~Veech: Bicuspid F-structure and Hecke groups, {\em Proc. Lond. Math. Soc.} {\bf 103} (2011), no. 4, pp.~710--745.

\bibitem{Vo96} Y.~Vorobets: Plane structures and billiards in rational polygons: the Veech alternative, {\em Russ. Math. Surv.} {\bf 51} (1996), pp.~779--817.

\bibitem{Vo05} Ya. Vorobets: Periodic geodesics on generic translation surfaces, {\em `` Algebraic and Topological Dynamics''}, 205-258, {\bf Contemp. Math. 385}, {\em Amer. Math. Soc., Providence, RI} (2005).

\bibitem{WSch15} G.~Weitze-Schmith\"usen: Deficiency of being a congruence group of for Veech groups of origamis, {\em Int. Math. Res. Not. IMRN} {\bf 2015}, no.6, 1613--1637.

\bibitem{Zorich:survey}  A.~Zorich: {Flat surfaces},  {\em Frontiers   in  number  theory,   physics,  and  geometry}, Springer, Berlin (2006), pp.~437--583.


\end{thebibliography}
\end{document}